\newtheorem{theorem}{Theorem}[section]
\newtheorem{acknowledgement}[theorem]{Acknowledgement}
\newtheorem{axiom}[theorem]{Axiom}
\newtheorem{conjecture}[theorem]{Conjecture}
\newtheorem{corollary}[theorem]{Corollary}
\newtheorem{definition}[theorem]{Definition}
\newtheorem{example}[theorem]{Example}
\newtheorem{exercise}[theorem]{Exercise}
\newtheorem{lemma}[theorem]{Lemma}
\newtheorem{notation}[theorem]{Notation}
\newtheorem{proposition}[theorem]{Proposition}
\newtheorem{remark}[theorem]{Remark}
\let\pdfoutput=\undefined\fi
\chardef\@x10\chardef\@xv60
\def\tcitime{
\def\@time{%
  \@minute\time\@hour\@minute\divide\@hour\@xv
  \ifnum\@hour<\@x 0\fi\the\@hour:%
  \multiply\@hour\@xv\advance\@minute-\@hour
  \ifnum\@minute<\@x 0\fi\the\@minute
  }}%
\def\x@hyperref#1#2#3{%
   \catcode`\~ = 12
   \catcode`\$ = 12
   \catcode`\_ = 12
   \catcode`\# = 12
   \catcode`\& = 12
   \catcode`\% = 12
   \y@hyperref{#1}{#2}{#3}%
}
\def\y@hyperref#1#2#3#4{%
   #2\ref{#4}#3
   \catcode`\~ = 13
   \catcode`\$ = 3
   \catcode`\_ = 8
   \catcode`\# = 6
   \catcode`\& = 4
   \catcode`\% = 14
}
\def\QCTOpt[#1]#2{%
  \def\QCTOptB{#1}
  \def\QCTOptA{#2}
}
\def\QCTNOpt#1{%
  \def\QCTOptA{#1}
  \let\QCTOptB\empty
}
\def\Qct{%
  \@ifnextchar[{%
    \QCTOpt}{\QCTNOpt}
}
\def\QCBOpt[#1]#2{%
  \def\QCBOptB{#1}%
  \def\QCBOptA{#2}%
}
\def\QCBNOpt#1{%
  \def\QCBOptA{#1}%
  \let\QCBOptB\empty
}
\def\Qcb{%
  \@ifnextchar[{%
    \QCBOpt}{\QCBNOpt}%
}
\def\PrepCapArgs{%
  \ifx\QCBOptA\empty
    \ifx\QCTOptA\empty
      {}%
    \else
      \ifx\QCTOptB\empty
        {\QCTOptA}%
      \else
        [\QCTOptB]{\QCTOptA}%
      \fi
    \fi
  \else
    \ifx\QCBOptA\empty
      {}%
    \else
      \ifx\QCBOptB\empty
        {\QCBOptA}%
      \else
        [\QCBOptB]{\QCBOptA}%
      \fi
    \fi
  \fi
}
\def\GRAPHICSPS#1{%
 \ifcase\GRAPHICSTYPE
   \special{ps: #1}%
 \or
   \special{language "PS", include "#1"}%
 \fi
}%
\def\graffile#1#2#3#4{%
    \bgroup
	   \@inlabelfalse
       \leavevmode
       \@ifundefined{bbl@deactivate}{\def~{\string~}}{\activesoff}%
        \raise -#4 \BOXTHEFRAME{%
           \hbox to #2{\raise #3\hbox to #2{\null #1\hfil}}}%
    \egroup
}%
\def\draftbox#1#2#3#4{%
 \leavevmode\raise -#4 \hbox{%
  \frame{\rlap{\protect\tiny #1}\hbox to #2%
   {\vrule height#3 width\z@ depth\z@\hfil}%
  }%
 }%
}%
\let\nographics=\@msidraft
\newif\ifwasdraft
\def\GRAPHIC#1#2#3#4#5{%
   \ifnum\@msidraft=\@ne\draftbox{#2}{#3}{#4}{#5}%
   \else\graffile{#1}{#3}{#4}{#5}%
   \fi
}
\def\addtoLaTeXparams#1{%
    \edef\LaTeXparams{\LaTeXparams #1}}%
\newif\ifBoxFrame \BoxFramefalse
\newif\ifOverFrame \OverFramefalse
\newif\ifUnderFrame \UnderFramefalse
\def\BOXTHEFRAME#1{%
   \hbox{%
      \ifBoxFrame
         \frame{#1}%
      \else
         {#1}%
      \fi
   }%
}
\def\doFRAMEparams#1{\BoxFramefalse\OverFramefalse\UnderFramefalse\readFRAMEparams#1\end}%
\def\readFRAMEparams#1{%
 \ifx#1\end%
  \let\next=\relax
  \else
  \ifx#1i\dispkind=\z@\fi
  \ifx#1d\dispkind=\@ne\fi
  \ifx#1f\dispkind=\tw@\fi
  \ifx#1t\addtoLaTeXparams{t}\fi
  \ifx#1b\addtoLaTeXparams{b}\fi
  \ifx#1p\addtoLaTeXparams{p}\fi
  \ifx#1h\addtoLaTeXparams{h}\fi
  \ifx#1X\BoxFrametrue\fi
  \ifx#1O\OverFrametrue\fi
  \ifx#1U\UnderFrametrue\fi
  \ifx#1w
    \ifnum\@msidraft=1\wasdrafttrue\else\wasdraftfalse\fi
    \@msidraft=\@ne
  \fi
  \let\next=\readFRAMEparams
  \fi
 \next
 }%
\def\IFRAME#1#2#3#4#5#6{%
      \bgroup
      \let\QCTOptA\empty
      \let\QCTOptB\empty
      \let\QCBOptA\empty
      \let\QCBOptB\empty
      #6%
      \parindent=0pt
      \leftskip=0pt
      \rightskip=0pt
      \setbox0=\hbox{\QCBOptA}%
      \@tempdima=#1\relax
      \ifOverFrame
          \typeout{This is not implemented yet}%
          \show\HELP
      \else
         \ifdim\wd0>\@tempdima
            \advance\@tempdima by \@tempdima
            \ifdim\wd0 >\@tempdima
               \setbox1 =\vbox{%
                  \unskip\hbox to \@tempdima{\hfill\GRAPHIC{#5}{#4}{#1}{#2}{#3}\hfill}%
                  \unskip\hbox to \@tempdima{\parbox[b]{\@tempdima}{\QCBOptA}}%
               }%
               \wd1=\@tempdima
            \else
               \textwidth=\wd0
               \setbox1 =\vbox{%
                 \noindent\hbox to \wd0{\hfill\GRAPHIC{#5}{#4}{#1}{#2}{#3}\hfill}\\%
                 \noindent\hbox{\QCBOptA}%
               }%
               \wd1=\wd0
            \fi
         \else
            \ifdim\wd0>0pt
              \hsize=\@tempdima
              \setbox1=\vbox{%
                \unskip\GRAPHIC{#5}{#4}{#1}{#2}{0pt}%
                \break
                \unskip\hbox to \@tempdima{\hfill \QCBOptA\hfill}%
              }%
              \wd1=\@tempdima
           \else
              \hsize=\@tempdima
              \setbox1=\vbox{%
                \unskip\GRAPHIC{#5}{#4}{#1}{#2}{0pt}%
              }%
              \wd1=\@tempdima
           \fi
         \fi
         \@tempdimb=\ht1
         \advance\@tempdimb by -#2
         \advance\@tempdimb by #3
         \leavevmode
         \raise -\@tempdimb \hbox{\box1}%
      \fi
      \egroup%
}%
\def\DFRAME#1#2#3#4#5{%
  \vspace\topsep
  \hfil\break
  \bgroup
     \leftskip\@flushglue
	 \rightskip\@flushglue
	 \parindent\z@
	 \parfillskip\z@skip
     \let\QCTOptA\empty
     \let\QCTOptB\empty
     \let\QCBOptA\empty
     \let\QCBOptB\empty
	 \vbox\bgroup
        \ifOverFrame 
           #5\QCTOptA\par
        \fi
        \GRAPHIC{#4}{#3}{#1}{#2}{\z@}%
        \ifUnderFrame 
           \break#5\QCBOptA
        \fi
	 \egroup
  \egroup
  \vspace\topsep
  \break
}%
\def\FFRAME#1#2#3#4#5#6#7{%
  \@ifundefined{floatstyle}
    {
     \begin{figure}[#1]%
    }
    {
	 \ifx#1h
      \begin{figure}[H]%
	 \else
      \begin{figure}[#1]%
	 \fi
	}
  \let\QCTOptA\empty
  \let\QCTOptB\empty
  \let\QCBOptA\empty
  \let\QCBOptB\empty
  \ifOverFrame
    #4
    \ifx\QCTOptA\empty
    \else
      \ifx\QCTOptB\empty
        \caption{\QCTOptA}%
      \else
        \caption[\QCTOptB]{\QCTOptA}%
      \fi
    \fi
    \ifUnderFrame\else
      \label{#5}%
    \fi
  \else
    \UnderFrametrue%
  \fi
  \begin{center}\GRAPHIC{#7}{#6}{#2}{#3}{\z@}\end{center}%
  \ifUnderFrame
    #4
    \ifx\QCBOptA\empty
      \caption{}%
    \else
      \ifx\QCBOptB\empty
        \caption{\QCBOptA}%
      \else
        \caption[\QCBOptB]{\QCBOptA}%
      \fi
    \fi
    \label{#5}%
  \fi
  \end{figure}%
 }%
\def\makeactives{
  \catcode`\"=\active
  \catcode`\;=\active
  \catcode`\:=\active
  \catcode`\'=\active
  \catcode`\~=\active
}
   \gdef\activesoff{%
      \def"{\string"}%
      \def;{\string;}%
      \def:{\string:}%
      \def'{\string'}%
      \def~{\string~}%
    }
\def\FRAME#1#2#3#4#5#6#7#8{%
 \bgroup
 \ifnum\@msidraft=\@ne
   \wasdrafttrue
 \else
   \wasdraftfalse%
 \fi
 \def\LaTeXparams{}%
 \dispkind=\z@
 \def\LaTeXparams{}%
 \doFRAMEparams{#1}%
 \ifnum\dispkind=\z@\IFRAME{#2}{#3}{#4}{#7}{#8}{#5}\else
  \ifnum\dispkind=\@ne\DFRAME{#2}{#3}{#7}{#8}{#5}\else
   \ifnum\dispkind=\tw@
    \edef\@tempa{\noexpand\FFRAME{\LaTeXparams}}%
    \@tempa{#2}{#3}{#5}{#6}{#7}{#8}%
    \fi
   \fi
  \fi
  \ifwasdraft\@msidraft=1\else\@msidraft=0\fi{}%
  \egroup
 }%
\def\TEXUX#1{"texux"}
\long\def\QQQ#1#2{%
     \long\expandafter\def\csname#1\endcsname{#2}}%
\long\def\QQA#1#2{}%
\def\QTR#1#2{{\csname#1\endcsname {#2}}}%
\def\EXPAND#1[#2]#3{}%
\def\NOEXPAND#1[#2]#3{}%
\def\LaTeXparent#1{}%
\def\ChildStyles#1{}%
\def\ChildDefaults#1{}%
\def\QTagDef#1#2#3{}%
  \providecommand{\UNICODE}[2][]{\protect\rule{.1in}{.1in}}
  \providecommand{\U}[1]{\protect\rule{.1in}{.1in}}
\def\QQfnmark#1{\footnotemark}
 \def\abstract{%
  \if@twocolumn
   \section*{Abstract (Not appropriate in this style!)}%
   \else \small 
   \begin{center}{\bf Abstract\vspace{-.5em}\vspace{\z@}}\end{center}%
   \quotation 
   \fi
  }%
   \def\registered{\relax\ifmmode{}\r@gistered
                    \else$\m@th\r@gistered$\fi}%
 \def\r@gistered{^{\ooalign
  {\hfil\raise.07ex\hbox{$\scriptstyle\rm\text{R}$}\hfil\crcr
  \mathhexbox20D}}}}{}%
\newdimen\theight
\def\newfmtname{LaTeX2e}
  \DeclareOldFontCommand{\rm}{\normalfont\rmfamily}{\mathrm}
  \DeclareOldFontCommand{\sf}{\normalfont\sffamily}{\mathsf}
  \DeclareOldFontCommand{\tt}{\normalfont\ttfamily}{\mathtt}
  \DeclareOldFontCommand{\bf}{\normalfont\bfseries}{\mathbf}
  \DeclareOldFontCommand{\it}{\normalfont\itshape}{\mathit}
  \DeclareOldFontCommand{\sl}{\normalfont\slshape}{\@nomath\sl}
  \DeclareOldFontCommand{\sc}{\normalfont\scshape}{\@nomath\sc}
\def\alpha{{\Greekmath 010B}}%
\def\beta{{\Greekmath 010C}}%
\def\gamma{{\Greekmath 010D}}%
\def\delta{{\Greekmath 010E}}%
\def\epsilon{{\Greekmath 010F}}%
\def\zeta{{\Greekmath 0110}}%
\def\eta{{\Greekmath 0111}}%
\def\theta{{\Greekmath 0112}}%
\def\iota{{\Greekmath 0113}}%
\def\kappa{{\Greekmath 0114}}%
\def\lambda{{\Greekmath 0115}}%
\def\mu{{\Greekmath 0116}}%
\def\nu{{\Greekmath 0117}}%
\def\xi{{\Greekmath 0118}}%
\def\pi{{\Greekmath 0119}}%
\def\rho{{\Greekmath 011A}}%
\def\sigma{{\Greekmath 011B}}%
\def\tau{{\Greekmath 011C}}%
\def\upsilon{{\Greekmath 011D}}%
\def\phi{{\Greekmath 011E}}%
\def\chi{{\Greekmath 011F}}%
\def\psi{{\Greekmath 0120}}%
\def\omega{{\Greekmath 0121}}%
\def\varepsilon{{\Greekmath 0122}}%
\def\vartheta{{\Greekmath 0123}}%
\def\varpi{{\Greekmath 0124}}%
\def\varrho{{\Greekmath 0125}}%
\def\varsigma{{\Greekmath 0126}}%
\def\varphi{{\Greekmath 0127}}%
\def\nabla{{\Greekmath 0272}}
\def\FindBoldGroup{%
   {\setbox0=\hbox{$\mathbf{x\global\edef\theboldgroup{\the\mathgroup}}$}}%
}
\def\Greekmath#1#2#3#4{%
    \if@compatibility
        \ifnum\mathgroup=\symbold
           \mathchoice{\mbox{\boldmath$\displaystyle\mathchar"#1#2#3#4$}}%
                      {\mbox{\boldmath$\textstyle\mathchar"#1#2#3#4$}}%
                      {\mbox{\boldmath$\scriptstyle\mathchar"#1#2#3#4$}}%
                      {\mbox{\boldmath$\scriptscriptstyle\mathchar"#1#2#3#4$}}%
        \else
           \mathchar"#1#2#3#4%
        \fi 
    \else 
        \FindBoldGroup
        \ifnum\mathgroup=\theboldgroup 
           \mathchoice{\mbox{\boldmath$\displaystyle\mathchar"#1#2#3#4$}}%
                      {\mbox{\boldmath$\textstyle\mathchar"#1#2#3#4$}}%
                      {\mbox{\boldmath$\scriptstyle\mathchar"#1#2#3#4$}}%
                      {\mbox{\boldmath$\scriptscriptstyle\mathchar"#1#2#3#4$}}%
        \else
           \mathchar"#1#2#3#4%
        \fi     	    
	  \fi}
\newif\ifGreekBold  \GreekBoldfalse
\let\SAVEPBF=\pbf
\def\pbf{\GreekBoldtrue\SAVEPBF}%
  \newcounter{equationnumber}  
  \def\mathletters{%
     \addtocounter{equation}{1}
     \edef\@currentlabel{\theequation}%
     \setcounter{equationnumber}{\c@equation}
     \setcounter{equation}{0}%
     \edef\theequation{\@currentlabel\noexpand\alph{equation}}%
  }
    \def\BibTeX{{\rm B\kern-.05em{\sc i\kern-.025em b}\kern-.08em
                 T\kern-.1667em\lower.7ex\hbox{E}\kern-.125emX}}}{}%
\def\AmS{{\protect\usefont{OMS}{cmsy}{m}{n}%
                A\kern-.1667em\lower.5ex\hbox{M}\kern-.125emS}}}{}%
\def\@@eqncr{\let\@tempa\relax
    \ifcase\@eqcnt \def\@tempa{& & &}\or \def\@tempa{& &}%
      \else \def\@tempa{&}\fi
     \@tempa
     \if@eqnsw
        \iftag@
           \@taggnum
        \else
           \@eqnnum\stepcounter{equation}%
        \fi
     \fi
     \global\tag@false
     \global\@eqnswtrue
     \global\@eqcnt\z@\cr}
\def\TCItag{\@ifnextchar*{\@TCItagstar}{\@TCItag}}
\def\@TCItag#1{%
    \global\tag@true
    \global\def\@taggnum{(#1)}%
    \global\def\@currentlabel{#1}}
\def\@TCItagstar*#1{%
    \global\tag@true
    \global\def\@taggnum{#1}%
    \global\def\@currentlabel{#1}}
\def\tint{\msi@int\textstyle\int}%
\def\tiint{\msi@int\textstyle\iint}%
\def\tiiint{\msi@int\textstyle\iiint}%
\def\tiiiint{\msi@int\textstyle\iiiint}%
\def\tidotsint{\msi@int\textstyle\idotsint}%
\def\toint{\msi@int\textstyle\oint}%
\newtoks\temptoksa
\newtoks\temptoksb
\newtoks\temptoksc
\def\msi@int#1#2{%
 \def\@temp{{#1#2\the\temptoksc_{\the\temptoksa}^{\the\temptoksb}}}%
 \futurelet\@nextcs
 \@int
}
\def\@int{%
   \ifx\@nextcs\limits
      \typeout{Found limits}%
      \temptoksc={\limits}%
	  \let\@next\@intgobble%
   \else\ifx\@nextcs\nolimits
      \typeout{Found nolimits}%
      \temptoksc={\nolimits}%
	  \let\@next\@intgobble%
   \else
      \typeout{Did not find limits or no limits}%
      \temptoksc={}%
      \let\@next\msi@limits%
   \fi\fi
   \@next   
}%
\def\@intgobble#1{%
   \typeout{arg is #1}%
   \msi@limits
}
\def\msi@limits{%
   \temptoksa={}%
   \temptoksb={}%
   \@ifnextchar_{\@limitsa}{\@limitsb}%
}
\def\@limitsa_#1{%
   \temptoksa={#1}%
   \@ifnextchar^{\@limitsc}{\@temp}%
}
\def\@limitsb{%
   \@ifnextchar^{\@limitsc}{\@temp}%
}
\def\@limitsc^#1{%
   \temptoksb={#1}%
   \@ifnextchar_{\@limitsd}{\@temp}%
}
\def\@limitsd_#1{%
   \temptoksa={#1}%
   \@temp
}
\def\dint{\msi@int\displaystyle\int}%
\def\diint{\msi@int\displaystyle\iint}%
\def\diiint{\msi@int\displaystyle\iiint}%
\def\diiiint{\msi@int\displaystyle\iiiint}%
\def\didotsint{\msi@int\displaystyle\idotsint}%
\def\doint{\msi@int\displaystyle\oint}%
\def\dprod{\mathop{\displaystyle \prod }}%
\def\dbigcap{\mathop{\displaystyle \bigcap }}%
\def\dcoprod{\mathop{\displaystyle \coprod }}%
\def\dbigcup{\mathop{\displaystyle \bigcup }}%
\def\ExitTCILatex{\makeatother }
\if@compatibility\message{amsmath already loaded}\fi\aftergroup\ExitTCILatex}
\if@compatibility\message{amstex already loaded}\fi\aftergroup\ExitTCILatex}
\if@compatibility\message{amsgen already loaded}\fi\aftergroup\ExitTCILatex}
\let\DOTSI\relax
\def\RIfM@{\relax\ifmmode}%
\def\FN@{\futurelet\next}%
\def\iint{\DOTSI\intno@\tw@\FN@\ints@}%
\def\iiint{\DOTSI\intno@\thr@@\FN@\ints@}%
\def\iiiint{\DOTSI\intno@4 \FN@\ints@}%
\def\idotsint{\DOTSI\intno@\z@\FN@\ints@}%
\def\ints@{\findlimits@\ints@@}%
\newif\iflimtoken@
\newif\iflimits@
\def\findlimits@{\limtoken@true\ifx\next\limits\limits@true
 \else\ifx\next\nolimits\limits@false\else
 \limtoken@false\ifx\ilimits@\nolimits\limits@false\else
 \ifinner\limits@false\else\limits@true\fi\fi\fi\fi}%
\def\multint@{\int\ifnum\intno@=\z@\intdots@                          
 \else\intkern@\fi                                                    
 \ifnum\intno@>\tw@\int\intkern@\fi                                   
 \ifnum\intno@>\thr@@\int\intkern@\fi                                 
 \int}
\def\multintlimits@{\intop\ifnum\intno@=\z@\intdots@\else\intkern@\fi
 \ifnum\intno@>\tw@\intop\intkern@\fi
 \ifnum\intno@>\thr@@\intop\intkern@\fi\intop}%
\def\intic@{%
    \mathchoice{\hskip.5em}{\hskip.4em}{\hskip.4em}{\hskip.4em}}%
\def\negintic@{\mathchoice
 {\hskip-.5em}{\hskip-.4em}{\hskip-.4em}{\hskip-.4em}}%
\def\ints@@{\iflimtoken@                                              
 \def\ints@@@{\iflimits@\negintic@
   \mathop{\intic@\multintlimits@}\limits                             
  \else\multint@\nolimits\fi                                          
  \eat@}
 \else                                                                
 \def\ints@@@{\iflimits@\negintic@
  \mathop{\intic@\multintlimits@}\limits\else
  \multint@\nolimits\fi}\fi\ints@@@}%
\def\intkern@{\mathchoice{\!\!\!}{\!\!}{\!\!}{\!\!}}%
\def\plaincdots@{\mathinner{\cdotp\cdotp\cdotp}}%
\def\intdots@{\mathchoice{\plaincdots@}%
 {{\cdotp}\mkern1.5mu{\cdotp}\mkern1.5mu{\cdotp}}%
 {{\cdotp}\mkern1mu{\cdotp}\mkern1mu{\cdotp}}%
 {{\cdotp}\mkern1mu{\cdotp}\mkern1mu{\cdotp}}}%
\def\RIfM@{\relax\protect\ifmmode}
\def\text{\RIfM@\expandafter\text@\else\expandafter\mbox\fi}
\let\nfss@text\text
\def\text@#1{\mathchoice
   {\textdef@\displaystyle\f@size{#1}}%
   {\textdef@\textstyle\tf@size{\firstchoice@false #1}}%
   {\textdef@\textstyle\sf@size{\firstchoice@false #1}}%
   {\textdef@\textstyle \ssf@size{\firstchoice@false #1}}%
   \glb@settings}
\def\textdef@#1#2#3{\hbox{{%
                    \everymath{#1}%
                    \let\f@size#2\selectfont
                    #3}}}
\newif\iffirstchoice@
\def\Let@{\relax\iffalse{\fi\let\\=\cr\iffalse}\fi}%
\def\vspace@{\def\vspace##1{\crcr\noalign{\vskip##1\relax}}}%
\def\multilimits@{\bgroup\vspace@\Let@
 \baselineskip\fontdimen10 \scriptfont\tw@
 \advance\baselineskip\fontdimen12 \scriptfont\tw@
 \lineskip\thr@@\fontdimen8 \scriptfont\thr@@
 \lineskiplimit\lineskip
 \vbox\bgroup\ialign\bgroup\hfil$\m@th\scriptstyle{##}$\hfil\crcr}%
\def\Sb{_\multilimits@}%
\def\endSb{\crcr\egroup\egroup\egroup}%
\def\Sp{^\multilimits@}%
\newdimen\ex@
\def\rightarrowfill@#1{$#1\m@th\mathord-\mkern-6mu\cleaders
 \hbox{$#1\mkern-2mu\mathord-\mkern-2mu$}\hfill
 \mkern-6mu\mathord\rightarrow$}%
\def\leftarrowfill@#1{$#1\m@th\mathord\leftarrow\mkern-6mu\cleaders
 \hbox{$#1\mkern-2mu\mathord-\mkern-2mu$}\hfill\mkern-6mu\mathord-$}%
\def\leftrightarrowfill@#1{$#1\m@th\mathord\leftarrow
\mkern-6mu\cleaders
 \hbox{$#1\mkern-2mu\mathord-\mkern-2mu$}\hfill
 \mkern-6mu\mathord\rightarrow$}%
\def\overrightarrow{\mathpalette\overrightarrow@}%
\def\overrightarrow@#1#2{\vbox{\ialign{##\crcr\rightarrowfill@#1\crcr
 \noalign{\kern-\ex@\nointerlineskip}$\m@th\hfil#1#2\hfil$\crcr}}}%
\def\overleftarrow{\mathpalette\overleftarrow@}%
\def\overleftarrow@#1#2{\vbox{\ialign{##\crcr\leftarrowfill@#1\crcr
 \noalign{\kern-\ex@\nointerlineskip}$\m@th\hfil#1#2\hfil$\crcr}}}%
\def\overleftrightarrow{\mathpalette\overleftrightarrow@}%
\def\overleftrightarrow@#1#2{\vbox{\ialign{##\crcr
   \leftrightarrowfill@#1\crcr
 \noalign{\kern-\ex@\nointerlineskip}$\m@th\hfil#1#2\hfil$\crcr}}}%
\def\underrightarrow{\mathpalette\underrightarrow@}%
\def\underrightarrow@#1#2{\vtop{\ialign{##\crcr$\m@th\hfil#1#2\hfil
  $\crcr\noalign{\nointerlineskip}\rightarrowfill@#1\crcr}}}%
\def\underleftarrow{\mathpalette\underleftarrow@}%
\def\underleftarrow@#1#2{\vtop{\ialign{##\crcr$\m@th\hfil#1#2\hfil
  $\crcr\noalign{\nointerlineskip}\leftarrowfill@#1\crcr}}}%
\def\underleftrightarrow{\mathpalette\underleftrightarrow@}%
\def\underleftrightarrow@#1#2{\vtop{\ialign{##\crcr$\m@th
  \hfil#1#2\hfil$\crcr
 \noalign{\nointerlineskip}\leftrightarrowfill@#1\crcr}}}%
\def\qopnamewl@#1{\mathop{\operator@font#1}\nlimits@}
\let\nlimits@\displaylimits
\def\setboxz@h{\setbox\z@\hbox}
\def\varlim@#1#2{\mathop{\vtop{\ialign{##\crcr
 \hfil$#1\m@th\operator@font lim$\hfil\crcr
 \noalign{\nointerlineskip}#2#1\crcr
 \noalign{\nointerlineskip\kern-\ex@}\crcr}}}}
 \def\rightarrowfill@#1{\m@th\setboxz@h{$#1-$}\ht\z@\z@
  $#1\copy\z@\mkern-6mu\cleaders
  \hbox{$#1\mkern-2mu\box\z@\mkern-2mu$}\hfill
  \mkern-6mu\mathord\rightarrow$}
\def\leftarrowfill@#1{\m@th\setboxz@h{$#1-$}\ht\z@\z@
  $#1\mathord\leftarrow\mkern-6mu\cleaders
  \hbox{$#1\mkern-2mu\copy\z@\mkern-2mu$}\hfill
  \mkern-6mu\box\z@$}
\def\projlim{\qopnamewl@{proj\,lim}}
\def\injlim{\qopnamewl@{inj\,lim}}
\def\varinjlim{\mathpalette\varlim@\rightarrowfill@}
\def\varprojlim{\mathpalette\varlim@\leftarrowfill@}
\def\varliminf{\mathpalette\varliminf@{}}
\def\varliminf@#1{\mathop{\underline{\vrule\@depth.2\ex@\@width\z@
   \hbox{$#1\m@th\operator@font lim$}}}}
\def\varlimsup{\mathpalette\varlimsup@{}}
\def\varlimsup@#1{\mathop{\overline
  {\hbox{$#1\m@th\operator@font lim$}}}}
\def\align{\@verbatim \frenchspacing\@vobeyspaces \@alignverbatim
You are using the "align" environment in a style in which it is not defined.}
\let\csname endalign*\endcsname =\endtrivlist
\def\alignat{\@verbatim \frenchspacing\@vobeyspaces \@alignatverbatim
You are using the "alignat" environment in a style in which it is not defined.}
\let\csname endalignat*\endcsname =\endtrivlist
\def\xalignat{\@verbatim \frenchspacing\@vobeyspaces \@xalignatverbatim
You are using the "xalignat" environment in a style in which it is not defined.}
\let\csname endxalignat*\endcsname =\endtrivlist
\def\gather{\@verbatim \frenchspacing\@vobeyspaces \@gatherverbatim
You are using the "gather" environment in a style in which it is not defined.}
\let\csname endgather*\endcsname =\endtrivlist
\def\multiline{\@verbatim \frenchspacing\@vobeyspaces \@multilineverbatim
You are using the "multiline" environment in a style in which it is not defined.}
\let\csname endmultiline*\endcsname =\endtrivlist
\def\arrax{\@verbatim \frenchspacing\@vobeyspaces \@arraxverbatim
You are using a type of "array" construct that is only allowed in AmS-LaTeX.}
\def\tabulax{\@verbatim \frenchspacing\@vobeyspaces \@tabulaxverbatim
You are using a type of "tabular" construct that is only allowed in AmS-LaTeX.}
\let\csname endarrax*\endcsname =\endtrivlist
\let\csname endtabulax*\endcsname =\endtrivlist
 \def\endequation{%
     \ifmmode\ifinner 
      \iftag@
        \addtocounter{equation}{-1} 
        $\hfil
           \displaywidth\linewidth\@taggnum\egroup \endtrivlist
        \global\tag@false
        \global\@ignoretrue   
      \else
        $\hfil
           \displaywidth\linewidth\@eqnnum\egroup \endtrivlist
        \global\tag@false
        \global\@ignoretrue 
      \fi
     \else   
      \iftag@
        \addtocounter{equation}{-1} 
        \eqno \hbox{\@taggnum}
        \global\tag@false%
        $$\global\@ignoretrue
      \else
        \eqno \hbox{\@eqnnum}
        $$\global\@ignoretrue
      \fi
     \fi\fi
 } 
 \newif\iftag@ \tag@false
 \def\TCItag{\@ifnextchar*{\@TCItagstar}{\@TCItag}}
 \def\@TCItag#1{%
     \global\tag@true
     \global\def\@taggnum{(#1)}%
     \global\def\@currentlabel{#1}}
 \def\@TCItagstar*#1{%
     \global\tag@true
     \global\def\@taggnum{#1}%
     \global\def\@currentlabel{#1}}
     \def\tag{\@ifnextchar*{\@tagstar}{\@tag}}
     \def\@tag#1{%
         \global\tag@true
         \global\def\@taggnum{(#1)}}
     \def\@tagstar*#1{%
         \global\tag@true
         \global\def\@taggnum{#1}}
\begin{document}
\title[Cosheafification]{Cosheafification}
\author{Andrei V. Prasolov}
\address{Institute of Mathematics and Statistics\\
The University of Troms\o\ - The Arctic University of Norway\\
N-9037 Troms\o , Norway}
\email{andrei.prasolov@uit.no}
\urladdr{http://serre.mat-stat.uit.no/ansatte/andrei/Welcome.html}
\date{}
\subjclass[2000]{Primary 18F10, 18F20; Secondary 55P55, 55Q07, 14F20}
\keywords{Cosheaves, smooth precosheaves, cosheafification, pro-category,
cosheaf homology, locally presentable categories, accessible categories}

\begin{abstract}
It is proved that for any Grothendieck site $X$, there exists a coreflection
(called \emph{cosheafification}) from the category of precosheaves on $X$
with values in a category $\mathbf{K}$, to the full subcategory of
cosheaves, provided either $\mathbf{K}$ or $\mathbf{K}^{op}$ is locally
presentable. If $\mathbf{K}$ is cocomplete, such a coreflection is built
explicitly for the (pre)cosheaves with values in the category $\mathbf{Pro}%
\left( \mathbf{K}\right) $ of pro-objects in $\mathbf{K}$. In the case of
precosheaves on topological spaces, it is proved that any precosheaf with
values in $\mathbf{Pro}\left( \mathbf{K}\right) $ is \emph{smooth}, i.e. is
strongly locally isomorphic to a cosheaf. Constant cosheaves are
constructed, and there are established connections with shape theory.
\end{abstract}

\maketitle
\tableofcontents

\begin{acknowledgement}
The author wishes to express his gratitude to Professor Carles Casacuberta.
The idea of part (\ref{Th-Main-K-op-locally-presentable}) of Theorem \ref%
{Th-Main} belongs to him. The long discussions with him helped the author to
understand the importance of locally presentable and accessible categories.
\end{acknowledgement}

\setcounter{section}{-1}

\section{Introduction}

A \emph{presheaf} (\emph{precosheaf}) on a topological space $X$ with values
in a category $\mathbf{K}$ is just a contravariant (covariant) functor from
the category of open subsets of $X$ to $\mathbf{K}$, while a \emph{sheaf} (%
\emph{cosheaf}) is such a functor satisfying some extra conditions.
Therefore, the category of (pre)cosheaves with values in $\mathbf{K}$ is
dual to the category of (pre)sheaves with values in the opposite category $%
\mathbf{K}^{op}$.

While the theory of sheaves is well developed, and is covered by a plenty of
publications, the theory of cosheaves is represented much poorer. The main
reason for this is that \emph{cofiltered limits} are \emph{not} exact in the
\textquotedblleft usual\textquotedblright\ categories like sets, abelian
groups, rings, or modules. On the contrary, \emph{filtered colimits} are
exact in the above categories, which allows to construct rather rich
theories of sheaves with values in the \textquotedblleft
usual\textquotedblright\ categories. To sum up, the \textquotedblleft
usual\textquotedblright\ categories $\mathbf{K}$ are badly suited for
cosheaf theory. Dually, the categories $\mathbf{K}^{op}$ are badly suited
for sheaf theory.

The first step in building a suitable theory of cosheaves would be
constructing a \emph{cosheaf associated with a precosheaf} (simply: \emph{%
cosheafification}). As is shown in this paper (see Theorem \ref{Th-Main}),
it is possible in many situations, namely for precosheaves with values in an
arbitrary \emph{locally presentable category} (or a dual to such a
category). The class of locally presentable categories is huge \cite[Ch. 1,
4 and 5]{Adamek-Rosicky-1994-Locally-presentable-categories-MR1294136}. It
includes all varieties and quasi-varieties of many-sorted algebras, and
essentially algebraic categories \cite[Theorem 3.36]%
{Adamek-Rosicky-1994-Locally-presentable-categories-MR1294136} of \emph{%
partial algebras} like the category $\mathbf{Cat}$ of small categories, and
the category $\mathbf{Pos}$ of posets. Even the class of locally \emph{%
finitely} presentable categories is very large, and includes \cite[Corollary
3.7 and Theorem 3.24]%
{Adamek-Rosicky-1994-Locally-presentable-categories-MR1294136} all varieties
of many-sorted \emph{finitary} algebras like $\mathbf{Set}$, $\mathbf{Gr}$, $%
\mathbf{Ab}$, modules etc. and all quasi-varieties like the category $%
\mathbf{Gra}$ of graphs, the category of torsion-free abelian groups, or the
category $\Sigma $-$\mathbf{Rel}$ of finitary relations.

However, our purpose is to prepare a foundation for future \emph{homology}
and \emph{homotopy} theories of cosheaves (see Conjectures \ref%
{Conj-Satellites-H}, \ref{Conj-Satellites-Pi} and \ref{Conj-Etale} below).
Therefore, we need a more or less \emph{explicit} construction. Moreover, we
need a construction satisfying \emph{good exactness} properties. In \cite%
{Funk-1995-The-display-locale-of-a-cosheaf-MR1322801} the cosheafification
of precosheaves of sets on topological spaces is discussed. It is sketched
there \cite[Theorem 6.3]{Funk-1995-The-display-locale-of-a-cosheaf-MR1322801}
that on complete metric spaces, the cosheafification can be described
explicitly by using the so-called \textquotedblleft display space of a
precosheaf\textquotedblright . See Example \ref{Ex-Non-smooth-precosheaf}
and \ref{Converging-sequence}. The construction of \cite[Theorem 6.3]%
{Funk-1995-The-display-locale-of-a-cosheaf-MR1322801} works there, but
produces cosheaves that are hardly interesting for future applications. In 
\cite[Appendix B]{Woolf-2009-MR2591969} it is claimed that the display space
construction works for \emph{any} topological space and \emph{any} cosheaf
of sets on it. However, his Lemma B.3 contains essential errors, see \cite%
{Woolf-2015-Erratum-to-The-fundamental-category-of-a-stratified-space-MR3313639}%
. Anyway, even the construction from \cite%
{Funk-1995-The-display-locale-of-a-cosheaf-MR1322801} for complete metric
spaces is \emph{not an exact functor}, and therefore is \emph{not} suitable
for homology and homotopy theories of cosheaves.

In \cite{Bredon-Book-MR1481706} and \cite{Bredon-MR0226631}, it is assumed
(correctly, in our opinion!) that a suitable cosheafification of a
precosheaf should be \emph{locally isomorphic} to the precosheaf. This
notion is much stronger than a $\mathbf{K}$-local isomorphism (Definition %
\ref{Def-Local-isomorphism}). We call a local isomorphism in the sense of
Bredon a \emph{strong local isomorphism} (Definition \ref%
{Def-Local-isomorphism-Bredon}). Precosheaves that admit a \textquotedblleft
correct\textquotedblright\ cosheafification are called \emph{smooth}
(Definition \ref{Def-smooth}, \cite[Corollary VI.3.2 and Definition VI.3.4]%
{Bredon-Book-MR1481706}, or \cite[Corollary 3.5 and Definition 3.7]%
{Bredon-MR0226631}). It is not clear whether one has enough smooth
precosheaves for building a suitable theory of cosheaves (see Example \ref%
{Non-smooth-precosheaf}, \ref{Ex-Non-smooth-precosheaf} and \ref%
{Converging-sequence}). In fact, Glen E. Bredon back in 1968 was rather
pessimistic on the issue. See \cite{Bredon-MR0226631}, p. 2:
\textquotedblleft \emph{The most basic concept in sheaf theory is that of a
sheaf generated by a given presheaf. In categorical terminology this is the
concept of a reflector from presheaves to sheaves. We believe that there is
not much hope for the existence of a reflector from precosheaves to cosheaves%
}\textquotedblright . It seems that he was still pessimistic in 1997:
Chapter VI \textquotedblleft Cosheaves and \v{C}ech
homology\textquotedblright\ of his book \cite{Bredon-Book-MR1481706} is
almost identical to \cite{Bredon-MR0226631}.

On the contrary, our approach seems to have solved the problem. If one
allows (pre)cosheaves (defined on an \emph{arbitrary} small Grothendieck
site) to take values in a larger category, then the desired reflection (in
fact, \textbf{co}reflection) can be constructed. It follows from our
considerations in this paper, that the best candidate for such category is
the pro-category $\mathbf{Pro}\left( \mathbf{K}\right) $ (see Definition \ref%
{Def-Pro-C}) for an \emph{arbitrary cocomplete} category $\mathbf{K}$. Our
cosheafification is built like this (Definition \ref{Def-Plus-cosheaf}):%
\begin{equation*}
\mathcal{A}\longmapsto \mathcal{A}_{+}\longmapsto \mathcal{A}_{++}=\mathcal{A%
}_{\#}.
\end{equation*}%
We have succeeded because of the niceness of the category $\mathbf{Pro}%
\left( \mathbf{K}\right) $. For \textquotedblleft usual\textquotedblright\
precosheaves (with values in $\mathbf{K}$ ) the above two-step process does
not work. In \cite{Prasolov-smooth-cosheaves-MR2879363}, this approach was
developed for precosheaves with values in $\mathbf{Pro}\left( \mathbf{Set}%
\right) $ and $\mathbf{Pro}\left( \mathbf{Ab}\right) $, and part (\ref%
{Main-constant-Set}) and (\ref{Main-constant-Ab}) of Theorem \ref%
{Main-constant} were proved. In this paper, the two statements are proved
much easier, using a significantly more general part (\ref%
{Main-constant-General}) of Theorem \ref{Main-constant}.

\begin{remark}
An interesting attempt is made in \cite{Schneiders-MR885939} where the
author sketches a cosheaf theory on topological spaces with values in a
category $\mathbf{L}$, dual to an \textquotedblleft
elementary\textquotedblright\ category $\mathbf{L}^{op}$. He proposes a
candidate for such a category. Let $\alpha <\beta $ be two inaccessible
cardinals. Then $\mathbf{L}$ is the category $\mathbf{Pro}_{\beta }\left( 
\mathbf{Ab}_{\alpha }\right) $ of abelian pro-groups $\left( G_{j}\right)
_{j\in \mathbf{J}}$ such that $card\left( G_{j}\right) <\alpha $ and $%
card\left( Mor\left( \mathbf{J}\right) \right) <\beta $. However, our\
pro-category $\mathbf{Pro}\left( \mathbf{K}\right) $ cannot be used in the
cosheaf theory from \cite{Schneiders-MR885939} because the category $\left( 
\mathbf{Pro}\left( \mathbf{K}\right) \right) ^{op}$ is \textbf{not}
elementary.
\end{remark}

\begin{remark}
Another cosheaf theory on topological spaces was sketched in \cite%
{Sugiki-2001-33}: the (pre)cosheaves there take values in the category $%
\mathbf{Pro}\left( \mathbf{Mod}\left( k\right) \right) $ where $k$ is a
commutative quasi-noetherian \cite[Definition 2.25]%
{Prasolov-universal-coefficients-formula-2013-MR3095217} ring.
\end{remark}

The cosheafification we have constructed guarantees that our precosheaves
are always smooth (Corollary \ref{Corollary-smooth}). Moreover, in Theorem %
\ref{Our-cosheaves-vs-Bredon}, we give necessary and sufficient conditions
for smoothness of a precosheaf with values in an \textquotedblleft
old\textquotedblright\ category $\mathbf{K}$:\ it is smooth iff our
coreflection applied to that precosheaf produces a cosheaf which takes
values in that old category.

Another difficulty in cosheaf theory is the lack of suitable \emph{constant}
cosheaves. In \cite{Bredon-Book-MR1481706} and \cite{Bredon-MR0226631}, such
cosheaves are constructed only for locally connected spaces. See Examples %
\ref{p0-prime-not-cosheaf} and \ref{Converging-sequence}. In Theorem \ref%
{Main-constant}, constant cosheaves are constructed. It turns out that they
are closely connected to shape theory. Namely, the constant cosheaf $\left(
G\right) _{\#}$ with values in $\mathbf{Pro}\left( \mathbf{K}\right) $ is
isomorphic to the \emph{pro-homotopy} (Definition \ref{Pro-homotopy-groups})
cosheaf $G\otimes _{\mathbf{Set}}pro$-$\pi _{0}$ (in particular $\left( 
\mathbf{pt}\right) _{\#}%
\simeq%
pro$-$\pi _{0}$), while the constant cosheaf $\left( A\right) _{\#}$ with
values in $\mathbf{Pro}\left( \mathbf{Ab}\right) $ is isomorphic to the 
\emph{pro-homology} (Definition \ref{Pro-homology-groups}) cosheaf $pro$-$%
H_{0}\left( \_,A\right) $.

In future papers, we are planning to develop homology of cosheaves, i.e. to
study projective and flabby cosheaves, projective and flabby resolutions,
and to construct the left satellites 
\begin{equation*}
H_{n}\left( X,\mathcal{A}\right) :=L_{n}\Gamma \left( X,\mathcal{A}\right)
\end{equation*}%
of the global sections functor 
\begin{equation*}
H_{0}\left( X,\mathcal{A}\right) :=\Gamma \left( X,\mathcal{A}\right) .
\end{equation*}%
It is expected that deeper connections to shape theory will be discovered,
as is stated in the two Conjectures below:

\begin{conjecture}
\label{Conj-Satellites-H}On the site $NORM\left( X\right) $ (Example \ref%
{Site-NORM}), the left satellites of $H_{0}$ are naturally isomorphic to the
pro-homology:%
\begin{equation*}
H_{n}\left( X,A_{\#}\right) =H_{n}\left( X,pro\text{-}H_{0}\left(
\_,A\right) \right) 
\simeq%
pro\text{-}H_{n}\left( X,A\right) .
\end{equation*}%
If $X$ is Hausdorff paracompact, the above isomorphisms exist also for the
standard site $OPEN\left( X\right) $ (Example \ref{Site-TOP}).
\end{conjecture}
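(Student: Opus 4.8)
The natural plan is to treat both sides as homological $\delta$-functors in the coefficient group $A$ and to invoke universality. By construction, $\{H_{n}(X,\_)=L_{n}\Gamma (X,\_)\}_{n\geq 0}$ is a universal homological $\delta$-functor extending $\Gamma $ on the category of cosheaves with values in $\mathbf{Pro}\left( \mathbf{K}\right) $ (here $\mathbf{K}=\mathbf{Ab}$), so it suffices to show that $A\mapsto H_{n}(X,A_{\#})$ and $A\mapsto pro$-$H_{n}(X,A)$ are both universal $\delta$-functors in $A$ that agree in degree $0$. Degree $0$ is essentially in hand: $L_{0}\Gamma (X,A_{\#})=\Gamma (X,A_{\#})=A_{\#}(X)$, and by Theorem \ref{Main-constant} the constant cosheaf $A_{\#}$ is identified with the pro-homology cosheaf $pro$-$H_{0}(\_,A)$, whose value at $X$ is $pro$-$H_{0}(X,A)$.

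To run this I would first verify that the cosheaf category over $NORM(X)$ (Example \ref{Site-NORM}) has enough projectives, or at least enough $\Gamma $-acyclic (flabby) cosheaves, so that the left satellites exist and may be computed from resolutions; the local presentability inputs behind Theorem \ref{Th-Main}, together with the cocompleteness of $\mathbf{Pro}\left( \mathbf{Ab}\right) $, should supply these. Next I would build an explicit \v{C}ech-type resolution of $A_{\#}$ whose degree-$p$ term is a coproduct of cosheafified representables, with coefficients in $A$, indexed by the $p$-simplices of the nerves $N(\mathcal{U})$ of normal coverings $\mathcal{U}$, assembled into a pro-object over the covering system. Applying $\Gamma (X,\_)$ to this resolution should return, levelwise, the \v{C}ech chain complexes $C_{\bullet }(\mathcal{U},A)$, whose homology pro-system is exactly $\{H_{n}(N(\mathcal{U}),A)\}_{\mathcal{U}}=pro$-$H_{n}(X,A)$; the freeness of the representable terms simultaneously supplies the effaceability (epimorphisms from $\Gamma $-acyclic objects) needed for universality.

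The hard part will be proving exactness of this \v{C}ech-type complex \emph{in the cosheaf category}, together with the $\Gamma $-acyclicity of its terms: exactness must be tested after cosheafification, and kernels and cokernels of maps of $\mathbf{Pro}\left( \mathbf{Ab}\right) $-valued cosheaves do not behave as naively as in $\mathbf{Ab}$. This is precisely the point at which the failure of cofiltered limits to be exact in $\mathbf{Ab}$ (noted in the Introduction) must be absorbed by working in $\mathbf{Pro}\left( \mathbf{Ab}\right) $, where the relevant pro-systems become acyclic. A secondary difficulty is checking that $A\mapsto A_{\#}$ carries short exact sequences of coefficients to short exact sequences of cosheaves, so that the two $\delta$-functor structures genuinely match; since $A\mapsto A_{\#}$ need not be exact on the nose, this must again be established at the level of $\mathbf{Pro}\left( \mathbf{Ab}\right) $. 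Finally, for the extension from $NORM(X)$ to $OPEN(X)$ (Example \ref{Site-TOP}) under the Hausdorff paracompact hypothesis, I would argue that normal coverings are cofinal among all open coverings in that setting, so that the two covering systems are pro-isomorphic and yield identical pro-homology.
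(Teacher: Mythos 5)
The statement you are proving is stated in the paper as a \emph{Conjecture}: the paper gives no proof of it, explicitly deferring the construction of the left satellites $L_{n}\Gamma$ (projective and flabby cosheaves, resolutions, etc.) to future work. So there is no argument in the paper to compare yours against, and your text must stand on its own as a proof. It does not: it is an outline in which every load-bearing step --- existence of enough projective or $\Gamma$-acyclic cosheaves in $\mathbf{CS}\left( NORM\left( X\right) ,\mathbf{Pro}\left( \mathbf{Ab}\right) \right) $, exactness of the proposed \v{C}ech-type complex \emph{after} cosheafification, $\Gamma$-acyclicity of its terms, and exactness of $A\mapsto A_{\#}$ on short exact sequences of coefficients --- is flagged as ``to be verified'' rather than verified. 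The only part that is actually in hand is the degree-$0$ identification $\Gamma \left( X,A_{\#}\right) =A_{\#}\left( X\right) \simeq pro\text{-}H_{0}\left( X,A\right) $, which is Theorem \ref{Main-constant}.

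Beyond incompleteness, your opening strategy contains a step that fails. You propose to treat $A\mapsto H_{n}\left( X,A_{\#}\right) $ and $A\mapsto pro\text{-}H_{n}\left( X,A\right) $ as universal homological $\delta$-functors \emph{in the coefficient variable} $A$ and conclude by universality. Universality in $A$ requires effaceability in $A$: for each $A$ an epimorphism $P\rightarrow A$ with the higher functors vanishing on $P$. But $pro\text{-}H_{n}\left( X,P\right) $ does not vanish for $n>0$ even when $P$ is free abelian --- already $pro\text{-}H_{1}\left( S^{1},\mathbb{Z}\right) \simeq \mathbb{Z}$ --- so neither side is effaceable in the coefficient variable and the universality argument cannot be run there. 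The satellites in the conjecture are taken in the \emph{cosheaf} variable, so the comparison must be made by producing a resolution of the single cosheaf $A_{\#}$ by $\Gamma$-acyclic cosheaves and computing; this is exactly the part of your plan whose exactness and acyclicity claims remain unproved, and it is where the genuine difficulty of the conjecture lives.
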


\begin{conjecture}
\label{Conj-Satellites-Pi}On the site $NORM\left( X\right) $, the \textbf{%
non-abelian} left satellites of $H_{0}$ are naturally isomorphic to the
pro-homotopy:%
\begin{eqnarray*}
H_{n}\left( X,S_{\#}\right) &=&H_{n}\left( X,S\times pro\text{-}\pi
_{0}\right) 
\simeq%
S\times pro\text{-}\pi _{n}\left( X\right) , \\
H_{n}\left( X,\left( \mathbf{pt}\right) _{\#}\right) &=&H_{n}\left( X,pro%
\text{-}\pi _{0}\right) 
\simeq%
pro\text{-}\pi _{n}\left( X\right) .
\end{eqnarray*}%
If $X$ is Hausdorff paracompact, the above isomorphisms exist also for the
standard site $OPEN\left( X\right) $.
\end{conjecture}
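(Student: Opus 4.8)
The plan is to construct the non-abelian left satellites $L_n\Gamma$ by \emph{simplicial} (cotriple) resolutions rather than by the classical additive machinery, since the target category $\mathbf{Pro}\left(\mathbf{Set}\right)$ carries no abelian structure. Concretely, I would resolve a cosheaf $\mathcal{A}$ by a simplicial object $P_\bullet\to\mathcal{A}$ built from \emph{projective} (equivalently, $\Gamma$-acyclic flabby) cosheaves, apply the global cosections functor $\Gamma\left(X,-\right)$ degreewise, and \emph{define} $H_n\left(X,\mathcal{A}\right)$ as $\pi_n$ of the resulting simplicial object in $\mathbf{Pro}\left(\mathbf{Set}\right)$. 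The cotriple here is supplied by the coreflection of Theorem \ref{Th-Main}: the cosheafification adjunction generates free (hence projective) cosheaves and, via the smoothness guaranteed by Corollary \ref{Corollary-smooth}, keeps the construction local. The first task is therefore to produce enough projectives on $NORM\left(X\right)$ and to verify that the satellites so defined are independent of the chosen resolution.

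Next I would exploit the geometry of $NORM\left(X\right)$. For a normal covering $\mathcal{U}$ the cosheaf condition exhibits $\Gamma\left(X,-\right)$ as the colimit over the \v{C}ech diagram of $\mathcal{U}$; applied to the \emph{constant} cosheaf $\left(\mathbf{pt}\right)_{\#}$, this \v{C}ech simplicial object is precisely the nerve $N\left(\mathcal{U}\right)$ of the covering, since in degree $p$ it records the nonempty $\left(p+1\right)$-fold intersections. The homotopy groups $\pi_n\left(N\left(\mathcal{U}\right)\right)$, assembled as $\mathcal{U}$ ranges over $NORM\left(X\right)$, are by Definition \ref{Pro-homotopy-groups} exactly the pro-object defining shape. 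Combining this with the identification $\left(\mathbf{pt}\right)_{\#}\simeq pro\text{-}\pi_0$ from Theorem \ref{Main-constant} should yield
\begin{equation*}
H_n\left(X,\left(\mathbf{pt}\right)_{\#}\right)\simeq\left(\pi_n\left(N\left(\mathcal{U}\right)\right)\right)_{\mathcal{U}}=pro\text{-}\pi_n\left(X\right).
\end{equation*}
The coefficient case $S_{\#}\simeq S\times pro\text{-}\pi_0$ then follows by the same computation, once one checks that $\Gamma$ and the relevant \v{C}ech colimits commute with the product by the discrete set $S$, giving $H_n\left(X,S_{\#}\right)\simeq S\times pro\text{-}\pi_n\left(X\right)$.

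For the paracompact statement I would compare the two sites of Examples \ref{Site-NORM} and \ref{Site-TOP}. On a Hausdorff paracompact $X$ every open covering is numerable and hence admits a subordinate partition of unity, so the normal coverings are cofinal among all open coverings; since the satellites depend only on the cofinal system of covers entering the \v{C}ech colimits, the isomorphisms transport from $NORM\left(X\right)$ to $OPEN\left(X\right)$.

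The hard part will be the foundations of the non-abelian satellites: proving that the simplicially defined $L_n\Gamma$ are well defined (independent of the resolution) and carry the expected functoriality and connecting maps, and that the cotriple-derived pro-homotopy agrees term by term with the shape-theoretic $pro\text{-}\pi_n$. The crucial comparison in the second step — matching the homotopy of $\Gamma$ applied to the resolution of $\left(\mathbf{pt}\right)_{\#}$ with the homotopy of the nerves — is in effect a pro-homotopy \v{C}ech-to-derived-functor tower comparison, and controlling it inside $\mathbf{Pro}\left(\mathbf{Set}\right)$, where limits and colimits interact subtly, is precisely the obstacle that keeps this statement a conjecture rather than a theorem.
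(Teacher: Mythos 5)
This statement is Conjecture \ref{Conj-Satellites-Pi}: the paper gives \emph{no} proof of it. The author explicitly defers the construction of the left satellites $H_{n}\left( X,\mathcal{A}\right) =L_{n}\Gamma \left( X,\mathcal{A}\right) $, the theory of projective and flabby cosheaves, and the comparison with pro-homotopy to future papers; within this paper only the $n=0$ case is established (Theorem \ref{Main-constant} and Corollary \ref{Cor-One-point-constant}, which identify $\left( \mathbf{pt}\right) _{\#}$ with $pro$-$\pi _{0}$ and $S_{\#}$ with $S\times pro$-$\pi _{0}$). So there is nothing in the paper to compare your argument against, and what you have written is a research programme rather than a proof --- as you yourself concede in your final paragraph. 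Every load-bearing step is left open: the existence of enough projectives in $\mathbf{CS}\left( NORM\left( X\right) ,\mathbf{Pro}\left( \mathbf{Set}\right) \right) $, the well-definedness and resolution-independence of the cotriple-style non-abelian satellites, and the comparison of the derived-functor tower with the shape-theoretic $pro$-$\pi _{n}$.

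One concrete step in your sketch that would fail as stated: the \v{C}ech simplicial object of the cosheaf $\left( \mathbf{pt}\right) _{\#}\simeq pro$-$\pi _{0}$ for a covering $\mathcal{U}=\left\{ U_{i}\right\} $ has, in degree $p$, the coproduct $\coprod pro\text{-}\pi _{0}\left( U_{i_{0}}\cap \dots \cap U_{i_{p}}\right) $, not the set of nonempty $\left( p+1\right) $-fold intersections. This is a Vietoris-type object, not the nerve $N\left( \mathcal{U}\right) $; the two agree only when the multiple intersections are connected (or after a comparison theorem of \v{C}ech-versus-Vietoris type), which is exactly the kind of delicate pro-category argument that must be supplied, not asserted. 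The cofinality of normal coverings on paracompact Hausdorff spaces is the one ingredient of your outline that is genuinely routine. In short: the proposal is a plausible blueprint consistent with the author's stated intentions, but it does not prove the statement, and the statement remains unproven in the paper as well.
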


The main application (Theorem \ref{Main-constant}) deals with the case of
topological spaces (i.e. the site $OPEN\left( X\right) $). Our
constructions, however, are valid for general Grothendieck sites. The
constructions in (strong) shape theory use essentially \emph{normal}
coverings instead of general coverings, therefore dealing with the site $%
NORM\left( X\right) $ instead of the site $OPEN\left( X\right) $. It seems
that Theorem \ref{Main-constant} is valid also for the site $NORM\left(
X\right) $. Applying our machinery (from this paper and from future papers)
to the site $FINITE\left( X\right) $ (Example \ref{Site-FINITE}), we expect
to obtain results on homology of the Stone-%
\u{C}ech \ %
compactification $\beta \left( X\right) $. To deal with the equivariant
homology, one should apply the machinery to the equivariant site $%
OPEN_{G}\left( X\right) $ (Example \ref{Equivariant}).

It is not yet clear how to generalize the above Conjectures to \emph{strong
shape theory}. However, we have some ideas how to do that.

Other possible applications could be in \'{e}tale homotopy theory \cite%
{Artin-Mazur-MR883959} as is summarized in the following

\begin{conjecture}
\label{Conj-Etale}Let $X^{et}$ be the site from Example \ref{Site-ETALE}.

\begin{enumerate}
\item The left satellites of $H_{0}$ are naturally isomorphic to the \'{e}%
tale pro-homology:%
\begin{equation*}
H_{n}\left( X^{et},A_{\#}\right) 
\simeq%
H_{n}^{et}\left( X,A\right) .
\end{equation*}

\item The non-abelian left satellites of $H_{0}$ are naturally isomorphic to
the \'{e}tale pro-homotopy:%
\begin{equation*}
H_{n}\left( X^{et},\left( \mathbf{pt}\right) _{\#}\right) 
\simeq%
H_{n}\left( X^{et},\pi _{0}^{et}\right) 
\simeq%
\pi _{n}^{et}\left( X\right) .
\end{equation*}
\end{enumerate}
\end{conjecture}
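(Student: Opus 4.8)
The plan is to reduce both statements to computations of the left satellites $L_n\Gamma$ of the global sections functor on the site $X^{et}$, evaluated at a constant cosheaf, and then to match these satellites against the Artin--Mazur construction of the \'{e}tale homotopy type. This mirrors the pattern already set up in Conjectures \ref{Conj-Satellites-H} and \ref{Conj-Satellites-Pi} for the sites $NORM(X)$ and $OPEN(X)$, now transported to $X^{et}$.

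First I would invoke the general part (\ref{Main-constant-General}) of Theorem \ref{Main-constant}, in its form for an arbitrary small Grothendieck site rather than only for $OPEN(X)$, to identify the constant cosheaf $A_\#$ with the pro-homology cosheaf $pro\text{-}H_0(\_,A)$ and the constant cosheaf $(\mathbf{pt})_\#$ with $\pi_0^{et}=pro\text{-}\pi_0$. On the left-hand sides this immediately yields the first displayed isomorphism of part (2), namely $H_n(X^{et},(\mathbf{pt})_\#)\simeq H_n(X^{et},\pi_0^{et})$, and reduces part (1) to identifying $H_n(X^{et},pro\text{-}H_0(\_,A))$ with the \'{e}tale pro-homology $H_n^{et}(X,A)$. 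The remaining content is therefore entirely the comparison of cosheaf-theoretic satellites with the classical \'{e}tale invariants.

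Second, for the abelian part I would construct a flabby (or projective) resolution of $pro\text{-}H_0(\_,A)$ in the category of cosheaves on $X^{et}$ with values in $\mathbf{Pro}(\mathbf{Ab})$, apply $\Gamma(X^{et},\_)$, and compare the resulting satellites with the Artin--Mazur groups. The bridge is a hypercovering argument: both $H_n^{et}(X,A)$ and the cosheaf-theoretic satellites are pro-objects assembled from the \v{C}ech-type nerves of the cofiltered system of \'{e}tale hypercoverings, so a cofinality-of-resolutions argument should identify them. For the non-abelian part the same scheme applies, with $pro\text{-}\pi_0^{et}$ and the non-abelian left satellites replacing $pro\text{-}H_0(\_,A)$ and $L_n\Gamma$, matching $H_n(X^{et},(\mathbf{pt})_\#)$ with $\pi_n^{et}(X)$ through the Artin--Mazur nerve of hypercoverings.

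The hard part will be twofold. The theory of left satellites $L_n\Gamma$ — and a fortiori of the non-abelian satellites — is not yet available; it is precisely the homology theory of cosheaves promised for future papers, and establishing it (enough projective or flabby cosheaves in $\mathbf{Pro}(\mathbf{K})$, together with the requisite exactness of $\Gamma$) is the principal obstacle. Beyond that, the comparison step requires reconciling two a priori different pro-homotopy constructions, the one produced by cosheafification on $X^{et}$ and the classical Artin--Mazur one, which comes down to checking that the indexing system underlying cosheafification is cofinal among \'{e}tale hypercoverings. I expect this cofinality, and the coherence needed to make the non-abelian satellites well defined, to be the genuinely delicate points; by contrast the identification of the constant cosheaves via Theorem \ref{Main-constant} should be comparatively routine once that theorem is known for general Grothendieck sites.
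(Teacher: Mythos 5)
This statement is labelled a \emph{conjecture} in the paper, and the paper offers no proof of it anywhere (the section of proofs covers Theorems \ref{Th-Main}, \ref{Main-local-iso-pro-K}, \ref{Our-cosheaves-vs-Bredon} and \ref{Main-constant} only). So there is nothing in the paper to compare your argument against, and your proposal should be judged on its own as an attempted proof of an open statement. As such it is not a proof but a research programme: you yourself concede that the two load-bearing ingredients --- the existence of the left satellites $L_{n}\Gamma$ (enough projective or flabby cosheaves with values in $\mathbf{Pro}\left( \mathbf{K}\right)$, a workable notion of non-abelian satellite) and the cofinality comparison between the indexing system produced by cosheafification on $X^{et}$ and the Artin--Mazur system of \'{e}tale hypercoverings --- are not established. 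Deferring exactly the two steps that constitute the mathematical content means the gap is total, not partial.

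There is also a concrete error in the one step you treat as routine. You invoke ``part (\ref{Main-constant-General}) of Theorem \ref{Main-constant}, in its form for an arbitrary small Grothendieck site.'' No such form exists in the paper: Theorem \ref{Main-constant} is stated and proved only for a topological space with the site $OPEN\left( X\right)$, and its proof (via Proposition \ref{Dual-pro-p0}) leans on polyhedral expansions, local connectedness of polyhedra, and the realization of $U$ as a cokernel of its open covers in $\mathbf{Top}$ --- none of which transports to $X^{et}$. The identification $\left( \mathbf{pt}\right) _{\#}


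\simeq

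\pi _{0}^{et}$ on the \'{e}tale site is therefore itself an unproved claim of essentially the same depth as the conjecture, not a corollary of a known theorem; the author even flags that the analogous extension to the much closer site $NORM\left( X\right)$ is only ``it seems'' valid. If you want to make progress here, the first genuinely new lemma you need is an \'{e}tale analogue of Proposition \ref{Dual-pro-p0}, computing $Hom_{\mathbf{Pro}\left( \mathbf{K}\right) }\left( G\otimes _{\mathbf{Set}}\pi _{0}^{et},H\right)$ against the Verdier/Artin--Mazur hypercovering description of $X^{et}$, before any satellite machinery enters.
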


\section{Preliminaries}

\subsection{Categories}

\begin{notation}
~

\begin{enumerate}
\item We shall denote \textbf{limits} (inverse/projective limits) by $%
\underleftarrow{\lim }$, and \textbf{colimits} (direct/inductive limits) by $%
\underrightarrow{\lim }$.

\item If $U$ is an object of a category $\mathbf{C}$, we shall usually write 
$U\in \mathbf{C}$ instead of $U\in Ob\left( \mathbf{C}\right) $.
\end{enumerate}
\end{notation}

\begin{definition}
A \textbf{diagram} in $\mathbf{C}$ is a functor%
\begin{equation*}
\mathcal{D}:\mathbf{I}\longrightarrow \mathbf{C}
\end{equation*}%
where $\mathbf{I}$ is a \textbf{small} category. A \textbf{cone}
(respectively \textbf{cocone}) of the diagram $\mathcal{D}$\ is a pair $%
\left( U,\varphi \right) $ where $U\in \mathbf{C}$, and $\varphi $ is a
morphism of functors $\varphi :U^{const}\rightarrow \mathcal{D}$
(respectively $\mathcal{D}\rightarrow U^{const}$). Here $U^{const}$ is the
constant functor:%
\begin{eqnarray*}
U^{const}\left( i\right) &=&U,i\in \mathbf{I}, \\
U^{const}\left( i\rightarrow j\right) &=&\mathbf{1}_{U}.
\end{eqnarray*}
\end{definition}

\begin{remark}
We will also consider functors $\mathbf{C\rightarrow D}$ where $\mathbf{C}$
is not small. However, such functors form a \textbf{quasi-category} $\mathbf{%
D}^{\mathbf{C}}$, because the morphisms $\mathbf{D}^{\mathbf{C}}\left( 
\mathcal{F},\mathcal{G}\right) $ form a \textbf{class}, but not in general a 
\textbf{set}.
\end{remark}

\begin{definition}
\label{Def-(co)complete}A category $\mathbf{C}$ is called \textbf{complete}
if it admits small limits, and \textbf{cocomplete} if it admits small
colimits.
\end{definition}

\begin{remark}
A complete category has a \textbf{terminal} object (a limit of an empty
diagram). A cocomplete category has an \textbf{initial} object (a colimit of
an empty diagram).
\end{remark}

\begin{definition}
A functor $\mathcal{F}:\mathbf{C}\rightarrow \mathbf{D}$ is called \textbf{%
left (right) exact} if it commutes with \textbf{finite} limits (colimits). $%
\mathcal{F}$ is called \textbf{exact} if it is both left and right exact.
\end{definition}

\begin{definition}
\label{Def-(co)reflective}A subcategory $\mathbf{C\subseteq D}$ is called 
\textbf{reflective} (respectively \textbf{coreflective}) iff the inclusion $%
\mathbf{C\hookrightarrow D}$ is a right (respectively left) adjoint. The
left (respectively right) adjoint $\mathbf{D\rightarrow C}$ is called a 
\textbf{reflection} (respectively \textbf{coreflection}).
\end{definition}

\begin{definition}
Given $U\in \mathbf{C}$, let%
\begin{equation*}
h_{U}:\mathbf{C}^{op}\longrightarrow \mathbf{Set},~h^{U}:\mathbf{C}%
\longrightarrow \mathbf{Set},
\end{equation*}%
be the following functors:%
\begin{equation*}
h_{U}\left( V\right) 
{:=}%
Hom_{\mathbf{C}}\left( V,U\right) ,~h^{U}\left( V\right) 
{:=}%
Hom_{\mathbf{C}}\left( U,V\right) .
\end{equation*}
\end{definition}

\begin{remark}
The functors%
\begin{equation*}
h_{?}:\mathbf{C}\longrightarrow \mathbf{Set}^{\mathbf{C}^{op}},~h^{?}:%
\mathbf{C}^{op}\longrightarrow \mathbf{Set}^{\mathbf{C}},
\end{equation*}%
are full embeddings, called the \textbf{Yoneda embeddings}.
\end{remark}

\begin{definition}
\label{Def-Comma-U}Let $U\in \mathbf{C}$. The \textbf{comma category} $%
\mathbf{C}_{U}$ is defined as follows:%
\begin{eqnarray*}
&&Ob\left( \mathbf{C}_{U}\right) 
{:=}%
\left\{ \left( V\rightarrow U\right) \in Hom_{\mathbf{C}}\left( V,U\right)
\right\} , \\
&&Hom_{\mathbf{C}_{U}}\left( \left( \alpha _{1}:V_{1}\rightarrow U\right)
,\left( \alpha _{2}:V_{2}\rightarrow U\right) \right) 
{:=}%
\left\{ \beta :V_{1}\rightarrow V_{2}~|~\alpha _{2}\circ \beta =\alpha
_{1}\right\} .
\end{eqnarray*}
\end{definition}

\begin{definition}
\label{Def-Comma-R}Let $\mathcal{F}\in \mathbf{Set}^{\mathbf{C}^{op}}$. The 
\textbf{comma category} $\mathbf{C}_{\mathcal{F}}$ is defined as follows:%
\begin{eqnarray*}
&&Ob\left( \mathbf{C}_{\mathcal{F}}\right) 
{:=}%
\left\{ \left( V,\alpha \right) ~|~V\in \mathbf{C},\alpha \in \mathcal{F}%
\left( V\right) \right\} , \\
&&Hom_{\mathbf{C}_{U}}\left( \left( V_{1},\alpha _{1}\right) ,\left(
V_{2},\alpha _{2}\right) \right) 
{:=}%
\left\{ \beta :V_{1}\rightarrow V_{2}~|~\mathcal{F}\left( \beta \right)
\left( \alpha _{2}\right) =\alpha _{1}\right\} .
\end{eqnarray*}
\end{definition}

\begin{remark}
The categories $\mathbf{C}_{U}$ and $\mathbf{C}_{h_{U}}$ are equivalent.
\end{remark}

\subsection{Locally presentable categories}

The main reference here is \cite[Chapter 1]%
{Adamek-Rosicky-1994-Locally-presentable-categories-MR1294136}. See \cite[%
Definitions 1.1, 1.9, 1.13, and 1.17]%
{Adamek-Rosicky-1994-Locally-presentable-categories-MR1294136}.

\begin{notation}
We denote by $\aleph _{0}$ the smallest infinite cardinal.
\end{notation}

\begin{definition}
\label{Def-Locally-presentable}Let $\lambda $ be a regular cardinal, and $%
\mathbf{C}$ be a category.

\begin{enumerate}
\item A poset is called $\lambda $-\textbf{directed} provided that every
subset of cardinality smaller than $\lambda $ has an upper bound. A diagram $%
\mathcal{D}:\mathbf{I\rightarrow C}$ where $\mathbf{I}$ is a $\lambda $%
-directed poset is called a $\lambda $-\textbf{directed diagram}. A poset or
a diagram is called \textbf{directed} if it is $\aleph _{0}$-directed.

\item An object $U$ of $\mathbf{C}$ is called $\lambda $-presentable
provided that 
\begin{equation*}
h^{U}=Hom_{\mathbf{C}}\left( U,\_\right) :\mathbf{C}\longrightarrow \mathbf{%
Set}
\end{equation*}%
preserves $\lambda $-directed colimits. $U$ is called \textbf{finitely
presentable} if it is $\aleph _{0}$-presentable.

\item $\mathbf{C}$ is called \textbf{locally} $\lambda $-\textbf{presentable}
provided that it is cocomplete, and has a \textbf{set} $A$ of $\lambda $%
-presentable objects such that every object is a $\lambda $-directed colimit
of objects from $A$. $\mathbf{C}$ is called \textbf{locally} \textbf{%
presentable} if it is locally $\lambda $-presentable for some regular
cardinal $\lambda $. $\mathbf{C}$ is called \textbf{locally finitely
presentable} if it is locally $\aleph _{0}$-presentable.
\end{enumerate}
\end{definition}

\begin{remark}
The notions above can be equivalently defined using more general $\lambda $-%
\textbf{filtered} diagrams: a small category $\mathbf{I}$ is called $\lambda 
$-filtered provided that each subcategory with less than $\lambda $
morphisms has a cocone in $\mathbf{I}$. This means that:

\begin{enumerate}
\item $\mathbf{I}$ is non-empty.

\item For each collection $I_{s},s\in S$, of less than $\lambda $ objects of 
$\mathbf{I}$ there exists an object $J$ and morphisms $f_{s}:I_{s}%
\rightarrow J,s\in S$, in $\mathbf{I}$.

\item For each collection $g_{s}:I_{1}\rightarrow I_{2},s\in S$, of less
than $\lambda $ morphisms in $\mathbf{I}$ there exists a morphism $%
f:I_{2}\rightarrow J$ in $\mathbf{I}$ with $f\circ g_{s}$ independent of $s$.
\end{enumerate}

A diagram $\mathcal{D}:\mathbf{I\rightarrow C}$ is called $\lambda $%
-filtered if $\mathbf{I}$ is a $\lambda $-filtered category.
\end{remark}

\begin{remark}
\label{Rem-Locally-presentable}See \cite[Remark 1.19]%
{Adamek-Rosicky-1994-Locally-presentable-categories-MR1294136}. A category
is locally $\lambda $-presentable iff the following two conditions are
satisfied:

\begin{enumerate}
\item Every object is a $\lambda $-directed (equivalently: $\lambda $%
-filtered) colimit of $\lambda $-presentable objects.

\item There exists, up to an isomorphism, only a set of $\lambda $%
-presentable objects.
\end{enumerate}

By $Pres_{\lambda }\mathbf{C}$ we will denote a \textbf{set} of
representatives for the isomorphism classes of $\lambda $-presentable
objects of $\mathbf{C}$.
\end{remark}

\subsection{Pro-objects}

The main reference is \cite[Chapter 6]{Kashiwara-Categories-MR2182076}.

\begin{definition}
A category $\mathbf{I}$ is called \textbf{filtered} if $\mathbf{I}$ is $%
\aleph _{0}$-filtered. A category $\mathbf{I}$ is called \textbf{cofiltered}
if $\mathbf{I}^{op}$ is filtered. A diagram $\mathcal{D}:\mathbf{%
I\rightarrow K}$ is called (co)filtered if $\mathbf{I}$ is a (co)filtered
category.
\end{definition}

\begin{remark}
In \cite{Kashiwara-Categories-MR2182076}, such categories and diagrams are
called \textbf{(co)filtrant}.
\end{remark}

\begin{definition}
\label{Def-Pro-C}Let $\mathbf{K}$ be a category. The pro-category $\mathbf{%
Pro}\left( \mathbf{K}\right) $ (see \cite[Definition 6.1.1]%
{Kashiwara-Categories-MR2182076}, \cite[Remark I.1.4]%
{Mardesic-Segal-MR676973}, or \cite[Appendix]{Artin-Mazur-MR883959}) is the
category $\mathbf{L}^{op}$ where $\mathbf{L}\subseteq \mathbf{Set}^{\mathbf{K%
}}$ is the full subcategory of functors that are filtered colimits of
representable functors, i.e. colimits of diagrams of the form%
\begin{equation*}
\mathbf{I}^{op}\overset{\mathcal{X}^{op}}{\longrightarrow }\mathbf{K}^{op}%
\overset{h^{?}}{\longrightarrow }\mathbf{Set}^{\mathbf{K}}
\end{equation*}%
where $\mathbf{I}$ is a cofiltered category, $\mathcal{X}:\mathbf{I}%
\rightarrow \mathbf{K}$ is a diagram, and $h^{?}$ is the second Yoneda
embedding. We will simply denote such diagrams by $\mathcal{X}=\left(
X_{i}\right) _{i\in \mathbf{I}}$.

Let two pro-objects be defined by the diagrams $\mathcal{X}=\left(
X_{i}\right) _{i\in \mathbf{I}}$ and $\mathcal{Y}=\left( Y_{j}\right) _{j\in 
\mathbf{J}}$. Then%
\begin{equation*}
Hom_{\mathbf{Pro}\left( \mathbf{C}\right) }\left( \mathcal{X},\mathcal{Y}%
\right) =\underleftarrow{\lim }_{j\in \mathbf{J}}\underrightarrow{\lim }%
_{i\in \mathbf{I}}Hom_{\mathbf{C}}\left( X_{i},Y_{j}\right) .
\end{equation*}
\end{definition}

\begin{remark}
$\mathbf{Pro}\left( \mathbf{K}\right) $ is indeed a category even though $%
\mathbf{Set}^{\mathbf{K}}$ is a quasi-category: $Hom_{\mathbf{Pro}\left( 
\mathbf{K}\right) }\left( \mathcal{X},\mathcal{Y}\right) $ is a \textbf{set}
for any $\mathcal{X}$ and $\mathcal{Y}$.
\end{remark}

\begin{remark}
\label{Rem-Trivial-pro-object}\label{Rem-Rudimentary}The category $\mathbf{K}
$ is a full subcategory of $\mathbf{Pro}\left( \mathbf{K}\right) $: any
object $X\in \mathbf{K}$ gives rise to a \textbf{rudimentary} pro-object%
\begin{equation*}
\left( \mathbf{\ast }\longmapsto X\right) \in \mathbf{Pro}\left( \mathbf{K}%
\right) \mathbf{.}
\end{equation*}
\end{remark}

\subsection{Pro-homotopy and pro-homology}

Let $\mathbf{Top}\ $be the category of topological spaces and continuous
mappings. There are following categories closely connected to $\mathbf{Top}$%
: the category $H\left( \mathbf{Top}\right) $ of homotopy types, the
category $\mathbf{Pro}\left( H\left( \mathbf{Top}\right) \right) $ of
pro-homotopy types, and the category $H\left( \mathbf{Pro}\left( \mathbf{Top}%
\right) \right) $ of homotopy types of pro-spaces. The latter category is
used in \emph{strong shape theory}. It is finer than the former which is
used in \emph{shape theory}. The pointed versions $\mathbf{Pro}\left(
H\left( \mathbf{Top}_{\ast }\right) \right) $ and $H\left( \mathbf{Pro}%
\left( \mathbf{Top}_{\ast }\right) \right) $ are defined similarly.

One of the most important tools in strong shape theory is a \emph{strong
expansion} (see \cite{Mardesic-MR1740831}, conditions (S1) and (S2) on p.
129). In this paper, it is sufficient to use a weaker notion: an $H\left( 
\mathbf{Top}\right) $-\emph{expansion} (\cite[\S I.4.1]%
{Mardesic-Segal-MR676973}, conditions (E1) and (E2)). Those two conditions
are equivalent to the following

\begin{definition}
\label{HTOP-extension}Let $X$ be a topological space. A morphism $%
X\rightarrow \left( Y_{j}\right) _{j\in \mathbf{I}}$ in $\mathbf{Pro}\left(
H\left( \mathbf{Top}\right) \right) $ is called an $H\left( \mathbf{Top}%
\right) $-expansion (or simply \textbf{expansion}) if for any polyhedron $P$
the following mapping%
\begin{equation*}
\underrightarrow{\lim }_{j}\left[ Y_{j},P\right] =\underrightarrow{\lim }%
_{j}Hom_{H\left( \mathbf{Top}\right) }\left( Y_{j},P\right) \longrightarrow
Hom_{H\left( \mathbf{Top}\right) }\left( X,P\right) =\left[ X,P\right]
\end{equation*}%
is bijective where $\left[ Z,P\right] $ is the set of homotopy classes of
continuous mappings from $Z$ to $P$.

An expansion is called \textbf{polyhedral} (or an $H\left( \mathbf{Pol}%
\right) $-expansion) if all $Y_{j}$ are polyhedra.
\end{definition}

\begin{remark}
The pointed version of this notion (an $H\left( \mathbf{Pol}_{\ast }\right) $%
-expansion) is defined similarly.
\end{remark}

\begin{definition}
\label{Def-Normal-covering}An open covering is called \textbf{normal} \cite[%
\S I.6.2]{Mardesic-Segal-MR676973}, iff there is a partition of unity
subordinated to it.
\end{definition}

\begin{remark}
Theorem 8 from \cite[App.1, \S 3.2]{Mardesic-Segal-MR676973}, shows that an $%
H\left( \mathbf{Pol}\right) $- or an $H\left( \mathbf{Pol}_{\ast }\right) $%
-expansion for $X$ can be constructed using nerves of normal (see Definition %
\ref{Def-Normal-covering}) open coverings of $X$.
\end{remark}

Pro-homotopy is defined in \cite[p. 121]{Mardesic-Segal-MR676973}:

\begin{definition}
\label{Pro-homotopy-groups}For a (pointed) topological space $X$, define its
pro-homotopy pro-sets%
\begin{equation*}
pro\text{-}\pi _{n}\left( X\right) 
{:=}%
\left( \pi _{n}\left( Y_{j}\right) \right) _{j\in \mathbf{J}}
\end{equation*}%
where $X\rightarrow \left( Y_{j}\right) _{j\in \mathbf{J}}$ is an $H\left( 
\mathbf{Pol}\right) $-expansion if $n=0$, and an $H\left( \mathbf{Pol}_{\ast
}\right) $-expansion if $n\geq 1$.
\end{definition}

\begin{remark}
Similar to the \textquotedblleft usual\textquotedblright\ algebraic
topology, $pro$-$\pi _{0}$ is a pro-set (an object of $\mathbf{Pro}\left( 
\mathbf{Set}\right) $), $pro$-$\pi _{1}$ is a pro-group (an object of $%
\mathbf{Pro}\left( \mathbf{Gr}\right) $), and $pro$-$\pi _{n}$ are abelian
pro-groups (objects of $\mathbf{Pro}\left( \mathbf{Ab}\right) $) for $n\geq
2 $.
\end{remark}

Pro-homology groups are defined in \cite[\S II.3.2]{Mardesic-Segal-MR676973}%
, as follows:

\begin{definition}
\label{Pro-homology-groups}For a topological space $X$, and an abelian group 
$A$, define its pro-homology groups as%
\begin{equation*}
pro\text{-}H_{n}\left( X,A\right) 
{:=}%
\left( H_{n}\left( Y_{j},A\right) \right) _{j\in \mathbf{J}}
\end{equation*}%
where $X\rightarrow \left( Y_{j}\right) _{j\in \mathbf{J}}$ is an $H\left( 
\mathbf{Pol}\right) $-expansion.
\end{definition}

\section{Main results}

The proofs of the statements from this section will be given in Section \ref%
{Section-Proofs}.

\subsection{General sites}

Let $X=\left( \mathbf{C}_{X},Cov\left( X\right) \right) $ be a small site
(Definition \ref{Def-Site}), and let $\mathbf{K}$ be a category. By $\mathbf{%
Pro}\left( \mathbf{K}\right) $ we denote the corresponding pro-category
(Definition \ref{Def-Pro-C}).

Let $\mathbf{pCS}\left( X,\mathbf{K}\right) =\mathbf{K}^{\mathbf{C}_{X}}$ be
the category of precosheaves on $X$ with values in $\mathbf{K}$, and let $%
\mathbf{CS}\left( X,\mathbf{K}\right) $ (if $\mathbf{K}$ is cocomplete) be
the full subcategory of cosheaves (Notation \ref{Not-(Pre)cosheaves}).

\begin{theorem}
\label{Th-Main}~

\begin{enumerate}
\item \label{Th-Main-K-locally-presentable}If $\mathbf{K}$ is locally
presentable (Definition \ref{Def-Locally-presentable}), then $\mathbf{CS}%
\left( X,\mathbf{K}\right) \subseteq \mathbf{pCS}\left( X,\mathbf{K}\right) $
is a coreflective subcategory.

\item \label{Th-Main-K-op-locally-presentable}If $\mathbf{K}^{op}$ is
locally presentable, then $\mathbf{CS}\left( X,\mathbf{K}\right) \subseteq 
\mathbf{pCS}\left( X,\mathbf{K}\right) $ is a coreflective subcategory.

\item \label{Th-Main-K-op-locally-finitely-presentable}If $\mathbf{K}^{op}$
is locally finitely presentable, then the coreflection $\mathbf{pCS}\left( X,%
\mathbf{K}\right) \rightarrow \mathbf{CS}\left( X,\mathbf{K}\right) $ is
given by%
\begin{equation*}
\mathcal{A}\longmapsto \left( \mathcal{A}\right) _{\#}^{\mathbf{K}}=\left( 
\mathcal{A}\right) _{++}^{\mathbf{K}}
\end{equation*}%
(see Definition \ref{Def-Plus-cosheaf}).

\item \label{Th-Main-Pro(K)-K-(co)complete}Assume $\mathbf{K}$ is
cocomplete, and $\mathcal{A}\in \mathbf{pCS}\left( X,\mathbf{K}\right) $.
Then:

\begin{enumerate}
\item \label{Th-Main-Pro(K)-K-(co)complete-cosheaf}$\mathcal{A}$ is
coseparated (a cosheaf) iff it is coseparated (a cosheaf) when considered as
a precosheaf with values in $\mathbf{Pro}\left( \mathbf{K}\right) $.

\item 
\begin{equation*}
\mathbf{CS}\left( X,\mathbf{Pro}\left( \mathbf{K}\right) \right) \subseteq 
\mathbf{pCS}\left( X,\mathbf{Pro}\left( \mathbf{K}\right) \right)
\end{equation*}%
is coreflective, and the coreflection 
\begin{equation*}
\mathbf{pCS}\left( X,\mathbf{Pro}\left( \mathbf{K}\right) \right)
\longrightarrow \mathbf{CS}\left( X,\mathbf{Pro}\left( \mathbf{K}\right)
\right)
\end{equation*}%
is given by%
\begin{equation*}
\mathcal{A}\longmapsto \left( \mathcal{A}\right) _{\#}^{\mathbf{Pro}\left( 
\mathbf{K}\right) }=\left( \mathcal{A}\right) _{++}^{\mathbf{Pro}\left( 
\mathbf{K}\right) }.
\end{equation*}
\end{enumerate}
\end{enumerate}
\end{theorem}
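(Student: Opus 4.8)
The plan is to exploit the duality between precosheaves on $X$ with values in $\mathbf{K}$ and presheaves on $X$ with values in $\mathbf{K}^{op}$: since $\mathbf{pCS}\left(X,\mathbf{K}\right)=\mathbf{K}^{\mathbf{C}_{X}}$ is dual to the presheaf category $\left(\mathbf{K}^{op}\right)^{\mathbf{C}_{X}^{op}}$, and under this duality the cosheaf condition (a coequalizer/colimit condition over each covering) corresponds to the sheaf condition, a coreflection $\mathbf{pCS}\rightarrow\mathbf{CS}$ is exactly a reflection on the dual sheaf side. Part (\ref{Th-Main-K-op-locally-presentable}), whose idea is due to Casacuberta, then follows at once from the classical fact that $\mathbf{K}^{op}$-valued sheaves form a reflective (indeed locally presentable) subcategory of the $\mathbf{K}^{op}$-valued presheaves whenever $\mathbf{K}^{op}$ is locally presentable: the inclusion $\mathbf{Sh}\hookrightarrow\mathbf{pSh}$ is a right adjoint, so passing to opposites turns the sheafification into a right adjoint to the inclusion $\mathbf{CS}\left(X,\mathbf{K}\right)\hookrightarrow\mathbf{pCS}\left(X,\mathbf{K}\right)$, i.e.\ the desired coreflection.

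Part (\ref{Th-Main-K-locally-presentable}) cannot be obtained this way, because $\mathbf{K}$ locally presentable does not make $\mathbf{K}^{op}$ locally presentable; I would argue directly. First, $\mathbf{pCS}\left(X,\mathbf{K}\right)=\mathbf{K}^{\mathbf{C}_{X}}$ is locally presentable, being a functor category from a small category into a locally presentable one (Definition \ref{Def-Locally-presentable}). Next, because the cosheaf condition asks a functor to send a small set of distinguished cocones to colimit cocones, and colimits commute with colimits, $\mathbf{CS}\left(X,\mathbf{K}\right)$ is closed under all colimits in $\mathbf{pCS}\left(X,\mathbf{K}\right)$; hence it is cocomplete and the inclusion is cocontinuous. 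The key lemma is that the full subcategory of $\mathbf{K}^{\mathbf{C}_{X}}$ of functors preserving a prescribed small set of colimits is accessible and accessibly embedded (a standard fact about categories of sketch models valued in a locally presentable category). Granting this, $\mathbf{CS}\left(X,\mathbf{K}\right)$ is accessible and cocomplete, hence locally presentable, and a cocontinuous functor between locally presentable categories automatically has a right adjoint; that right adjoint is the sought coreflection.

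For part (\ref{Th-Main-K-op-locally-finitely-presentable}) I would return to the dual sheaf picture: when $\mathbf{K}^{op}$ is locally finitely presentable its filtered colimits commute with finite limits, which is precisely what makes the classical plus construction for sheaves converge after two iterations. Dualizing the two iterations of plus gives the two-step cosheafification $\mathcal{A}\mapsto\mathcal{A}_{+}\mapsto\mathcal{A}_{++}=\mathcal{A}_{\#}$ of Definition \ref{Def-Plus-cosheaf}; the work is to check that the dual plus functor, built from a cofiltered limit over the refinements of coverings of the coequalizers $H_{0}\left(R,\mathcal{A}\right)$, coseparates after one application and yields a cosheaf after the second, which is the exact transcription of the Grothendieck argument under the finitary hypothesis (now in the form: cofiltered limits commute with finite colimits in $\mathbf{K}$).

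Finally, for part (\ref{Th-Main-Pro(K)-K-(co)complete}) only cocompleteness of $\mathbf{K}$ is assumed, and the good behaviour of $\mathbf{Pro}\left(\mathbf{K}\right)$ does the work. Part (\ref{Th-Main-Pro(K)-K-(co)complete-cosheaf}) follows because the canonical full embedding $\mathbf{K}\hookrightarrow\mathbf{Pro}\left(\mathbf{K}\right)$ of Remark \ref{Rem-Rudimentary} preserves all colimits existing in $\mathbf{K}$ (being dual to the finite-colimit- and limit-preserving embedding $\mathbf{K}^{op}\hookrightarrow\mathbf{Ind}\left(\mathbf{K}^{op}\right)$) and, being fully faithful, reflects them; since the cosheaf condition is a colimit condition and $\mathbf{K}$ is cocomplete, $\mathcal{A}$ satisfies it in $\mathbf{K}$ iff it does in $\mathbf{Pro}\left(\mathbf{K}\right)$. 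For part (b) I would run exactly the plus--plus construction of part (\ref{Th-Main-K-op-locally-finitely-presentable}), now internal to $\mathbf{Pro}\left(\mathbf{K}\right)$: the point is that in $\mathbf{Pro}\left(\mathbf{K}\right)$ cofiltered limits are exact, commuting with the finite colimits appearing in $H_{0}\left(R,\mathcal{A}\right)$, because pro-objects are formal cofiltered limits; so the finitary argument of part (\ref{Th-Main-K-op-locally-finitely-presentable}) applies verbatim even though $\mathbf{K}$ itself is only cocomplete. I expect this last point---verifying that the cofiltered limit over refinements interacts correctly with the coequalizers inside $\mathbf{Pro}\left(\mathbf{K}\right)$, so that $\mathcal{A}_{++}$ is genuinely a cosheaf with the correct universal property---to be the main obstacle, since it is exactly the failure of this commutation in ordinary categories that the whole passage to $\mathbf{Pro}\left(\mathbf{K}\right)$ is designed to repair.
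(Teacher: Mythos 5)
Your proposal follows the paper's own route in all four parts: part (\ref{Th-Main-K-locally-presentable}) by realizing $\mathbf{CS}\left( X,\mathbf{K}\right) $ as the category of models of a colimit-sketch (hence accessible and, being cocomplete, locally presentable) and applying an adjoint functor theorem to the colimit-preserving inclusion; parts (\ref{Th-Main-K-op-locally-presentable})--(\ref{Th-Main-K-op-locally-finitely-presentable}) by dualizing to the reflectivity and plus-construction results for sheaves with values in $\mathbf{K}^{op}$; and part (\ref{Th-Main-Pro(K)-K-(co)complete}) via the colimit-preserving full embedding $\mathbf{K}\hookrightarrow \mathbf{Pro}\left( \mathbf{K}\right) $ together with the plus--plus construction internal to $\mathbf{Pro}\left( \mathbf{K}\right) $, where cofiltered limits commute with finite colimits. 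The only point you leave implicit is the coseparated half of (\ref{Th-Main-Pro(K)-K-(co)complete-cosheaf}): that an epimorphism of $\mathbf{K}$ remains an epimorphism in $\mathbf{Pro}\left( \mathbf{K}\right) $ is not a formal consequence of full faithfulness alone --- the paper derives it from the detection of epimorphisms in $\mathbf{Pro}\left( \mathbf{K}\right) $ by rudimentary objects (Lemma \ref{Lemma-Epi-in-Pro(K)}), although it also follows from your colimit-preservation argument once one invokes the pushout characterization of epimorphisms in the cocomplete category $\mathbf{K}$.
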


\begin{corollary}
Assume that either $\mathbf{K}$ or $\mathbf{K}^{op}$ is locally presentable.
Then%
\begin{equation*}
\mathbf{CS}\left( X,\mathbf{Pro}\left( \mathbf{K}\right) \right) \subseteq 
\mathbf{pCS}\left( X,\mathbf{Pro}\left( \mathbf{K}\right) \right)
\end{equation*}%
is coreflective, and the coreflection 
\begin{equation*}
\mathbf{pCS}\left( X,\mathbf{Pro}\left( \mathbf{K}\right) \right)
\longrightarrow \mathbf{CS}\left( X,\mathbf{Pro}\left( \mathbf{K}\right)
\right)
\end{equation*}%
is given by%
\begin{equation*}
\mathcal{A}\longmapsto \left( \mathcal{A}\right) _{\#}^{\mathbf{Pro}\left( 
\mathbf{K}\right) }=\left( \mathcal{A}\right) _{++}^{\mathbf{Pro}\left( 
\mathbf{K}\right) }.
\end{equation*}
\end{corollary}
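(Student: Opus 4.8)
The plan is to read off the corollary as a direct consequence of part (\ref{Th-Main-Pro(K)-K-(co)complete}) of Theorem \ref{Th-Main}, whose \emph{only} standing hypothesis on $\mathbf{K}$ is cocompleteness. Indeed, the conclusion to be proved---that $\mathbf{CS}\left( X,\mathbf{Pro}\left( \mathbf{K}\right) \right) \subseteq \mathbf{pCS}\left( X,\mathbf{Pro}\left( \mathbf{K}\right) \right)$ is coreflective, with coreflection given by the two-step assignment $\mathcal{A}\longmapsto \left( \mathcal{A}\right) _{\#}^{\mathbf{Pro}\left( \mathbf{K}\right) }=\left( \mathcal{A}\right) _{++}^{\mathbf{Pro}\left( \mathbf{K}\right) }$---is verbatim the statement of that part of the theorem. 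Hence the entire task reduces to checking that each of the two hypotheses, namely $\mathbf{K}$ locally presentable or $\mathbf{K}^{op}$ locally presentable, forces $\mathbf{K}$ to be cocomplete. Once that is in hand, there is nothing left to construct or verify.

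First I would dispose of the case in which $\mathbf{K}$ itself is locally presentable. By Definition \ref{Def-Locally-presentable}, a locally presentable category is cocomplete by fiat, so Theorem \ref{Th-Main}(\ref{Th-Main-Pro(K)-K-(co)complete}) applies with no further argument. The slightly less immediate case is $\mathbf{K}^{op}$ locally presentable. Here I would invoke the standard fact (\cite[Corollary 1.28]{Adamek-Rosicky-1994-Locally-presentable-categories-MR1294136}) that every locally presentable category is not only cocomplete but also \emph{complete}. Applying this to $\mathbf{K}^{op}$ shows $\mathbf{K}^{op}$ is complete, and dualizing yields that $\mathbf{K}$ is cocomplete. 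Thus in both cases $\mathbf{K}$ is cocomplete, and Theorem \ref{Th-Main}(\ref{Th-Main-Pro(K)-K-(co)complete}) delivers the coreflection together with its explicit description.

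There is essentially no genuine obstacle, since the substance has already been absorbed into Theorem \ref{Th-Main}. The one point that repays attention is the asymmetry between the two cases: when $\mathbf{K}$ is locally presentable, cocompleteness is immediate from the definition, whereas when $\mathbf{K}^{op}$ is locally presentable one must pass through the completeness of locally presentable categories and then dualize. I would therefore be careful to cite the completeness result explicitly, because the bare hypothesis ``$\mathbf{K}^{op}$ locally presentable'' guarantees only that $\mathbf{K}$ is complete; without the extra fact that locally presentable categories are complete as well as cocomplete, one could not conclude that $\mathbf{K}$ is cocomplete, and Theorem \ref{Th-Main}(\ref{Th-Main-Pro(K)-K-(co)complete}) would fail to apply.
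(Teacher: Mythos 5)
Your proof is correct and follows exactly the paper's own route: reduce to Theorem \ref{Th-Main}(\ref{Th-Main-Pro(K)-K-(co)complete}) by checking that either hypothesis forces $\mathbf{K}$ to be cocomplete, using the definition in one case and the completeness of locally presentable categories (dualized) in the other. The only difference is the citation used for completeness of locally presentable categories, which is immaterial.
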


\begin{proof}
If $\mathbf{K}$ is locally presentable, it is both complete and cocomplete 
\cite[Corollary 2.47]%
{Adamek-Rosicky-1994-Locally-presentable-categories-MR1294136}. If $\mathbf{K%
}^{op}$ is locally presentable, then again $\mathbf{K}$ is both complete and
cocomplete. The statement follows from Theorem \ref{Th-Main} (\ref%
{Th-Main-Pro(K)-K-(co)complete}).
\end{proof}

\subsection{Topological spaces}

Throughout this subsection, $X$ is a topological space considered as the
site $OPEN\left( X\right) $ (see Example \ref{Site-TOP} and Remark \ref%
{Denote-standard-site-simply}).

\begin{theorem}
\label{Main-local-iso-pro-K}Let $\mathbf{K}$ be a cocomplete category.

\begin{enumerate}
\item For any precosheaf $\mathcal{A}$ on $X$ with values in $\mathbf{Pro}%
\left( \mathbf{K}\right) $, $\left( \mathcal{A}\right) _{\#}^{\mathbf{Pro}%
\left( \mathbf{K}\right) }\rightarrow \mathcal{A}$ is a strong local
isomorphism (Definition \ref{local-isomorphism}).

\item Any strong local isomorphism $\mathcal{A}\rightarrow \mathcal{B}$
between cosheaves on $X$ with values in $\mathbf{Pro}\left( \mathbf{K}%
\right) $, is an isomorphism.

\item If $\mathcal{B}\rightarrow \mathcal{A}$ is a strong local isomorphism,
and $\mathcal{B}$ is a cosheaf, then the natural morphism $\mathcal{B}%
\rightarrow \left( \mathcal{A}\right) _{\#}^{\mathbf{Pro}\left( \mathbf{K}%
\right) }$ is an isomorphism.
\end{enumerate}
\end{theorem}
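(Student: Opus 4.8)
The plan is to prove the three parts in order, deriving (3) formally from (1) and (2) together with the universal property of the coreflection furnished by Theorem~\ref{Th-Main}(\ref{Th-Main-Pro(K)-K-(co)complete}). Throughout I would first record two easy stability properties of strong local isomorphisms (Definition~\ref{local-isomorphism}) directly from the definition: closure under composition, and the two-out-of-three property in the form ``if $g\circ f$ and $g$ are strong local isomorphisms then so is $f$.''

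For part (1), I would unwind the explicit two-step construction $\mathcal{A}\mapsto \mathcal{A}_{+}\mapsto \mathcal{A}_{++}=\mathcal{A}_{\#}$ of Definition~\ref{Def-Plus-cosheaf}. Each application of the plus functor carries a natural comparison map to its argument, so that $\mathcal{A}_{\#}\to \mathcal{A}$ factors as $\mathcal{A}_{++}\to \mathcal{A}_{+}\to \mathcal{A}$. The crux is to show that a single plus step $\mathcal{A}_{+}\to \mathcal{A}$ is already a strong local isomorphism: since $\mathcal{A}_{+}(U)$ is assembled from the values of $\mathcal{A}$ on the members of coverings of $U$, the construction by design alters only the global gluing behaviour and not the germs, so the comparison map becomes an isomorphism after restriction to a neighbourhood basis at each point, which is exactly the strong local isomorphism condition. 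Here the choice of target $\mathbf{Pro}\left(\mathbf{K}\right)$ is essential: the cofiltered limits over refinements of coverings are well behaved in $\mathbf{Pro}\left(\mathbf{K}\right)$ (unlike in $\mathbf{K}$), so the local data is genuinely preserved. Closure under composition then gives that $\mathcal{A}_{\#}\to \mathcal{A}$ is a strong local isomorphism.

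Part (2) is the rigidity heart of the theorem. Given a strong local isomorphism $\phi:\mathcal{A}\to \mathcal{B}$ between cosheaves, I would build a two-sided inverse openwise. For each open $U$, the cosheaf condition exhibits both $\mathcal{A}\left(U\right)$ and $\mathcal{B}\left(U\right)$ as the same coequalizer, over any covering $\mathcal{U}$ of $U$, of the values on the members of $\mathcal{U}$ and their intersections. Choosing a covering on which $\phi$ is an isomorphism (which exists by hypothesis) and passing to these colimits, the naturality of $\phi$ forces $\phi\left(U\right)$ to be an isomorphism; assembling over all $U$ shows $\phi$ is an isomorphism of precosheaves. The delicate point is that the coverings witnessing the local isomorphism must be chosen compatibly enough for the colimit comparison to go through, and it is again the good behaviour of cofiltered limits in $\mathbf{Pro}\left(\mathbf{K}\right)$ that makes this legitimate.

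Finally, for part (3), suppose $\psi:\mathcal{B}\to \mathcal{A}$ is a strong local isomorphism with $\mathcal{B}$ a cosheaf. Since $\mathcal{B}$ is a cosheaf, the universal property of the coreflection (Theorem~\ref{Th-Main}) yields a unique factorization $\mathcal{B}\to \left(\mathcal{A}\right)_{\#}^{\mathbf{Pro}\left(\mathbf{K}\right)}\to \mathcal{A}$ of $\psi$ through the natural map of part (1). That natural map is a strong local isomorphism by part (1), and $\psi$ is one by hypothesis, so the two-out-of-three property makes $\mathcal{B}\to \left(\mathcal{A}\right)_{\#}^{\mathbf{Pro}\left(\mathbf{K}\right)}$ a strong local isomorphism between cosheaves, whence part (2) upgrades it to an isomorphism. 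I expect the main obstacle to lie in part (1)—specifically in making precise that the plus construction changes nothing locally—since this is where the explicit form of Definition~\ref{Def-Plus-cosheaf} and the exactness of limits in $\mathbf{Pro}\left(\mathbf{K}\right)$ must be combined with care.
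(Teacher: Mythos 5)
Your part (3) and the overall architecture (prove (1) and (2), then deduce (3) from the universal property of the coreflection together with two-out-of-three) match the paper exactly. The problems are in how you propose to establish (1) and, more seriously, (2).

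In part (2) you propose to "choose a covering on which $\phi$ is an isomorphism, which exists by hypothesis." It does not exist. A strong local isomorphism (Definition \ref{local-isomorphism}) induces isomorphisms of costalks $\left( \mathcal{A}\right) _{\mathbf{Pro}\left( \mathbf{K}\right) }^{x}\rightarrow \left( \mathcal{B}\right) _{\mathbf{Pro}\left( \mathbf{K}\right) }^{x}$, and these costalks are pro-objects indexed by the neighbourhood filter of $x$; an isomorphism between them in $\mathbf{Pro}\left( \mathbf{K}\right) $ is a pro-inverse that shifts indices, and it yields no open set $V\ni x$ on which $\phi \left( V\right) $ itself is invertible (Examples \ref{Ex-Non-smooth-precosheaf} and \ref{Converging-sequence} show the costalks are genuinely non-rudimentary, so the levelwise maps need not be isomorphisms on any open set). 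Hence the covering your coequalizer comparison requires is not available, and this step collapses. The paper's proof of (2) goes in the opposite direction: it applies $Hom_{\mathbf{Pro}\left( \mathbf{K}\right) }\left( \_,G\right) $ for $G\in \mathbf{K}$, uses Propositions \ref{Prop-Strong-local-equivalence} and \ref{Prop-Hom(K,G)} to convert the problem into a stalkwise isomorphism between sheaves of \emph{sets}, invokes the classical fact that such a map is an isomorphism, and concludes because $\left( \mathbf{Pro}\left( \mathbf{K}\right) \right) ^{op}$ is a full subcategory of $\mathbf{Set}^{\mathbf{K}}$.

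In part (1), the assertion that a single plus step $\mathcal{A}_{+}\rightarrow \mathcal{A}$ "by design alters only the global gluing behaviour and not the germs" is the entire content of the claim and is left unproved: the costalk is a cofiltered limit over $J\left( x\right) $, while $\mathcal{A}_{+}\left( U\right) $ is a cofiltered limit over covering sieves of colimits over those sieves, and identifying the resulting limits requires an actual argument. The paper sidesteps the plus construction entirely by corepresenting the costalk functor: the pointed precosheaf $\mathcal{P}_{x,G}$ of Proposition \ref{Prop-Cosheafification-is-local-iso} is itself a cosheaf with $Hom\left( \mathcal{P}_{x,G},\mathcal{C}\right) \simeq Hom_{\mathbf{K}}\left( G,\mathcal{C}^{x}\right) $, so the coreflection adjunction gives $\mathcal{A}^{x}\simeq \left( \mathcal{A}_{\#}\right) ^{x}$ at once. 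Your route could be repaired (a cosheaf maps to $\mathcal{A}_{+}$ and to $\mathcal{A}$ by the same data, so the same corepresentability argument applies to a single plus step), but as written it is an appeal to intuition rather than a proof.
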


\begin{definition}
\label{Def-smooth}A precosheaf $\mathcal{A}$ is called \textbf{smooth} (\cite%
[Corollary VI.3.2 and Definition VI.3.4]{Bredon-Book-MR1481706}, or \cite[%
Corollary 3.5 and Definition 3.7]{Bredon-MR0226631}), iff there exist
precosheaves $\mathcal{B}$ and $\mathcal{B}^{\prime }$, a cosheaf $\mathcal{C%
}$, and strong local isomorphisms $\mathcal{A}\rightarrow \mathcal{B}%
\leftarrow \mathcal{C}$, or, equivalently, strong local isomorphisms $%
\mathcal{A}\leftarrow \mathcal{B}^{\prime }\rightarrow \mathcal{C}$.
\end{definition}

Corollary \ref{Corollary-smooth} below guarantees that all precosheaves with
values in $\mathbf{Pro}\left( \mathbf{K}\right) $ are smooth:

\begin{corollary}
\label{Corollary-smooth}Let $\mathbf{K}$ be a cocomplete category. Any
precosheaf with values in $\mathbf{Pro}\left( \mathbf{K}\right) $ is smooth.
\end{corollary}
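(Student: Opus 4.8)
The plan is to deduce the corollary directly from Theorem \ref{Main-local-iso-pro-K}, using the explicit cosheafification functor furnished by Theorem \ref{Th-Main}. Fix a precosheaf $\mathcal{A}$ on $X$ with values in $\mathbf{Pro}\left( \mathbf{K}\right) $. Since $\mathbf{K}$ is cocomplete, Theorem \ref{Th-Main} (\ref{Th-Main-Pro(K)-K-(co)complete}) applies: the inclusion $\mathbf{CS}\left( X,\mathbf{Pro}\left( \mathbf{K}\right) \right) \subseteq \mathbf{pCS}\left( X,\mathbf{Pro}\left( \mathbf{K}\right) \right) $ is coreflective, and the coreflection sends $\mathcal{A}$ to $\left( \mathcal{A}\right) _{\#}^{\mathbf{Pro}\left( \mathbf{K}\right) }$. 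In particular $\left( \mathcal{A}\right) _{\#}^{\mathbf{Pro}\left( \mathbf{K}\right) }$ is an object of $\mathbf{CS}\left( X,\mathbf{Pro}\left( \mathbf{K}\right) \right) $, hence a cosheaf.

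Next I would invoke Theorem \ref{Main-local-iso-pro-K}(1), which asserts that the counit $\left( \mathcal{A}\right) _{\#}^{\mathbf{Pro}\left( \mathbf{K}\right) }\rightarrow \mathcal{A}$ is a strong local isomorphism. This is exactly the data needed to exhibit $\mathcal{A}$ as smooth in the sense of Definition \ref{Def-smooth}. I set $\mathcal{C}:=\left( \mathcal{A}\right) _{\#}^{\mathbf{Pro}\left( \mathbf{K}\right) }$ and $\mathcal{B}:=\mathcal{A}$, and I take the span $\mathcal{A}\overset{\mathbf{1}_{\mathcal{A}}}{\rightarrow }\mathcal{B}\leftarrow \mathcal{C}$, where the left arrow is the identity of $\mathcal{A}$ and the right arrow is the counit. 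Here $\mathcal{C}$ is a cosheaf, as established in the previous paragraph, so this span has precisely the shape required by Definition \ref{Def-smooth}.

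The only remaining points are to verify that both legs of the span are strong local isomorphisms. The arrow $\mathcal{C}\rightarrow \mathcal{B}=\mathcal{A}$ is a strong local isomorphism by Theorem \ref{Main-local-iso-pro-K}(1). For the arrow $\mathcal{A}\rightarrow \mathcal{B}$, I would observe that it is the identity morphism, and that any isomorphism is trivially a strong local isomorphism; this is immediate from Definition \ref{local-isomorphism}, since an isomorphism restricts to an isomorphism over every covering and is therefore locally an isomorphism in the strong sense. Hence $\mathcal{A}$ is smooth.

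I do not expect a genuine obstacle here: the corollary is a formal consequence of the already-proved Theorem \ref{Main-local-iso-pro-K}, whose substantive content—that the cosheafification counit is a strong local isomorphism—carries all the real work. The single thing one must not overlook is the cocompleteness of $\mathbf{K}$, which is exactly what guarantees, through Theorem \ref{Th-Main} (\ref{Th-Main-Pro(K)-K-(co)complete}), that $\left( \mathcal{A}\right) _{\#}^{\mathbf{Pro}\left( \mathbf{K}\right) }$ both exists and is a cosheaf.
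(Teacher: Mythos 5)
Your proof is correct and follows essentially the same route as the paper: the paper's own argument is precisely the span $\mathcal{A}\overset{\mathbf{1}_{\mathcal{A}}}{\longrightarrow }\mathcal{A}\longleftarrow \left( \mathcal{A}\right) _{\#}^{\mathbf{Pro}\left( \mathbf{K}\right) }$, with the right-hand leg a strong local isomorphism by Theorem \ref{Main-local-iso-pro-K}(1) and the left-hand leg the identity. You merely spell out the justifications (existence of the coreflection via Theorem \ref{Th-Main}, and that identities are strong local isomorphisms on costalks) that the paper leaves implicit.
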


\begin{proof}
Consider the diagram%
\begin{equation*}
\mathcal{A}\overset{\mathbf{1}_{\mathcal{A}}}{\longrightarrow }\mathcal{A}%
\longleftarrow \left( \mathcal{A}\right) _{\#}^{\mathbf{Pro}\left( \mathbf{K}%
\right) },
\end{equation*}
or the diagram 
\begin{equation*}
\mathcal{A}\longleftarrow \left( \mathcal{A}\right) _{\#}^{\mathbf{Pro}%
\left( \mathbf{K}\right) }\overset{\mathbf{1}_{\mathcal{A}}}{\longrightarrow 
}\left( \mathcal{A}\right) _{\#}^{\mathbf{Pro}\left( \mathbf{K}\right) }.
\end{equation*}
\end{proof}

The results on cosheaves and precosheaves with values in $\mathbf{Pro}\left( 
\mathbf{K}\right) $ can be applied to \textquotedblleft
usual\textquotedblright\ ones (like in \cite{Bredon-MR0226631} and \cite[%
Chapter VI]{Bredon-Book-MR1481706}), with values in $\mathbf{K}$, because $%
\mathbf{K}$ is a full subcategory of $\mathbf{Pro}\left( \mathbf{K}\right) $
(see Remark \ref{Rem-Trivial-pro-object}).

The connection between the two types of (pre)cosheaves can be summarized in
the following

\begin{theorem}
\label{Our-cosheaves-vs-Bredon}Let $\mathbf{K}$ be cocomplete, and let $%
\mathcal{A}$ be a precosheaf on a topological space $X$ with values in $%
\mathbf{K}$.

\begin{enumerate}
\item $\mathcal{A}$ is coseparated (a cosheaf) iff it is coseparated (a
cosheaf) when considered as a precosheaf with values in $\mathbf{Pro}\left( 
\mathbf{K}\right) $.

\item $\mathcal{A}$ is smooth iff $\left( \mathcal{A}\right) _{\#}^{\mathbf{%
Pro}\left( \mathbf{K}\right) }$ takes values in $\mathbf{K}$, i.e. $\left( 
\mathcal{A}\right) _{\#}^{\mathbf{Pro}\left( \mathbf{K}\right) }\left(
U\right) \in \mathbf{K}$ (in other words, $\left( \mathcal{A}\right) _{\#}^{%
\mathbf{Pro}\left( \mathbf{K}\right) }\left( U\right) $ is a \emph{%
rudimentary} pro-object, see Remark \ref{Rem-Trivial-pro-object}) for any
open subset $U\subseteq X$.
\end{enumerate}
\end{theorem}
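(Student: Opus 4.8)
The plan is to dispose of statement (1) immediately and then to reduce statement (2) to a single lemma describing how cosheafification interacts with strong local isomorphisms. For (1), I would simply invoke Theorem \ref{Th-Main}(\ref{Th-Main-Pro(K)-K-(co)complete-cosheaf}) for the small site $OPEN(X)$: since $\mathbf{K}\hookrightarrow \mathbf{Pro}(\mathbf{K})$ is a full embedding (Remark \ref{Rem-Trivial-pro-object}), a $\mathbf{K}$-valued precosheaf is coseparated (a cosheaf) in $\mathbf{K}$ exactly when it is so in $\mathbf{Pro}(\mathbf{K})$. I will use this freely below, together with the compatibility observation that, for a morphism between $\mathbf{K}$-valued precosheaves, the property of being a strong local isomorphism is the same whether the precosheaves are regarded in $\mathbf{K}$ or in $\mathbf{Pro}(\mathbf{K})$.

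The engine for statement (2) is the claim that the cosheafification $(\,\cdot\,)_{\#}^{\mathbf{Pro}(\mathbf{K})}$ carries strong local isomorphisms of $\mathbf{Pro}(\mathbf{K})$-valued precosheaves to isomorphisms. To prove this I would take a strong local isomorphism $f:\mathcal{P}\rightarrow \mathcal{Q}$ and consider the naturality square of the coreflection counit, relating $f$, the induced map $(f)_{\#}$, and the two counits $(\mathcal{P})_{\#}\rightarrow \mathcal{P}$ and $(\mathcal{Q})_{\#}\rightarrow \mathcal{Q}$. The latter two are strong local isomorphisms by Theorem \ref{Main-local-iso-pro-K}(1), so the composite $(\mathcal{P})_{\#}\rightarrow \mathcal{P}\rightarrow \mathcal{Q}$ is one; cancelling the strong local isomorphism $(\mathcal{Q})_{\#}\rightarrow \mathcal{Q}$ (strong local isomorphisms enjoy the usual two-out-of-three calculus) shows $(f)_{\#}$ is a strong local isomorphism between the cosheaves $(\mathcal{P})_{\#}$ and $(\mathcal{Q})_{\#}$, hence an isomorphism by Theorem \ref{Main-local-iso-pro-K}(2).

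Granting this lemma, the forward direction of (2) is formal. Suppose $\mathcal{A}$ is smooth; by Definition \ref{Def-smooth} there is a $\mathbf{K}$-valued cosheaf $\mathcal{C}$ and strong local isomorphisms $\mathcal{A}\leftarrow \mathcal{B}^{\prime }\rightarrow \mathcal{C}$ (the other zigzag being handled identically). Viewing the diagram in $\mathbf{Pro}(\mathbf{K})$ and applying $(\,\cdot\,)_{\#}^{\mathbf{Pro}(\mathbf{K})}$, the lemma turns both legs into isomorphisms, so $(\mathcal{A})_{\#}^{\mathbf{Pro}(\mathbf{K})}\cong (\mathcal{C})_{\#}^{\mathbf{Pro}(\mathbf{K})}$. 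Since $\mathcal{C}$ is a cosheaf in $\mathbf{K}$, it is a cosheaf in $\mathbf{Pro}(\mathbf{K})$ by statement (1), so its counit $(\mathcal{C})_{\#}^{\mathbf{Pro}(\mathbf{K})}\rightarrow \mathcal{C}$ is an isomorphism; hence $(\mathcal{A})_{\#}^{\mathbf{Pro}(\mathbf{K})}\cong \mathcal{C}$ takes values in $\mathbf{K}$.

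For the converse I would set $\mathcal{C}:=(\mathcal{A})_{\#}^{\mathbf{Pro}(\mathbf{K})}$ and assume it is $\mathbf{K}$-valued. It is a cosheaf in $\mathbf{Pro}(\mathbf{K})$, hence a cosheaf in $\mathbf{K}$ by statement (1); by Theorem \ref{Main-local-iso-pro-K}(1) the counit $\mathcal{C}\rightarrow \mathcal{A}$ is a strong local isomorphism, which (both precosheaves being $\mathbf{K}$-valued) is a strong local isomorphism in $\mathbf{K}$. The zigzag $\mathcal{A}\leftarrow \mathcal{C}\overset{\mathbf{1}_{\mathcal{C}}}{\longrightarrow }\mathcal{C}$ then exhibits $\mathcal{A}$ as smooth via Definition \ref{Def-smooth}. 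The delicate point throughout is exactly the compatibility of the strong local isomorphism notion with the embedding $\mathbf{K}\hookrightarrow \mathbf{Pro}(\mathbf{K})$ and its two-out-of-three behaviour; these are what make the two viewpoints interchangeable, and once they are in place (they should follow from the very definition of strong local isomorphism together with the fullness of the embedding) everything else is bookkeeping with the coreflection.
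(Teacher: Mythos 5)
Your proposal is correct and follows essentially the same route as the paper: part (1) is quoted from Theorem \ref{Th-Main}, and part (2) is proved by applying $\left( {}\right) _{\#}^{\mathbf{Pro}\left( \mathbf{K}\right) }$ to the smoothness zigzag and using Theorem \ref{Main-local-iso-pro-K} together with the two-out-of-three property of strong local isomorphisms, with the converse given by the identity zigzag through the $\mathbf{K}$-valued cosheaf $\left( \mathcal{A}\right) _{\#}^{\mathbf{Pro}\left( \mathbf{K}\right) }$. The only cosmetic difference is that you isolate as a named lemma (``cosheafification inverts strong local isomorphisms'') what the paper carries out inline on the specific diagrams.
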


We are now able to construct \emph{constant} cosheaves, and to establish
connections to shape theory.

\begin{theorem}
\label{Main-constant}Let $\mathbf{K}$ be a cocomplete category, and let $%
G\in \mathbf{K}$.

\begin{enumerate}
\item \label{Main-constant-General}The precosheaf%
\begin{equation*}
\mathcal{P}\left( U\right) :=G\otimes _{\mathbf{Set}}pro\text{-}\pi
_{0}\left( U\right)
\end{equation*}%
(Definition \ref{Def-Pro(Set)-Ten-Set-K}), where $pro$-$\pi _{0}$ is the
pro-homotopy functor from Definition \ref{Pro-homotopy-groups} (see also 
\cite[p. 121]{Mardesic-Segal-MR676973}), is a cosheaf. Let $G$ be the
constant precosheaf corresponding to $G$ (Definition \ref{constant}) on $X$
with values in $\mathbf{Pro}\left( \mathbf{K}\right) $. Then $\left(
G\right) _{\#}$ is naturally isomorphic to $\mathcal{P}$.

\item \label{Main-constant-Set}Let $\mathbf{K=Set}$. The precosheaf%
\begin{equation*}
\mathcal{Q}\left( U\right) :=G\times pro\text{-}\pi _{0}\left( U\right)
\end{equation*}
is a cosheaf. Let $G$ be the constant precosheaf corresponding to $G$ on $X$
with values in $\mathbf{Pro}\left( \mathbf{Set}\right) $. Then $\left(
G\right) _{\#}$ is naturally isomorphic to $\mathcal{Q}$.

\item \label{Main-constant-Ab}Let $\mathbf{K=Ab}$. The precosheaf%
\begin{equation*}
\mathcal{H}\left( U\right) :=pro\text{-}H_{0}\left( U,G\right)
\end{equation*}%
where $pro$-$H_{0}$ is the pro-homology functor from Definition \ref%
{Pro-homology-groups} (see also \cite[\S II.3.2]{Mardesic-Segal-MR676973}),
is a cosheaf. Let $G$ be the constant precosheaf corresponding to $G$
(Definition \ref{constant-AB}) on $X$ with values in $\mathbf{Pro}\left( 
\mathbf{Ab}\right) $. Then $\left( G\right) _{\#}$ is naturally isomorphic
to $\mathcal{H}$.
\end{enumerate}
\end{theorem}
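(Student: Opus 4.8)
The plan is to derive parts (\ref{Main-constant-Set}) and (\ref{Main-constant-Ab}) from the general part (\ref{Main-constant-General}), and to prove the latter by exhibiting $\mathcal{P}$ as a cosheaf that is strongly locally isomorphic to the constant precosheaf $G$, so that Theorem \ref{Main-local-iso-pro-K}(3) identifies it with $(G)_{\#}$.

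First the reductions. The symbol $G\otimes_{\mathbf{Set}}S$ denotes the copower, i.e. the $S$-indexed coproduct of copies of $G$. In $\mathbf{Set}$ this copower is $\coprod_{S}G=G\times S$, so $\mathcal{Q}=\mathcal{P}$ verbatim. In $\mathbf{Ab}$ it is $\bigoplus_{S}G$, and since for any space $Y$ one has $H_{0}(Y;G)\cong\bigoplus_{\pi_{0}(Y)}G$, applying this levelwise along an $H(\mathbf{Pol})$-expansion $U\to(Y_{j})_{j}$ gives a natural isomorphism $\mathcal{H}(U)=pro\text{-}H_{0}(U,G)=(H_{0}(Y_{j};G))_{j}\cong(G\otimes_{\mathbf{Set}}\pi_{0}(Y_{j}))_{j}=\mathcal{P}(U)$. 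Hence $\mathcal{H}=\mathcal{P}$ and both special cases follow once (\ref{Main-constant-General}) is established.

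For (\ref{Main-constant-General}) I would first construct the comparison morphism: collapsing $pro\text{-}\pi_{0}(U)$ onto the terminal pro-set $\mathbf{pt}$ and tensoring with $G$ yields a morphism of precosheaves $\mathcal{P}\to G$ into the constant precosheaf of Remark \ref{Rem-Trivial-pro-object}, natural in $U$ because the collapse is compatible with the maps $pro\text{-}\pi_{0}(V)\to pro\text{-}\pi_{0}(U)$ induced by $V\subseteq U$. It then suffices to prove (i) that $\mathcal{P}$ is a cosheaf and (ii) that $\mathcal{P}\to G$ is a strong local isomorphism; Theorem \ref{Main-local-iso-pro-K}(3), applied with $\mathcal{B}=\mathcal{P}$ and $\mathcal{A}=G$, then shows that the induced morphism $\mathcal{P}\to(G)_{\#}$ is an isomorphism, which at once proves that $\mathcal{P}$ is the cosheafification and, in particular, is a cosheaf. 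It is economical to treat first the case $G=\mathbf{pt}\in\mathbf{Set}$ (where $\mathcal{P}=pro\text{-}\pi_{0}$) and then transport by the copower functor $G\otimes_{\mathbf{Set}}(-)\colon\mathbf{Pro}(\mathbf{Set})\to\mathbf{Pro}(\mathbf{K})$: being a left adjoint on the underlying set level it preserves the colimits occurring in the cosheaf condition, so it carries the cosheaf $pro\text{-}\pi_{0}$ to the cosheaf $\mathcal{P}$ and (via its local characterization) the strong local isomorphism $pro\text{-}\pi_{0}\to\mathbf{pt}$ to $\mathcal{P}\to G$.

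The main obstacle is step (i) for $pro\text{-}\pi_{0}$: that $U\mapsto pro\text{-}\pi_{0}(U)$ satisfies the cosheaf gluing condition in $\mathbf{Pro}(\mathbf{Set})$. Here I would use the presentation of $pro\text{-}\pi_{0}(U)$ as the pro-set $(\pi_{0}(N\mathcal{U}))_{\mathcal{U}}$ indexed by the nerves of the normal open coverings $\mathcal{U}$ of $U$, which is an $H(\mathbf{Pol})$-expansion by the expansion theorem of \cite{Mardesic-Segal-MR676973} (Definition \ref{Pro-homotopy-groups}). For a fixed covering, the \v{C}ech coequalizer of the terminal precosheaf computes precisely $\pi_{0}$ of the corresponding nerve, so the cosheaf axiom reduces to a shape-theoretic descent statement for connected components. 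The delicate point is that this descent must be checked after assembling over all refinements, i.e. one must control the commutation of the cosheaf colimit with the cofiltered pro-limit defining $pro\text{-}\pi_{0}$, an exactness property that holds in $\mathbf{Pro}(\mathbf{Set})$ but fails in $\mathbf{Set}$, and which is exactly the reason the construction is carried out in the pro-category. Step (ii) is then comparatively routine: on a cofinal family of sufficiently fine normal coverings the collapse $\pi_{0}(N\mathcal{U})\to\mathbf{pt}$ realizes the defining local condition, so that $\mathcal{P}\to G$ is a strong local isomorphism, and the proof concludes via Theorem \ref{Main-local-iso-pro-K}(3).
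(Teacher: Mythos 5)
Your overall architecture matches the paper's: reduce parts (\ref{Main-constant-Set}) and (\ref{Main-constant-Ab}) to (\ref{Main-constant-General}) via the identifications $G\otimes_{\mathbf{Set}}S=G\times S$ in $\mathbf{Set}$ and $H_{0}(Y_{j},G)\simeq G\otimes_{\mathbf{Set}}\pi_{0}(Y_{j})$ for polyhedra, then show that $\mathcal{P}$ is a cosheaf strongly locally isomorphic to the constant precosheaf and invoke Theorem \ref{Main-local-iso-pro-K}(3). Those reductions and the concluding step are fine. The problem is that you never actually prove the central claim, namely that $\mathcal{P}$ (equivalently $pro$-$\pi_{0}$) satisfies the cosheaf condition. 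You yourself label this ``the main obstacle'' and ``the delicate point,'' and what you offer in its place is an assertion that the \v{C}ech cokernel commutes with the cofiltered limit over refinements because of ``an exactness property that holds in $\mathbf{Pro}(\mathbf{Set})$.'' That property is not available in the form you need: in $\mathbf{Pro}(\mathbf{K})$ cofiltered limits commute with \emph{finite} colimits (the dual of Proposition 6.1.19 of Kashiwara--Schapira, which is what the paper uses for $(\ )_{+}$), but the cosheaf condition for an infinite covering involves cokernels of \emph{infinite} coproducts, and moreover each $pro$-$\pi_{0}(U_{i})$ is itself a pro-object over a different index category, so there is no single cofiltered limit to pull out. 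A direct descent argument on nerves would have to be built from scratch, and your sketch does not supply it.

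The paper closes exactly this gap by dualizing instead of computing the colimit directly. By Propositions \ref{Prop-Hom(K,G)} and \ref{Prop-Hom(Pro(K),G)}, $\mathcal{P}$ is a cosheaf if and only if the $\mathbf{Set}$-valued presheaf $U\mapsto Hom_{\mathbf{Pro}(\mathbf{K})}(G\otimes_{\mathbf{Set}}pro\text{-}\pi_{0}(U),H)$ is a sheaf for every $H\in\mathbf{K}$; and the key computation (Proposition \ref{Dual-pro-p0}) identifies this presheaf with $U\mapsto Hom_{\mathbf{K}}(G,H)^{U}$, the continuous functions into the \emph{discrete} set $Hom_{\mathbf{K}}(G,H)$, using only the defining universal property of an $H(\mathbf{Pol})$-expansion and local connectedness of polyhedra. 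For that presheaf the sheaf condition is immediate (continuous maps glue), and its stalks are constant, which simultaneously gives the strong local comparison with the constant precosheaf via Proposition \ref{Prop-Strong-local-equivalence}. If you want to salvage your write-up, replace your step (i) by this duality argument, or else supply an actual proof of the descent statement for $(\pi_{0}(N\mathcal{U}))_{\mathcal{U}}$ that does not rely on an unproved commutation of cofiltered limits with infinite \v{C}ech colimits.
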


\begin{corollary}
\label{One-point-constant}\label{Cor-One-point-constant}
\end{corollary}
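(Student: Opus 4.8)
The plan is to obtain this corollary as a direct specialization of Theorem \ref{Main-constant}, taking $\mathbf{K}=\mathbf{Set}$ and $G=\mathbf{pt}$, the terminal (one-point) set. The expected statement is that the constant cosheaf $\left( \mathbf{pt}\right) _{\#}$ with values in $\mathbf{Pro}\left( \mathbf{Set}\right) $ is naturally isomorphic to $pro$-$\pi _{0}$, exactly as announced in the introduction.

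First I would invoke part (\ref{Main-constant-Set}) of Theorem \ref{Main-constant}, which supplies a natural isomorphism $\left( G\right) _{\#}\cong \mathcal{Q}$, where $\mathcal{Q}\left( U\right) =G\times pro$-$\pi _{0}\left( U\right) $. Substituting $G=\mathbf{pt}$ yields $\mathcal{Q}\left( U\right) =\mathbf{pt}\times pro$-$\pi _{0}\left( U\right) $.

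The only point to verify is that $\mathbf{pt}\times pro$-$\pi _{0}\left( U\right) $ is naturally isomorphic to $pro$-$\pi _{0}\left( U\right) $. Since $\mathbf{pt}$ is terminal in $\mathbf{Set}$, the functor $\mathbf{pt}\times \left( -\right) $ is naturally isomorphic to the identity on $\mathbf{Set}$ via the canonical projection; because both the product with $\mathbf{pt}$ and the functor $pro$-$\pi _{0}$ are computed level-wise on representing cofiltered diagrams, this unit isomorphism extends level-wise to a natural isomorphism $\mathbf{pt}\times pro$-$\pi _{0}\cong pro$-$\pi _{0}$ of precosheaves. Composing with the isomorphism of part (\ref{Main-constant-Set}) then gives $\left( \mathbf{pt}\right) _{\#}\cong pro$-$\pi _{0}$.

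I anticipate no genuine obstacle: the whole argument is the observation that the one-point set is a unit for the product, together with the fact that this holds level-wise in the pro-category, so that it respects the cofiltered representing diagrams. If one prefers, the identical conclusion follows from part (\ref{Main-constant-General}) with $\mathbf{K}=\mathbf{Set}$, using that $\mathbf{pt}\otimes _{\mathbf{Set}}S\cong S$ naturally for every set $S$.
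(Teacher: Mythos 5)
Your proposal is correct and is essentially identical to the paper's own proof, which simply puts $G=\mathbf{pt}$ in Theorem \ref{Main-constant} (\ref{Main-constant-Set}); your additional remark that $\mathbf{pt}\times pro$-$\pi_{0}(U)\simeq pro$-$\pi_{0}(U)$ holds level-wise on representing cofiltered diagrams is the (routine) detail the paper leaves implicit.
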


\begin{enumerate}
\item $pro$-$\pi _{0}$ is a cosheaf.

\item $\left( \mathbf{pt}\right) _{\#}%
\simeq%
pro$-$\pi _{0}$ where $\mathbf{pt}$ is the one-point constant precosheaf.
\end{enumerate}

\begin{proof}
Put $G=\mathbf{pt}$ in Theorem \ref{Main-constant} (\ref{Main-constant-Set}).
\end{proof}

\section{Pairings}

\begin{definition}
Let $\mathbf{K}$ be a category. Assume that $\mathbf{K}$ is complete in (\ref%
{Def-Hom-Set-Z-G}) below, and cocomplete in (\ref{Def-G-ten-Set-Z}) below.
Given%
\begin{equation*}
G,H\in \mathbf{K,~}Z\in \mathbf{Set,}
\end{equation*}%
let us consider

\begin{enumerate}
\item 
\begin{equation*}
Hom_{\mathbf{K}}\left( G,H\right) \in \mathbf{Set;}
\end{equation*}

\item \label{Def-Hom-Set-Z-G}%
\begin{equation*}
Hom_{\mathbf{Set}}\left( Z,G\right) 
{:=}%
\dprod\limits_{Z}G\in \mathbf{K;}
\end{equation*}

\item \label{Def-G-ten-Set-Z}%
\begin{equation*}
G\otimes _{\mathbf{Set}}Z=Z\otimes _{\mathbf{Set}}G%
{:=}%
\dcoprod\limits_{Z}G\in \mathbf{K;}
\end{equation*}
\end{enumerate}
\end{definition}

\begin{remark}
The first two assignments are contravariant on the first argument and
covariant on the second argument:%
\begin{eqnarray*}
Hom_{\mathbf{K}}\left( \_,\_\right) &:&\mathbf{K}^{op}\mathbf{\times K}%
\longrightarrow \mathbf{Set,} \\
Hom_{\mathbf{Set}}\left( \_,\_\right) &:&\mathbf{Set}^{op}\mathbf{\times K}%
\longrightarrow \mathbf{K,}
\end{eqnarray*}%
while the third functor is covariant on both arguments:%
\begin{eqnarray*}
\_\otimes _{\mathbf{Set}}\_ &:&\mathbf{K\times Set}\longrightarrow \mathbf{K,%
} \\
\_\otimes _{\mathbf{Set}}\_ &:&\mathbf{Set\times K}\longrightarrow \mathbf{K.%
}
\end{eqnarray*}
\end{remark}

\begin{definition}
\label{Def-Pro(Set)-Ten-Set-K}Let $\mathcal{X}=\left( X_{i}\right) _{i\in 
\mathbf{I}}\in \mathbf{Pro}\left( \mathbf{Set}\right) $, and $Y\in \mathbf{K}
$. Define%
\begin{equation*}
\mathcal{X}\otimes _{\mathbf{Set}}Y=Y\otimes _{\mathbf{Set}}\mathcal{X}\in 
\mathbf{Pro}\left( \mathbf{K}\right)
\end{equation*}%
by%
\begin{equation*}
\mathcal{X}\otimes _{\mathbf{Set}}Y=\left( X_{i}\otimes _{\mathbf{Set}%
}Y\right) _{i\in \mathbf{I}}.
\end{equation*}
\end{definition}

\begin{definition}
Let $\mathbf{K}$ be a complete and cocomplete category, let%
\begin{equation*}
\mathcal{A}:\mathbf{C}\longrightarrow \mathbf{K,~}\mathcal{B}:\mathbf{C}%
\longrightarrow \mathbf{Set,}
\end{equation*}%
be functors, and let%
\begin{equation*}
G\in \mathbf{K,~}Z\in \mathbf{Set.}
\end{equation*}%
Then define the following functors:

\begin{enumerate}
\item 
\begin{equation*}
Hom_{\mathbf{K}}\left( G,\mathcal{A}\right) :\mathbf{C}\longrightarrow 
\mathbf{Set,}
\end{equation*}%
\begin{equation*}
Hom_{\mathbf{K}}\left( G,\mathcal{A}\right) \left( U\right) 
{:=}%
Hom_{\mathbf{K}}\left( G,\mathcal{A}\left( U\right) \right) ;
\end{equation*}

\item 
\begin{equation*}
Hom_{\mathbf{K}}\left( \mathcal{A},G\right) :\mathbf{C}^{op}\longrightarrow 
\mathbf{Set,}
\end{equation*}%
\begin{equation*}
Hom_{\mathbf{K}}\left( \mathcal{A},G\right) \left( U\right) 
{:=}%
Hom_{\mathbf{K}}\left( \mathcal{A}\left( U\right) ,G\right) ;
\end{equation*}

\item 
\begin{equation*}
Hom_{\mathbf{Set}}\left( \mathcal{B},G\right) :\mathbf{C}^{op}%
\longrightarrow \mathbf{K,}
\end{equation*}%
\begin{equation*}
Hom_{\mathbf{Set}}\left( \mathcal{B},G\right) 
{:=}%
Hom_{\mathbf{Set}}\left( \mathcal{B}\left( U\right) ,G\right) ;
\end{equation*}

\item 
\begin{equation*}
\mathcal{A}\otimes _{\mathbf{Set}}Z=Z\otimes _{\mathbf{Set}}\mathcal{A}:%
\mathbf{C}\longrightarrow \mathbf{K,}
\end{equation*}%
\begin{equation*}
\left( \mathcal{A}\otimes _{\mathbf{Set}}Z\right) \left( U\right) =\left(
Z\otimes _{\mathbf{Set}}\mathcal{A}\right) \left( U\right) 
{:=}%
Z\otimes _{\mathbf{Set}}\left( \mathcal{A}\left( U\right) \right) ;
\end{equation*}

\item 
\begin{equation*}
\mathcal{B}\otimes _{\mathbf{Set}}G=G\otimes _{\mathbf{Set}}\mathcal{B}:%
\mathbf{C}\longrightarrow \mathbf{K,}
\end{equation*}%
\begin{equation*}
\left( \mathcal{B}\otimes _{\mathbf{Set}}G\right) \left( U\right) =\left(
G\otimes _{\mathbf{Set}}\mathcal{B}\right) \left( U\right) 
{:=}%
G\otimes _{\mathbf{Set}}\left( \mathcal{B}\left( U\right) \right) .
\end{equation*}
\end{enumerate}
\end{definition}

\begin{remark}
The first three assignments are contravariant on the first argument and
covariant on the second argument:%
\begin{eqnarray*}
Hom_{\mathbf{K}}\left( \_,\_\right) &:&\mathbf{K}^{op}\times \mathbf{K}^{%
\mathbf{C}}\longrightarrow \mathbf{Set}^{\mathbf{C}}\mathbf{,} \\
Hom_{\mathbf{K}}\left( \_,\_\right) &:&\left( \mathbf{K}^{\mathbf{C}}\right)
^{op}\mathbf{\times K}\longrightarrow \mathbf{Set}^{\mathbf{C}^{op}}\mathbf{,%
} \\
Hom_{\mathbf{Set}}\left( \_,\_\right) &:&\left( \mathbf{Set}^{\mathbf{C}%
}\right) ^{op}\mathbf{\times K}\longrightarrow \mathbf{K}^{\mathbf{C}^{op}}%
\mathbf{,}
\end{eqnarray*}%
while the last four assignments are covariant on both arguments:%
\begin{eqnarray*}
\_\otimes _{\mathbf{Set}}\_ &:&\mathbf{K}^{\mathbf{C}}\mathbf{\times Set}%
\longrightarrow \mathbf{K}^{\mathbf{C}}\mathbf{,} \\
\_\otimes _{\mathbf{Set}}\_ &:&\mathbf{Set\times K}^{\mathbf{C}%
}\longrightarrow \mathbf{K}^{\mathbf{C}}\mathbf{,} \\
\_\otimes _{\mathbf{Set}}\_ &:&\mathbf{Set}^{\mathbf{C}}\mathbf{\times K}%
\longrightarrow \mathbf{K}^{\mathbf{C}}\mathbf{,} \\
\_\otimes _{\mathbf{Set}}\_ &:&\mathbf{K\times Set}^{\mathbf{C}%
}\longrightarrow \mathbf{K}^{\mathbf{C}}\mathbf{.}
\end{eqnarray*}
\end{remark}

\begin{definition}
Let $\mathbf{K}$ be a complete and cocomplete category, let $\mathbf{C}$ be
a small category, and let%
\begin{equation*}
\mathcal{A}:\mathbf{C}\longrightarrow \mathbf{K,~}\mathcal{B}:\mathbf{C}%
\longrightarrow \mathbf{Set,~}\mathcal{F}:\mathbf{C}^{op}\longrightarrow 
\mathbf{Set,}
\end{equation*}%
be functors. Then define the following objects:

\begin{enumerate}
\item $Hom_{\mathbf{Set}^{\mathbf{C}}}\left( \mathcal{B},\mathcal{A}\right)
\in \mathbf{K}$ is the \textbf{end} \cite[Chapter IX.5]%
{Mac-Lane-Categories-1998-MR1712872}, of the bifunctor $\left( U,V\right)
\mapsto Hom_{\mathbf{Set}}\left( \mathcal{B}\left( U\right) ,\mathcal{A}%
\left( V\right) \right) $, i.e. 
\begin{equation*}
Hom_{\mathbf{Set}^{\mathbf{C}}}\left( \mathcal{B},\mathcal{A}\right) 
{:=}%
\ker \left( \dprod\limits_{U}Hom_{\mathbf{Set}}\left( \mathcal{B}\left(
U\right) ,\mathcal{A}\left( U\right) \right) \rightrightarrows
\dprod\limits_{U\rightarrow V}Hom_{\mathbf{Set}}\left( \mathcal{B}\left(
U\right) ,\mathcal{A}\left( V\right) \right) \right) .
\end{equation*}

\item $\mathcal{A\otimes }_{\mathbf{Set}^{\mathbf{C}}}\mathcal{F=F\otimes }_{%
\mathbf{Set}^{\mathbf{C}^{op}}}\mathcal{A}\in \mathbf{K}$ is the \textbf{%
coend }\cite[Chapter IX.6]{Mac-Lane-Categories-1998-MR1712872}, of the
bifunctor $\left( U,V\right) \mapsto \mathcal{A}\left( U\right) \otimes _{%
\mathbf{Set}}\mathcal{F}\left( V\right) $, i.e.%
\begin{equation*}
\mathcal{A\otimes }_{\mathbf{Set}^{\mathbf{C}}}\mathcal{F}%
{:=}%
coker\left( \dcoprod\limits_{U\rightarrow V}\mathcal{A}\left( U\right)
\otimes _{\mathbf{Set}}\mathcal{F}\left( V\right) \rightrightarrows
\dprod\limits_{U}\mathcal{A}\left( U\right) \otimes _{\mathbf{Set}}\mathcal{F%
}\left( U\right) \right) .
\end{equation*}
\end{enumerate}
\end{definition}

\begin{proposition}
\label{Prop-Hom-KC-(B-Tensor-Set-G,A)}Let $G\in \mathbf{K}$, $\mathcal{A}\in 
\mathbf{K}^{\mathbf{C}}$, and $\mathcal{B}\in \mathbf{Set}^{\mathbf{C}}$.
Then%
\begin{equation*}
Hom_{\mathbf{K}^{\mathbf{C}}}\left( \mathcal{B}\otimes _{\mathbf{Set}}G,%
\mathcal{A}\right) 
\simeq%
Hom_{\mathbf{K}}\left( G,Hom_{\mathbf{Set}^{\mathbf{C}}}\left( \mathcal{B},%
\mathcal{A}\right) \right) 
\simeq%
Hom_{\mathbf{Set}^{\mathbf{C}}}\left( \mathcal{B},Hom_{\mathbf{K}}\left( G,%
\mathcal{A}\right) \right)
\end{equation*}%
naturally on $G$, $\mathcal{A}$, and $\mathcal{B}$.
\end{proposition}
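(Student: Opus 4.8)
The plan is to prove both isomorphisms by reducing each of the three terms to the set of natural transformations $Hom_{\mathbf{Set}^{\mathbf{C}}}\left( \mathcal{B},Hom_{\mathbf{K}}\left( G,\mathcal{A}\right) \right) $ between the two $\mathbf{Set}$-valued functors $\mathcal{B}$ and $Hom_{\mathbf{K}}\left( G,\mathcal{A}\right) $ on $\mathbf{C}$. The only two inputs needed are the defining adjunction of the copower $\otimes _{\mathbf{Set}}$, namely the natural bijection $Hom_{\mathbf{K}}\left( Z\otimes _{\mathbf{Set}}G,Y\right) \simeq Hom_{\mathbf{Set}}\left( Z,Hom_{\mathbf{K}}\left( G,Y\right) \right) $ coming from the universal property of the coproduct $Z\otimes _{\mathbf{Set}}G=\dcoprod_{Z}G$, and the fact that the representable functor $Hom_{\mathbf{K}}\left( G,\_\right) \colon \mathbf{K}\rightarrow \mathbf{Set}$ preserves all limits (which exist here since $\mathbf{K}$ is complete).

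First I would treat the leftmost term. A morphism $\mathcal{B}\otimes _{\mathbf{Set}}G\rightarrow \mathcal{A}$ in $\mathbf{K}^{\mathbf{C}}$ is a family of morphisms $\phi _{U}\colon \dcoprod_{\mathcal{B}\left( U\right) }G\rightarrow \mathcal{A}\left( U\right) $, natural in $U$. Applying the copower adjunction pointwise turns each $\phi _{U}$ into a map of sets $\psi _{U}\colon \mathcal{B}\left( U\right) \rightarrow Hom_{\mathbf{K}}\left( G,\mathcal{A}\right) \left( U\right) $, and the naturality square for $\left( \phi _{U}\right) $ with respect to an arrow $U\rightarrow V$ of $\mathbf{C}$ is, under this correspondence, exactly the naturality square for $\left( \psi _{U}\right) $. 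Hence the families $\left( \phi _{U}\right) $ are in natural bijection with the natural transformations $\psi \colon \mathcal{B}\Rightarrow Hom_{\mathbf{K}}\left( G,\mathcal{A}\right) $, which gives the rightmost isomorphism.

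Next I would treat the middle term. By definition $Hom_{\mathbf{Set}^{\mathbf{C}}}\left( \mathcal{B},\mathcal{A}\right) $ is the end, realized as the kernel in $\mathbf{K}$
\[
\ker \Bigl( \dprod_{U}Hom_{\mathbf{Set}}\left( \mathcal{B}\left( U\right) ,\mathcal{A}\left( U\right) \right) \rightrightarrows \dprod_{U\rightarrow V}Hom_{\mathbf{Set}}\left( \mathcal{B}\left( U\right) ,\mathcal{A}\left( V\right) \right) \Bigr) .
\]
Since $Hom_{\mathbf{K}}\left( G,\_\right) $ preserves limits, it carries this kernel of a product to the corresponding kernel of a product of sets; applying the copower adjunction to each factor $Hom_{\mathbf{K}}\left( G,Hom_{\mathbf{Set}}\left( \mathcal{B}\left( U\right) ,\mathcal{A}\left( V\right) \right) \right) \simeq Hom_{\mathbf{Set}}\left( \mathcal{B}\left( U\right) ,Hom_{\mathbf{K}}\left( G,\mathcal{A}\right) \left( V\right) \right) $ identifies the result with the end computing $Hom_{\mathbf{Set}^{\mathbf{C}}}\left( \mathcal{B},Hom_{\mathbf{K}}\left( G,\mathcal{A}\right) \right) $. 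This establishes the leftmost isomorphism.

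Naturality in $G$, $\mathcal{A}$, and $\mathcal{B}$ is inherited from the naturality of the copower adjunction and of the limit-preservation isomorphisms, so the composites are natural in all three arguments. The hard part is purely bookkeeping: one must verify that the dinaturality condition defining the end matches the naturality condition for transformations $\mathcal{B}\Rightarrow Hom_{\mathbf{K}}\left( G,\mathcal{A}\right) $, and that $Hom_{\mathbf{K}}\left( G,\_\right) $ genuinely commutes with the specific kernel-of-a-product diagram above; both are immediate once the universal properties are written out, and there is no substantive obstacle.
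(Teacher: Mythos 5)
Your proof is correct and rests on exactly the same two ingredients as the paper's: the copower adjunction $Hom_{\mathbf{K}}\left( \coprod_{Z}G,Y\right) \simeq Hom_{\mathbf{Set}}\left( Z,Hom_{\mathbf{K}}\left( G,Y\right) \right) $ and the preservation of the kernel-of-products (end) diagrams by $Hom_{\mathbf{K}}\left( G,\_\right) $. The only difference is organizational --- the paper starts both chains of isomorphisms from the end presentation of $Hom_{\mathbf{K}^{\mathbf{C}}}\left( \mathcal{B}\otimes _{\mathbf{Set}}G,\mathcal{A}\right) $, whereas you pivot both comparisons through $Hom_{\mathbf{Set}^{\mathbf{C}}}\left( \mathcal{B},Hom_{\mathbf{K}}\left( G,\mathcal{A}\right) \right) $ and handle the first one pointwise on natural transformations --- so this is essentially the same argument with no gap.
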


\begin{proof}
\begin{equation*}
Hom_{\mathbf{K}^{\mathbf{C}}}\left( \mathcal{B}\otimes _{\mathbf{Set}}G,%
\mathcal{A}\right) 
\simeq%
\end{equation*}%
\begin{equation*}
\simeq%
\ker \left( \dprod\limits_{U}Hom_{\mathbf{K}}\left( \left( \mathcal{B}%
\otimes _{\mathbf{Set}}G\right) \left( U\right) ,\mathcal{A}\left( U\right)
\right) \rightrightarrows \dprod\limits_{U\rightarrow V}Hom_{\mathbf{K}%
}\left( \left( \mathcal{B}\otimes _{\mathbf{Set}}G\right) \left( U\right) ,%
\mathcal{A}\left( V\right) \right) \right)
\end{equation*}%
\begin{equation*}
\simeq%
\ker \left( \dprod\limits_{U}\dprod\limits_{\mathcal{B}\left( U\right) }Hom_{%
\mathbf{K}}\left( G,\mathcal{A}\left( U\right) \right) \rightrightarrows
\dprod\limits_{U\rightarrow V}\dprod\limits_{\mathcal{B}\left( U\right)
}Hom_{\mathbf{K}}\left( G,\mathcal{A}\left( V\right) \right) \right)
\end{equation*}%
\begin{equation*}
\simeq%
Hom_{\mathbf{K}}\left( G,\ker \left( \dprod\limits_{U}\dprod\limits_{%
\mathcal{B}\left( U\right) }\mathcal{A}\left( U\right) \rightrightarrows
\dprod\limits_{U\rightarrow V}\dprod\limits_{\mathcal{B}\left( U\right) }%
\mathcal{A}\left( V\right) \right) \right)
\end{equation*}%
\begin{equation*}
\simeq%
Hom_{\mathbf{K}}\left( G,\ker \left( \dprod\limits_{U}Hom_{\mathbf{Set}%
}\left( \mathcal{B}\left( U\right) ,\mathcal{A}\left( U\right) \right)
\rightrightarrows \dprod\limits_{U\rightarrow V}Hom_{\mathbf{Set}}\left( 
\mathcal{B}\left( U\right) ,\mathcal{A}\left( V\right) \right) \right)
\right)
\end{equation*}%
\begin{equation*}
\simeq%
Hom_{\mathbf{K}}\left( G,Hom_{\mathbf{Set}^{\mathbf{C}}}\left( \mathcal{B},%
\mathcal{A}\right) \right) .
\end{equation*}%
Similarly,%
\begin{equation*}
Hom_{\mathbf{K}^{\mathbf{C}}}\left( \mathcal{B}\otimes _{\mathbf{Set}}G,%
\mathcal{A}\right) 
\simeq%
\end{equation*}%
\begin{equation*}
\simeq%
\ker \left( \dprod\limits_{U}\dprod\limits_{\mathcal{B}\left( U\right) }Hom_{%
\mathbf{K}}\left( G,\mathcal{A}\left( U\right) \right) \rightrightarrows
\dprod\limits_{U\rightarrow V}\dprod\limits_{\mathcal{B}\left( U\right)
}Hom_{\mathbf{K}}\left( G,\mathcal{A}\left( V\right) \right) \right)
\end{equation*}%
\begin{equation*}
\simeq%
\ker \left( \dprod\limits_{U}Hom_{\mathbf{Set}}\left( \mathcal{B}\left(
U\right) ,Hom_{\mathbf{K}}\left( G,\mathcal{A}\left( U\right) \right)
\right) \rightrightarrows \dprod\limits_{U\rightarrow V}Hom_{\mathbf{Set}%
}\left( \mathcal{B}\left( U\right) ,Hom_{\mathbf{K}}\left( G,\mathcal{A}%
\left( V\right) \right) \right) \right)
\end{equation*}%
\begin{equation*}
\simeq%
Hom_{\mathbf{Set}^{\mathbf{C}}}\left( \mathcal{B},Hom_{\mathbf{K}}\left( G,%
\mathcal{A}\right) \right) .
\end{equation*}
\end{proof}

Proposition below is a variant of Yoneda's Lemma:

\begin{proposition}
\label{Prop-Variant-of-Yoneda}Let $G\in \mathbf{K}$, $U\in \mathbf{C},$ and $%
\mathcal{A}\in \mathbf{K}^{\mathbf{C}^{op}}$. Then%
\begin{equation*}
Hom_{\mathbf{K}^{\mathbf{C}^{op}}}\left( h_{U}\otimes _{\mathbf{Set}}G,%
\mathcal{A}\right) 
\simeq%
Hom_{\mathbf{K}}\left( G,\mathcal{A}\left( U\right) \right) =\left( Hom_{%
\mathbf{K}}\left( G,\mathcal{A}\right) \right) \left( U\right)
\end{equation*}%
naturally on $G$, $U$, and $\mathcal{A}$.
\end{proposition}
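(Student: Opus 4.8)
The plan is to deduce this as a straightforward corollary of Proposition \ref{Prop-Hom-KC-(B-Tensor-Set-G,A)} combined with the classical Yoneda lemma, since all the relevant constructions have already been assembled; this is why the author advertises it as ``a variant of Yoneda's Lemma.''

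First I would invoke Proposition \ref{Prop-Hom-KC-(B-Tensor-Set-G,A)} with $\mathbf{C}$ replaced throughout by $\mathbf{C}^{op}$ and with $\mathcal{B}:=h_U$, which is an object of $\mathbf{Set}^{\mathbf{C}^{op}}$. This substitution is legitimate because $\mathbf{C}$ is small, hence so is $\mathbf{C}^{op}$, and the ends and coends defining the Hom- and tensor-objects remain well defined. Using the second of the two isomorphisms furnished by that proposition, I obtain a natural isomorphism
\begin{equation*}
Hom_{\mathbf{K}^{\mathbf{C}^{op}}}\left( h_U \otimes_{\mathbf{Set}} G, \mathcal{A}\right) \simeq Hom_{\mathbf{Set}^{\mathbf{C}^{op}}}\left( h_U, Hom_{\mathbf{K}}(G, \mathcal{A})\right),
\end{equation*}
where $Hom_{\mathbf{K}}(G, \mathcal{A})$ is the presheaf of sets $V \mapsto Hom_{\mathbf{K}}(G, \mathcal{A}(V))$ introduced earlier (now as an object of $\mathbf{Set}^{\mathbf{C}^{op}}$, by the same variance substitution).

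Second, the right-hand side is precisely a hom out of a representable presheaf, so the Yoneda lemma applies: for any $\mathcal{G} \in \mathbf{Set}^{\mathbf{C}^{op}}$ one has $Hom_{\mathbf{Set}^{\mathbf{C}^{op}}}(h_U, \mathcal{G}) \simeq \mathcal{G}(U)$, naturally in $U$ and $\mathcal{G}$ (this is the full form of the statement underlying the Yoneda embedding $h_?$ recorded earlier). Taking $\mathcal{G} = Hom_{\mathbf{K}}(G, \mathcal{A})$ and unwinding the definition of that presheaf gives
\begin{equation*}
Hom_{\mathbf{Set}^{\mathbf{C}^{op}}}\left( h_U, Hom_{\mathbf{K}}(G, \mathcal{A})\right) \simeq \left(Hom_{\mathbf{K}}(G, \mathcal{A})\right)(U) = Hom_{\mathbf{K}}(G, \mathcal{A}(U)),
\end{equation*}
which is the desired identification.

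Composing the two natural isomorphisms yields the claim. Naturality in $G$ and $\mathcal{A}$ is inherited from the corresponding naturality in Proposition \ref{Prop-Hom-KC-(B-Tensor-Set-G,A)}, while naturality in $U$ comes from the naturality clause in Yoneda's lemma; the variances are consistent because $h_U \otimes_{\mathbf{Set}} G$ is covariant in $U$ whereas $\mathcal{A}(U)$ is contravariant in $U$. The only point requiring a moment's attention is confirming that both isomorphisms respect all three naturalities simultaneously, but since each is natural in its relevant variables no genuine obstacle arises — the result really is a corollary rather than an independent computation.
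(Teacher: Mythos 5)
Your proof is correct and takes essentially the same route as the paper: both arguments deduce the statement from Proposition \ref{Prop-Hom-KC-(B-Tensor-Set-G,A)} applied with $\mathbf{C}$ replaced by $\mathbf{C}^{op}$ and $\mathcal{B}=h_{U}$, followed by a Yoneda identification. The only (immaterial) difference is that the paper uses the first of the two isomorphisms, passing through $Hom_{\mathbf{K}}\left( G,Hom_{\mathbf{Set}^{\mathbf{C}^{op}}}\left( h_{U},\mathcal{A}\right) \right)$ and invoking the $\mathbf{K}$-valued (end) form of the Yoneda isomorphism $Hom_{\mathbf{Set}^{\mathbf{C}^{op}}}\left( h_{U},\mathcal{A}\right) \simeq \mathcal{A}\left( U\right)$, whereas you use the second isomorphism, passing through $Hom_{\mathbf{Set}^{\mathbf{C}^{op}}}\left( h_{U},Hom_{\mathbf{K}}\left( G,\mathcal{A}\right) \right)$ and applying the ordinary $\mathbf{Set}$-valued Yoneda lemma.
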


\begin{proof}
Using the Yoneda isomorphism $Hom_{\mathbf{Set}^{\mathbf{C}^{op}}}\left(
h_{U},\mathcal{A}\right) 
\simeq%
\mathcal{A}\left( U\right) $, and Proposition \ref%
{Prop-Hom-KC-(B-Tensor-Set-G,A)}, one gets%
\begin{equation*}
Hom_{\mathbf{K}^{\mathbf{C}^{op}}}\left( h_{U}\otimes _{\mathbf{Set}}G,%
\mathcal{A}\right) 
\simeq%
Hom_{\mathbf{K}}\left( G,Hom_{\mathbf{Set}^{\mathbf{C}^{op}}}\left( h_{U},%
\mathcal{A}\right) \right) 
\simeq%
Hom_{\mathbf{K}}\left( G,\mathcal{A}\left( U\right) \right) .
\end{equation*}
\end{proof}

\section{Grothendieck topologies and (pre)sheaves}

\begin{definition}
A \textbf{sieve} $R$ over $U\in \mathbf{C}$ is a subfunctor $R\subseteq
h_{U} $ of 
\begin{equation*}
h_{U}=Hom_{\mathbf{C}}\left( \_,U\right) :\mathbf{C}^{op}\longrightarrow 
\mathbf{Set.}
\end{equation*}
\end{definition}

\begin{remark}
Compare with \cite[Definition 16.1.1]{Kashiwara-Categories-MR2182076}.
\end{remark}

\begin{definition}
\label{Def-Site}A Grothendieck site (or simply a \textbf{site}) $X$ is a
pair $\left( \mathbf{C}_{X},Cov\left( X\right) \right) $ where $\mathbf{C}%
_{X}$ is a category, and%
\begin{equation*}
Cov\left( X\right) =\dbigcup\limits_{U\in \mathbf{C}_{X}}Cov\left( U\right) ,
\end{equation*}%
where $Cov\left( U\right) $ are the sets of \textbf{covering sieves} over $U$%
, satisfying the axioms GT1-GT4 from \cite[Definition 16.1.2]%
{Kashiwara-Categories-MR2182076}, or, equivalently, the axioms T1-T3 from 
\cite[Definition II.1.1]{SGA4-1-MR0354652}. The site is called \textbf{small}
iff $\mathbf{C}_{X}$ is a small category.
\end{definition}

\begin{remark}
The class (or a set, if $X$ is small) $Cov\left( X\right) $ is called the 
\textbf{topology} on $X$.
\end{remark}

\begin{notation}
Given $U\in \mathbf{C}_{X}$, and $R\in Cov\left( X\right) $, denote simply%
\begin{equation*}
\mathbf{C}_{U}%
{:=}%
\left( \mathbf{C}_{X}\right) _{U},~\mathbf{C}_{R}%
{:=}%
\left( \mathbf{C}_{X}\right) _{R}
\end{equation*}%
(see Definitions \ref{Def-Comma-U} and \ref{Def-Comma-R}).
\end{notation}

\begin{proposition}
Let $G\in \mathbf{K}$, and let $R\subseteq h_{U}$ be a sieve. Then

\begin{enumerate}
\item 
\begin{equation*}
Hom_{\mathbf{K}^{\mathbf{C}^{op}}}\left( G\otimes _{\mathbf{Set}}R,\mathcal{A%
}\right) 
\simeq%
Hom_{\mathbf{K}}\left( G,Hom_{\mathbf{Set}^{\mathbf{C}^{op}}}\left( R,%
\mathcal{A}\right) \right) 
\simeq%
Hom_{\mathbf{K}}\left( G,\underset{\left( V\rightarrow U\right) \in \mathbf{C%
}_{R}}{\underleftarrow{\lim }}\mathcal{A}\left( V\right) \right) .
\end{equation*}

\item 
\begin{equation*}
Hom_{\mathbf{Set}^{\mathbf{C}^{op}}}\left( R,\mathcal{A}\right) 
\simeq%
\underset{\left( V\rightarrow U\right) \in \mathbf{C}_{R}}{\underleftarrow{%
\lim }}\mathcal{A}\left( V\right) .
\end{equation*}
\end{enumerate}
\end{proposition}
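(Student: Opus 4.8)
The plan is to establish part (2) first, because once it is in hand the two isomorphisms of part (1) follow formally. Throughout, $\mathbf{K}$ is complete and cocomplete (so that the $Hom$- and $\otimes$-constructions make sense), $\mathcal{A}\in\mathbf{K}^{\mathbf{C}^{op}}$, and $R\subseteq h_U$ is regarded as an object of $\mathbf{Set}^{\mathbf{C}^{op}}$. The content of part (2) is the standard identification of the hom-object out of a presheaf with a limit of $\mathcal{A}$ taken over the comma category $\mathbf{C}_R$ (Definition \ref{Def-Comma-R}).

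For part (2) I would unwind the end that defines $Hom_{\mathbf{Set}^{\mathbf{C}^{op}}}(R,\mathcal{A})$, applying the defining equalizer with $\mathbf{C}$ replaced by $\mathbf{C}^{op}$ and $\mathcal{B}:=R$. The key reindexing is $Hom_{\mathbf{Set}}(R(V),\mathcal{A}(V))=\dprod_{\alpha\in R(V)}\mathcal{A}(V)$, so that the left-hand factor $\dprod_V Hom_{\mathbf{Set}}(R(V),\mathcal{A}(V))$ becomes $\dprod_{(V,\alpha)}\mathcal{A}(V)$, indexed by $Ob(\mathbf{C}_R)$. A generalized element of the equalizer is then a family $\left(p_{(V,\alpha)}\right)$ of morphisms into the $\mathcal{A}(V)$, and I would check that the two parallel maps of the end agree on it exactly when $\mathcal{A}(\beta)\circ p_{(V_2,\alpha_2)}=p_{(V_1,\alpha_1)}$ for every morphism $\beta\colon(V_1,\alpha_1)\to(V_2,\alpha_2)$ of $\mathbf{C}_R$, i.e. every $\beta\colon V_1\to V_2$ with $\alpha_2\circ\beta=\alpha_1$. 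These are precisely the compatibility conditions of a cone over the diagram $(V,\alpha)\mapsto\mathcal{A}(V)$, so the equalizer is the limit $\underleftarrow{\lim}_{(V\to U)\in\mathbf{C}_R}\mathcal{A}(V)$. Conceptually this is the co-Yoneda (density) presentation $R\simeq\underrightarrow{\lim}_{(V,\alpha)\in\mathbf{C}_R}h_V$ together with the Yoneda isomorphism $Hom_{\mathbf{Set}^{\mathbf{C}^{op}}}(h_V,\mathcal{A})\simeq\mathcal{A}(V)$; one may prefer to argue that way and let $Hom(\_,\mathcal{A})$ convert the colimit into the stated limit.

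For part (1), the first isomorphism is nothing but Proposition \ref{Prop-Hom-KC-(B-Tensor-Set-G,A)} applied verbatim with $\mathbf{C}$ replaced by $\mathbf{C}^{op}$ and $\mathcal{B}:=R$, using $G\otimes_{\mathbf{Set}}R=R\otimes_{\mathbf{Set}}G$. The second isomorphism is then obtained by applying the functor $Hom_{\mathbf{K}}(G,\_)$ to the isomorphism of part (2). The only real work is in part (2); I expect the main obstacle there to be the variance bookkeeping---keeping straight that the morphisms of $\mathbf{C}_R$ are arrows $V_1\to V_2$ over $U$ while $\mathcal{A}$ and $R$ are contravariant---and confirming that the equalizer relations coincide on the nose with the cone conditions, rather than merely after reindexing.
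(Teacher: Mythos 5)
Your argument is correct, but it runs in the opposite logical direction from the paper's. The paper proves part (1) first, as a chain of natural isomorphisms: Proposition \ref{Prop-Hom-KC-(B-Tensor-Set-G,A)} gives both $Hom_{\mathbf{K}}\left( G,Hom_{\mathbf{Set}^{\mathbf{C}^{op}}}\left( R,\mathcal{A}\right) \right)$ and $Hom_{\mathbf{Set}^{\mathbf{C}^{op}}}\left( R,Hom_{\mathbf{K}}\left( G,\mathcal{A}\right) \right)$; the latter is identified with $\underleftarrow{\lim }_{\mathbf{C}_{R}}Hom_{\mathbf{K}}\left( G,\mathcal{A}\left( V\right) \right)$ by citing the \emph{set-valued} case (SGA4, Corollary I.3.5) for the presheaf of sets $Hom_{\mathbf{K}}\left( G,\mathcal{A}\right)$, and then $Hom_{\mathbf{K}}\left( G,\_\right)$ is pulled out of the limit. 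Part (2) is then deduced from part (1) by observing that $h_{K}\simeq h_{L}$ naturally in $G$ and invoking Yoneda. You instead prove part (2) directly in $\mathbf{K}$ by unwinding the end as an equalizer, reindexing $\dprod_{V}\dprod_{R\left( V\right) }\mathcal{A}\left( V\right)$ over $Ob\left( \mathbf{C}_{R}\right)$ and matching the dinaturality conditions with the cone conditions (equivalently, via the density presentation of $R$ as a colimit of representables), and then obtain part (1) from Proposition \ref{Prop-Hom-KC-(B-Tensor-Set-G,A)} together with an application of $Hom_{\mathbf{K}}\left( G,\_\right)$ to the isomorphism of part (2). Both routes are sound and use the same two ingredients (the tensor--hom adjunction of Proposition \ref{Prop-Hom-KC-(B-Tensor-Set-G,A)} and the comma-category limit formula); the paper's version outsources the limit formula to the set-valued literature and recovers the $\mathbf{K}$-valued statement by a representability trick, whereas yours is self-contained at the cost of the explicit variance bookkeeping in the end, which you correctly identify as the only delicate point.
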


\begin{proof}
~

\begin{enumerate}
\item It follows from Proposition \ref{Prop-Hom-KC-(B-Tensor-Set-G,A)} and 
\cite[Corollary I.3.5]{SGA4-1-MR0354652}, that, naturally on $G\in \mathbf{K}
$,%
\begin{eqnarray*}
&&Hom_{\mathbf{K}^{\mathbf{C}^{op}}}\left( G\otimes _{\mathbf{Set}}R,%
\mathcal{A}\right) 
\simeq%
Hom_{\mathbf{K}}\left( G,Hom_{\mathbf{Set}^{\mathbf{C}^{op}}}\left( R,%
\mathcal{A}\right) \right) \\
&&%
\simeq%
Hom_{\mathbf{Set}^{\mathbf{C}^{op}}}\left( R,Hom_{\mathbf{K}}\left( G,%
\mathcal{A}\right) \right) 
\simeq%
\underset{\left( V\rightarrow U\right) \in \mathbf{C}_{R}}{\underleftarrow{%
\lim }}Hom_{\mathbf{K}}\left( G,\mathcal{A}\right) \left( V\right) \\
&&%
\simeq%
\underset{\left( V\rightarrow U\right) \in \mathbf{C}_{R}}{\underleftarrow{%
\lim }}Hom_{\mathbf{K}}\left( G,\mathcal{A}\left( V\right) \right) 
\simeq%
Hom_{\mathbf{K}}\left( G,\underset{\left( V\rightarrow U\right) \in \mathbf{C%
}_{R}}{\underleftarrow{\lim }}\mathcal{A}\left( V\right) \right) .
\end{eqnarray*}

\item Let%
\begin{equation*}
K=Hom_{\mathbf{Set}^{\mathbf{C}^{op}}}\left( R,\mathcal{A}\right) \in 
\mathbf{K},~L=\underset{\left( V\rightarrow U\right) \in \mathbf{C}_{R}}{%
\underleftarrow{\lim }}\mathcal{A}\left( V\right) \in \mathbf{K}.
\end{equation*}%
We have just proved that $h_{K}%
\simeq%
h_{L}\in \mathbf{Set}^{\mathbf{K}^{op}}$. It follows from Yoneda's lemma
that $K%
\simeq%
L$.
\end{enumerate}
\end{proof}

\begin{definition}
\label{Def-Pretopology}We say that the topology on a small site $X$ is
induced by a \textbf{pretopology} if each object $U\in \mathbf{C}_{X}$ is
supplied with \textbf{covers} $\left\{ U_{i}\rightarrow U\right\} _{i\in I}$%
, satisfying \cite[Definition II.1.3]{SGA4-1-MR0354652} (compare to \cite[%
Definition 16.1.5]{Kashiwara-Categories-MR2182076}), and the covering sieves 
$R\in Cov\left( X\right) $ are \textbf{generated} by covers:%
\begin{equation*}
R=R_{\left\{ U_{i}\rightarrow U\right\} }\subseteq h_{U},
\end{equation*}%
where $R_{\left\{ U_{i}\rightarrow U\right\} }\left( V\right) $ consists of
morphisms $\left( V\rightarrow U\right) \in h_{U}\left( V\right) $ admitting
a decomposition%
\begin{equation*}
\left( V\rightarrow U\right) =\left( V\rightarrow U_{i}\rightarrow U\right) .
\end{equation*}
\end{definition}

\begin{remark}
We use the word \textbf{covers} for general sites, and reserve the word 
\textbf{coverings} for open coverings of topological spaces.
\end{remark}

\begin{example}
\label{Site-TOP}Let $X$ be a topological space. We will call the site $%
OPEN\left( X\right) $ below the \textbf{standard site} for $X$:%
\begin{equation*}
OPEN\left( X\right) =\left( \mathbf{C}_{OPEN\left( X\right) },Cov\left(
OPEN\left( X\right) \right) \right) .
\end{equation*}%
$\mathbf{C}_{OPEN\left( X\right) }$ has open subsets of $X$ as objects and
inclusions $U\subseteq V$ as morphisms. The pretopology on $OPEN\left(
X\right) $ consists of families%
\begin{equation*}
\left\{ U_{i}\subseteq U\right\} _{i\in I}\in \mathbf{C}_{OPEN\left(
X\right) }
\end{equation*}%
with%
\begin{equation*}
\dbigcup\limits_{i\in I}U_{i}=U.
\end{equation*}%
The corresponding topology consists of sieves $R_{\left\{ U_{i}\subseteq
U\right\} }\subseteq h_{U}$ where%
\begin{equation*}
\left( V\subseteq U\right) \in R_{\left\{ U_{i}\subseteq U\right\} }\left(
U\right) \iff \exists i\in I~\left( V\subseteq U_{i}\right) .
\end{equation*}
\end{example}

\begin{remark}
\label{Denote-standard-site-simply}We will often denote the standard site
simply by $X=\left( \mathbf{C}_{X},Cov\left( X\right) \right) $.
\end{remark}

\begin{example}
\label{Site-NORM}Let again $X$ be a topological space. Consider the site%
\begin{equation*}
NORM\left( X\right) =\left( \mathbf{C}_{NORM\left( X\right) },Cov\left(
NORM\left( X\right) \right) \right)
\end{equation*}%
where $\mathbf{C}_{NORM\left( X\right) }=\mathbf{C}_{X}$, while the
pretopology on $NORM\left( X\right) $ consists of \textbf{normal}
(Definition \ref{Def-Normal-covering}) coverings $\left\{ U_{i}\subseteq
U\right\} $.
\end{example}

\begin{example}
\label{Site-FINITE}Let again $X$ be a topological space. Consider the site%
\begin{equation*}
FINITE\left( X\right) =\left( \mathbf{C}_{FINITE\left( X\right) },Cov\left(
FINITE\left( X\right) \right) \right)
\end{equation*}%
where $\mathbf{C}_{FINITE\left( X\right) }=\mathbf{C}_{X}$, while the
pretopology on $FINITE\left( X\right) $ consists of \textbf{finite} normal
coverings $\left\{ U_{i}\subseteq U\right\} $.
\end{example}

\begin{example}
\label{Equivariant}Let $G$ be a topological group, and $X$ be a $G$-space.
The corresponding site $OPEN_{G}\left( X\right) $ has $G$-invariant open
subsets of $X$ as objects of $\mathbf{C}_{OPEN_{G}\left( X\right) }$ and the
pretopology consisting of $G$-invariant open coverings (compare to \cite[%
Example 1.1.4]{Artin-GT}, or \cite[Example (1.3.2)]{Tamme-MR1317816}).
\end{example}

\begin{example}
\label{Site-ETALE}Let $X$ be a noetherian scheme, and define the site $%
X^{et} $ by: $\mathbf{C}_{X^{et}}$ is the category of schemes $Y/X$ \'{e}%
tale, finite type, while the pretopology on $X^{et}$ consists of finite
surjective families of maps. See \cite[Example 1.1.6]{Artin-GT}, or \cite[%
II.1.2]{Tamme-MR1317816}.
\end{example}

\begin{definition}
\label{Def-(pre)sheaves}Let $X=\left( \mathbf{C}_{X},Cov\left( X\right)
\right) $ be a small site, and let $\mathbf{K}$ be a complete category.

\begin{enumerate}
\item A \textbf{presheaf} $\mathcal{A}$ on $X$ with values in $\mathbf{K}$
is a functor $\mathcal{A}:\left( \mathbf{C}_{X}\right) ^{op}\rightarrow 
\mathbf{K.}$

\item A presheaf $\mathcal{A}$ is \textbf{separated} provided that for any $%
U\in \mathbf{C}_{X}$%
\begin{equation*}
\mathcal{A}\left( U\right) =Hom_{\mathbf{Set}^{\left( \mathbf{C}_{X}\right)
^{op}}}\left( h_{U},\mathcal{A}\right) \longrightarrow Hom_{\mathbf{Set}%
^{\left( \mathbf{C}_{X}\right) ^{op}}}\left( R,\mathcal{A}\right)
\end{equation*}%
is a monomorphism for any covering sieve $R$ over $U$.

\item A presheaf $\mathcal{A}$ is a \textbf{sheaf} provided that for any $%
U\in \mathbf{C}_{X}$%
\begin{equation*}
\mathcal{A}\left( U\right) =Hom_{\mathbf{Set}^{\left( \mathbf{C}_{X}\right)
^{op}}}\left( h_{U},\mathcal{A}\right) \longrightarrow Hom_{\mathbf{Set}%
^{\left( \mathbf{C}_{X}\right) ^{op}}}\left( R,\mathcal{A}\right)
\end{equation*}%
is an isomorphism for any covering sieve $R$ over $U$.
\end{enumerate}
\end{definition}

\begin{notation}
\label{Not-(Pre)sheaves}Let $\mathbf{pS}\left( X,\mathbf{K}\right) =\mathbf{K%
}^{\left( \mathbf{C}_{X}\right) ^{op}}$ be the category of presheaves on $X$
with values in $\mathbf{K}$, and let $\mathbf{S}\left( X,\mathbf{K}\right) $
be the full subcategory of sheaves.
\end{notation}

\begin{proposition}
\label{Prop-Pretopology}Let $X$ be a small site with a pretopology
(Definition \ref{Def-Pretopology}). Then for any presheaf $\mathcal{A}$ with
values in a complete category $\mathbf{K}$, and for any sieve $R$ generated
by a cover $\left\{ U_{i}\rightarrow U\right\} _{i\in I}$,%
\begin{equation*}
Hom_{\mathbf{Set}^{\mathbf{C}^{op}}}\left( R,\mathcal{A}\right) 
\simeq%
\underset{\left( V\rightarrow U\right) \in \mathbf{C}_{R}}{\underleftarrow{%
\lim }}\mathcal{A}\left( V\right) 
\simeq%
\ker \left( \dprod\limits_{i\in I}\mathcal{A}\left( U_{i}\right)
\rightrightarrows \dprod\limits_{i,j\in I}\mathcal{A}\left( U_{i}\times
_{U}U_{j}\right) \right) .
\end{equation*}
\end{proposition}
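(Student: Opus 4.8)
The plan is to establish the two displayed isomorphisms in turn. The first one, $Hom_{\mathbf{Set}^{\mathbf{C}^{op}}}\left( R,\mathcal{A}\right) \simeq \underset{\left( V\rightarrow U\right) \in \mathbf{C}_{R}}{\underleftarrow{\lim }}\mathcal{A}\left( V\right)$, holds for an \emph{arbitrary} sieve $R$ over $U$ and is exactly part (2) of the preceding Proposition, so I would simply invoke it. The entire content of this Proposition therefore lies in identifying that limit with the \v{C}ech kernel under the extra hypothesis that $R$ is generated by a cover $\left\{ U_{i}\rightarrow U\right\} _{i\in I}$.

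The key step is a presentation of the sieve $R$ as a coequalizer. Because the pretopology axioms (Definition \ref{Def-Pretopology}) guarantee that the fibre products $U_{i}\times _{U}U_{j}$ exist in $\mathbf{C}$, I claim that in $\mathbf{Set}^{\mathbf{C}^{op}}$ the sieve $R$ is the coequalizer of
\[
\coprod_{i,j\in I}h_{U_{i}\times _{U}U_{j}}\rightrightarrows \coprod_{i\in I}h_{U_{i}}\longrightarrow R,
\]
where the two left-hand arrows are induced by the two projections $\mathrm{pr}_{1}\colon U_{i}\times _{U}U_{j}\rightarrow U_{i}$ and $\mathrm{pr}_{2}\colon U_{i}\times _{U}U_{j}\rightarrow U_{j}$. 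Since colimits of presheaves are computed objectwise, I would verify this at each $V\in \mathbf{C}$: the map $\coprod_{i}Hom_{\mathbf{C}}\left( V,U_{i}\right) \rightarrow R\left( V\right)$ sending $\left( i,f\right)$ to the composite $V\rightarrow U_{i}\rightarrow U$ is surjective \emph{precisely} because $R$ is generated by the cover, and two elements $\left( i,f\right) ,\left( j,g\right)$ have the same image iff the induced morphisms $V\rightarrow U$ coincide, which by the universal property of $U_{i}\times _{U}U_{j}$ holds iff $f$ and $g$ are jointly the projections of a (unique) morphism $V\rightarrow U_{i}\times _{U}U_{j}$. Hence the equivalence relation generated by the two projection maps is exactly the kernel pair of $\coprod_{i}h_{U_{i}}\rightarrow R$, which yields the coequalizer.

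To conclude, I would apply the contravariant functor $Hom_{\mathbf{Set}^{\mathbf{C}^{op}}}\left( -,\mathcal{A}\right)$. This functor is assembled objectwise from $Hom_{\mathbf{Set}}\left( -,\mathcal{A}\left( U\right) \right) =\prod_{\left( -\right) }\mathcal{A}\left( U\right)$, which sends colimits of sets to limits in $\mathbf{K}$; consequently $Hom_{\mathbf{Set}^{\mathbf{C}^{op}}}\left( -,\mathcal{A}\right)$ carries $\mathbf{Set}^{\mathbf{C}^{op}}$-colimits to $\mathbf{K}$-limits and coproducts to products (completeness of $\mathbf{K}$ being what makes these limits exist). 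Using the representable evaluation $Hom_{\mathbf{Set}^{\mathbf{C}^{op}}}\left( h_{W},\mathcal{A}\right) \simeq \mathcal{A}\left( W\right)$, which is the special case $R=h_{W}$ of the first isomorphism since $\mathbf{C}_{h_{W}}=\mathbf{C}_{W}$ has the terminal object $\mathrm{id}_{W}$, the coequalizer is turned into the kernel
\[
Hom_{\mathbf{Set}^{\mathbf{C}^{op}}}\left( R,\mathcal{A}\right) \simeq \ker \Big( \prod_{i}\mathcal{A}\left( U_{i}\right) \rightrightarrows \prod_{i,j}\mathcal{A}\left( U_{i}\times _{U}U_{j}\right) \Big),
\]
the two maps being $\mathcal{A}\left( \mathrm{pr}_{1}\right)$ and $\mathcal{A}\left( \mathrm{pr}_{2}\right)$. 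Chaining this with the first isomorphism finishes the proof.

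The formal part — continuity of $Hom\left( -,\mathcal{A}\right)$ and the representable evaluation — is routine. The one step that genuinely uses the hypotheses, and which I expect to be the main obstacle, is the coequalizer presentation of $R$: one must check that the equivalence relation generated by the two projections is not merely contained in, but \emph{equal} to, the kernel pair of $\coprod_{i}h_{U_{i}}\rightarrow R$. This is exactly where the universal property of the fibre products $U_{i}\times _{U}U_{j}$ is indispensable, and where the existence of those fibre products (supplied by the pretopology) cannot be dispensed with.
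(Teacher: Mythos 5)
Your proof is correct, but it takes a genuinely different route from the paper's. The paper sets $\mathcal{B}=Hom_{\mathbf{K}}\left( G,\mathcal{A}\right) $ for a varying test object $G\in \mathbf{K}$, observes that $\mathcal{B}$ is a presheaf of \emph{sets}, invokes \cite[Proposition I.2.12]{SGA4-1-MR0354652} for the set-valued statement, and then transports the resulting chain of natural bijections back to $\mathbf{K}$ by Yoneda's lemma --- the same ``probe with $Hom_{\mathbf{K}}\left( G,\_\right) $, then apply Yoneda'' reduction that recurs throughout the paper. You instead prove the combinatorial core directly: you exhibit the sieve $R$ as the coequalizer of $\coprod_{i,j}h_{U_{i}\times _{U}U_{j}}\rightrightarrows \coprod_{i}h_{U_{i}}$ in $\mathbf{Set}^{\mathbf{C}^{op}}$ (checked objectwise, using that the parallel pair is precisely the kernel pair of the surjection onto $R$, by the universal property of the fibre products supplied by the pretopology), and then apply the continuity of $Hom_{\mathbf{Set}^{\mathbf{C}^{op}}}\left( \_,\mathcal{A}\right) $ together with the enriched Yoneda evaluation $Hom_{\mathbf{Set}^{\mathbf{C}^{op}}}\left( h_{W},\mathcal{A}\right) \simeq \mathcal{A}\left( W\right) $, which you correctly recover from the preceding proposition applied to the maximal sieve. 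In effect you have reproved the cited SGA4 proposition rather than quoting it. The paper's route is shorter and keeps all the site-theoretic combinatorics inside the reference; yours is self-contained and makes visible exactly where the existence of the fibre products $U_{i}\times _{U}U_{j}$ (guaranteed by the pretopology axioms) is used. Both arguments are sound.
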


\begin{proof}
Let $G\in \mathbf{K}$, and $\mathcal{B}=Hom_{\mathbf{K}}\left( G,\mathcal{A}%
\right) $. $\mathcal{B}$ is a presheaf of sets on $X$. It follows from \cite[%
Proposition I.2.12]{SGA4-1-MR0354652}, that 
\begin{eqnarray*}
&&Hom_{\mathbf{C}}\left( G,Hom_{\mathbf{Set}^{\mathbf{C}^{op}}}\left( R,%
\mathcal{A}\right) \right) 
\simeq%
Hom_{\mathbf{Set}^{\mathbf{C}^{op}}}\left( R,\mathcal{B}\right) 
\simeq%
\underset{\left( V\rightarrow U\right) \in \mathbf{C}_{R}}{\underleftarrow{%
\lim }}\mathcal{B}\left( V\right) \\
&&%
\simeq%
Hom_{\mathbf{C}}\left( G,\underset{\left( V\rightarrow U\right) \in \mathbf{C%
}_{R}}{\underleftarrow{\lim }}\mathcal{A}\left( V\right) \right) 
\simeq%
\ker \left( \dprod\limits_{i\in I}\mathcal{B}\left( U_{i}\right)
\rightrightarrows \dprod\limits_{i,j\in I}\mathcal{B}\left( U_{i}\times
_{U}U_{j}\right) \right) \\
&&%
\simeq%
Hom_{\mathbf{K}}\left( G,\ker \left( \dprod\limits_{i\in I}\mathcal{A}\left(
U_{i}\right) \rightrightarrows \dprod\limits_{i,j\in I}\mathcal{A}\left(
U_{i}\times _{U}U_{j}\right) \right) \right) ,
\end{eqnarray*}%
naturally on $G$. Apply Yoneda's lemma.
\end{proof}

\begin{definition}
\label{Def-Plus-sheaf}Let $X$ be a small site, and $\mathbf{K}$ be a locally
finitely presentable category. For a presheaf $\mathcal{A}$, define a
presheaf $\left( \mathcal{A}\right) _{\mathbf{K}}^{+}$ (or simply $\mathcal{A%
}^{+})$ by the following:%
\begin{equation*}
\mathcal{A}^{+}\left( U\right) =\underset{R}{\underrightarrow{\lim }}~%
\underset{\left( V\rightarrow U\right) \in \mathbf{C}_{R}}{\underleftarrow{%
\lim }}~\mathcal{A}\left( V\right) =\underset{R}{\underrightarrow{\lim }}%
~Hom_{\mathbf{Set}^{\left( \mathbf{C}_{X}\right) ^{op}}}\left( R,\mathcal{A}%
\right)
\end{equation*}%
where $R\subseteq h^{U}$ runs over all covering sieves over $U$.
\end{definition}

\begin{proposition}
If the topology is induced by a pretopology (Definition \ref{Def-Pretopology}%
), then%
\begin{equation*}
\mathcal{A}^{+}\left( U\right) =\underset{\left\{ U_{i}\rightarrow U\right\} 
}{\underrightarrow{\lim }}~\ker \left( \dprod\limits_{i}\mathcal{A}\left(
U_{i}\right) \rightrightarrows \dprod\limits_{i,j}\mathcal{A}\left(
U_{i}\times _{U}U_{j}\right) \right)
\end{equation*}%
where $\left\{ U_{i}\rightarrow U\right\} $ runs over the covers of $U$.
\end{proposition}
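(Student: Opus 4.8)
The plan is to feed the explicit description of $Hom_{\mathbf{Set}^{\mathbf{C}^{op}}}(R,\mathcal{A})$ from Proposition \ref{Prop-Pretopology} into the defining colimit of the $+$-construction and then to re-index that colimit from covering sieves to covers. Recall from Definition \ref{Def-Plus-sheaf} that
\[
\mathcal{A}^{+}(U)=\underset{R}{\underrightarrow{\lim}}~Hom_{\mathbf{Set}^{\mathbf{C}^{op}}}(R,\mathcal{A}),
\]
the colimit being taken over all covering sieves $R$ over $U$, which we order by refinement (so that $R'$ comes after $R$ when $R'\subseteq R$, the transition map being restriction along $R'\hookrightarrow R$). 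Since finite intersections of covering sieves are covering by the Grothendieck axioms, this is a directed (filtered) colimit. Proposition \ref{Prop-Pretopology} already provides, for any sieve $R=R_{\{U_i\to U\}}$ generated by a cover, a natural isomorphism $Hom_{\mathbf{Set}^{\mathbf{C}^{op}}}(R,\mathcal{A})\simeq\ker(\prod_i\mathcal{A}(U_i)\rightrightarrows\prod_{i,j}\mathcal{A}(U_i\times_U U_j))$; so the entire content of the proposition is the passage from a colimit over sieves to a colimit over covers.

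First I would organise the two index systems. The covers $\{U_i\to U\}$ of $U$ form a directed system under refinement: any two covers admit a common refinement $\{U_i\times_U V_j\to U\}$ by the pretopology axioms. The assignment $\Phi$ sending a cover to the sieve $R_{\{U_i\to U\}}$ it generates (Definition \ref{Def-Pretopology}) is monotone, because refining a cover shrinks the generated sieve. Thus $\Phi$ is a functor from the directed poset of covers to the directed poset of covering sieves.

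Next I would show that $\Phi$ is \emph{final}, so that precomposition with $\Phi$ leaves colimits unchanged. This is where the pretopology hypothesis enters decisively: by Definition \ref{Def-Pretopology} every covering sieve $R$ has the form $R_{\{U_i\to U\}}$, so $R$ itself lies in the image of $\Phi$, which makes every comma category $(R\downarrow\Phi)$ nonempty; connectedness then follows from directedness of the poset of covers, by passing to common refinements. Finality gives
\[
\mathcal{A}^{+}(U)=\underset{R}{\underrightarrow{\lim}}~Hom_{\mathbf{Set}^{\mathbf{C}^{op}}}(R,\mathcal{A})\simeq\underset{\{U_i\to U\}}{\underrightarrow{\lim}}~Hom_{\mathbf{Set}^{\mathbf{C}^{op}}}(R_{\{U_i\to U\}},\mathcal{A}),
\]
and substituting the kernel description from Proposition \ref{Prop-Pretopology} into the right-hand side yields exactly the claimed formula.

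The step I expect to be the main obstacle is the finality bookkeeping rather than any computation: one must be careful that distinct covers may generate the same sieve, and that the isomorphism of Proposition \ref{Prop-Pretopology} is natural with respect to refinement morphisms, so that the diagram $\{U_i\to U\}\mapsto\ker(\dots)$ on covers really is, up to natural isomorphism, the restriction along $\Phi$ of the diagram $R\mapsto Hom_{\mathbf{Set}^{\mathbf{C}^{op}}}(R,\mathcal{A})$ on sieves. Once that compatibility is pinned down, the coincidence of the two colimits is a formal consequence of finality.
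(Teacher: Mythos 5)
Your proposal is correct and follows the same route as the paper, whose entire proof is the single line ``Follows from Proposition \ref{Prop-Pretopology}''; you are simply making explicit the cofinality/reindexing step (from covering sieves to generating covers) that the paper leaves implicit. The finality argument you sketch is exactly the right justification and is sound, since under a pretopology every covering sieve is generated by a cover and common refinements make the relevant comma categories nonempty and connected.
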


\begin{proof}
Follows from Proposition \ref{Prop-Pretopology}.
\end{proof}

\begin{proposition}
\label{Prop-Hom(G,_)}~

\begin{enumerate}
\item If $G\in \mathbf{K}$ is a finitely presentable object, then there is a
natural isomorphism%
\begin{equation*}
Hom_{\mathbf{K}}\left( G,\mathcal{A}^{+}\right) 
\simeq%
Hom_{\mathbf{K}}\left( G,\mathcal{A}\right) ^{+}.
\end{equation*}

\item If $\mathfrak{G}\subseteq \mathbf{K}$ is a \textbf{strong generator} 
\cite[Definition 0.6]%
{Adamek-Rosicky-1994-Locally-presentable-categories-MR1294136}, then:

\begin{enumerate}
\item $\mathcal{A}\in \mathbf{pS}\left( X,\mathbf{K}\right) $ is separated
iff $Hom_{\mathbf{K}}\left( G,\mathcal{A}\right) \in \mathbf{pS}\left( X,%
\mathbf{Set}\right) $ is separated for any $G\in \mathfrak{G}$.

\item $\mathcal{A}\in \mathbf{pS}\left( X,\mathbf{K}\right) $ is a sheaf iff 
$Hom_{\mathbf{K}}\left( G,\mathcal{A}\right) \in \mathbf{pS}\left( X,\mathbf{%
Set}\right) $ is a sheaf for any $G\in \mathfrak{G}$.
\end{enumerate}
\end{enumerate}
\end{proposition}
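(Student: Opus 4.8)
The plan is to reduce every assertion to the single comparison morphism
\[
\phi_{U,R}\colon \mathcal{A}(U)\longrightarrow Hom_{\mathbf{Set}^{\left( \mathbf{C}_X\right)^{op}}}\left(R,\mathcal{A}\right),
\]
since its behaviour encodes separatedness (it is a monomorphism), the sheaf condition (it is an isomorphism), and, after taking the colimit over $R$, the functor $\mathcal{A}^{+}$. The one categorical input I would invoke throughout is the natural isomorphism
\[
Hom_{\mathbf{K}}\left(G, Hom_{\mathbf{Set}^{\left( \mathbf{C}_X\right)^{op}}}\left(R,\mathcal{A}\right)\right)\simeq Hom_{\mathbf{Set}^{\left( \mathbf{C}_X\right)^{op}}}\left(R, Hom_{\mathbf{K}}\left(G,\mathcal{A}\right)\right)
\]
furnished by Proposition \ref{Prop-Hom-KC-(B-Tensor-Set-G,A)} (with $\mathbf{C}$ replaced by $\mathbf{C}^{op}$ and $\mathcal{B}=R\subseteq h_{U}$). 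Its content is precisely that applying $Hom_{\mathbf{K}}(G,-)$ to $\phi_{U,R}$ yields the comparison morphism of the \emph{set}-valued presheaf $Hom_{\mathbf{K}}(G,\mathcal{A})$ at $U$ and $R$.

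For part (1) I would write $\mathcal{A}^{+}(U)=\underrightarrow{\lim}_{R} Hom_{\mathbf{Set}^{\left( \mathbf{C}_X\right)^{op}}}(R,\mathcal{A})$ and push $Hom_{\mathbf{K}}(G,-)$ through in two moves. First, the indexing colimit is filtered: by the Grothendieck topology axioms the covering sieves over $U$ are closed under finite intersection, so they form a cofiltered poset, and the diagram indexed by reverse inclusion (with the restriction maps) is directed; since $G$ is finitely presentable, $Hom_{\mathbf{K}}(G,-)$ commutes with this colimit. Second, inside the colimit I replace each $Hom_{\mathbf{K}}(G, Hom_{\mathbf{Set}^{\left( \mathbf{C}_X\right)^{op}}}(R,\mathcal{A}))$ by $Hom_{\mathbf{Set}^{\left( \mathbf{C}_X\right)^{op}}}(R, Hom_{\mathbf{K}}(G,\mathcal{A}))$ via the displayed isomorphism. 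What results is $\underrightarrow{\lim}_{R} Hom_{\mathbf{Set}^{\left( \mathbf{C}_X\right)^{op}}}(R, Hom_{\mathbf{K}}(G,\mathcal{A}))=Hom_{\mathbf{K}}(G,\mathcal{A})^{+}(U)$, and naturality in $G$ and $U$ is inherited from the naturality of the isomorphisms used.

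For part (2) the forward implications are immediate: $Hom_{\mathbf{K}}(G,-)$ is representable and so preserves all limits, in particular monomorphisms and isomorphisms; hence if $\phi_{U,R}$ is a monomorphism (resp.\ isomorphism), then $Hom_{\mathbf{K}}(G,\phi_{U,R})$ is injective (resp.\ bijective), which by the displayed isomorphism says exactly that $Hom_{\mathbf{K}}(G,\mathcal{A})$ is separated (resp.\ a sheaf). For the converses I would use that a strong generator $\mathfrak{G}$ is in particular a generating family, so the functors $\{Hom_{\mathbf{K}}(G,-)\}_{G\in\mathfrak{G}}$ are jointly faithful; a short diagram chase shows that such a family reflects monomorphisms, giving (a). Part (b) is where the stronger hypothesis is genuinely used: a strong generator is jointly conservative, so $Hom_{\mathbf{K}}(G,\phi_{U,R})$ bijective for every $G\in\mathfrak{G}$ forces $\phi_{U,R}$ to be an isomorphism, i.e.\ $\mathcal{A}$ is a sheaf.

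The routine part is the mono-reflection diagram chase and the bookkeeping of naturality. The main obstacle I anticipate is the directedness claim in part (1): one must verify that the system defining $\mathcal{A}^{+}(U)$ is genuinely filtered before finite presentability of $G$ can be invoked, and this rests squarely on the closure of covering sieves under intersection. Everything else is formal manipulation of the isomorphism of Proposition \ref{Prop-Hom-KC-(B-Tensor-Set-G,A)} together with the elementary facts that representable functors preserve limits and that (strong) generators reflect monomorphisms (resp.\ isomorphisms).
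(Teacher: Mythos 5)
Your proposal is correct and follows essentially the same route as the paper: part (1) comes from $Hom_{\mathbf{K}}(G,\_)$ preserving limits (representability) and directed colimits (finite presentability of $G$), and part (2) from the fact that the functors $Hom_{\mathbf{K}}(G,\_)$, $G\in\mathfrak{G}$, jointly reflect monomorphisms and isomorphisms when $\mathfrak{G}$ is a strong generator. Your explicit check that the covering sieves over $U$ form a directed system (closure under intersection) is a point the paper's proof leaves implicit, but it is the same argument.
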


\begin{proof}
~

\begin{enumerate}
\item If $G\in Pres_{\aleph _{0}}\mathbf{K}$ (see Remark \ref%
{Rem-Locally-presentable}), then the functor $Hom_{\mathbf{K}}\left(
G,\_\right) $ commutes with directed colimits and arbitrary limits.
Therefore, $Hom_{\mathbf{K}}\left( G,\mathcal{A}\right) ^{+}%
\simeq%
Hom_{\mathbf{K}}\left( G,\mathcal{A}^{+}\right) $.

\item $Hom_{\mathbf{K}}\left( G,\_\right) $ commutes with arbitrary limits.
Therefore, for any covering sieve $R\subseteq h_{U}$,%
\begin{equation*}
Hom_{\mathbf{K}}\left( G,\underset{\left( V\rightarrow U\right) \in \mathbf{C%
}_{R}}{\underleftarrow{\lim }}\mathcal{A}\left( V\right) \right) 
\simeq%
\underset{\left( V\rightarrow U\right) \in \mathbf{C}_{R}}{\underleftarrow{%
\lim }}Hom_{\mathbf{K}}\left( G,\mathcal{A}\left( V\right) \right) .
\end{equation*}%
The morphism%
\begin{equation*}
\mathcal{A}\left( U\right) \longrightarrow \underset{\left( V\rightarrow
U\right) \in \mathbf{C}_{R}}{\underleftarrow{\lim }}\mathcal{A}\left(
V\right)
\end{equation*}%
is a monomorphism (respectively, an isomorphism) iff%
\begin{equation*}
Hom_{\mathbf{K}}\left( G,\mathcal{A}\left( U\right) \right) \longrightarrow 
\underset{\left( V\rightarrow U\right) \in \mathbf{C}_{R}}{\underleftarrow{%
\lim }}Hom_{\mathbf{K}}\left( G,\mathcal{A}\left( V\right) \right)
\end{equation*}%
is a monomorphism (respectively, an isomorphism) for any $G\in \mathfrak{G}$.
\end{enumerate}
\end{proof}

\begin{theorem}
\label{Th-Properties-of-Plus}Assume that $\mathbf{K}$ is a finitely
presentable category (Definition \ref{Def-Locally-presentable}). Let $%
\lambda \left( \mathcal{A}\right) :\mathcal{A\rightarrow A}^{+}$ be the
canonical morphism of functors%
\begin{equation*}
\mathbf{1}_{\mathbf{pS}\left( X,\mathbf{K}\right) }\longrightarrow \left(
{}\right) ^{+}:\mathbf{pS}\left( X,\mathbf{K}\right) \longrightarrow \mathbf{%
pS}\left( X,\mathbf{K}\right) .
\end{equation*}%
Then:

\begin{enumerate}
\item The functor $\left( {}\right) ^{+}$ is left exact.

\item For any $\mathcal{A}$, $\mathcal{A}^{+}$ is a separated presheaf.

\item A presheaf $\mathcal{A}$ is separated iff $\lambda \left( \mathcal{A}%
\right) $ is a monomorphism. In that case $\mathcal{A}^{+}$ is a sheaf.

\item The following conditions are equivalent:

\begin{enumerate}
\item $\lambda \left( \mathcal{A}\right) $ is an isomorphism.

\item $\mathcal{A}$ is a sheaf.
\end{enumerate}

\item The functor $\left( {}\right) _{\mathbf{K}}^{\#}=\left( {}\right) _{%
\mathbf{K}}^{++}$ is left adjoint to the inclusion%
\begin{equation*}
i_{X,\mathbf{K}}:\mathbf{S}\left( X,\mathbf{K}\right) \hookrightarrow 
\mathbf{pS}\left( X,\mathbf{K}\right) .
\end{equation*}
\end{enumerate}
\end{theorem}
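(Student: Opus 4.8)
The plan is to deduce all five assertions from the corresponding classical facts about $\mathbf{Set}$-valued sheafification by probing with the functors $Hom_{\mathbf{K}}\left( G,-\right) $, where $G$ ranges over a strong generator consisting of finitely presentable objects. Since $\mathbf{K}$ is locally finitely presentable, $\mathfrak{G}=Pres_{\aleph _{0}}\mathbf{K}$ is such a generator (a standard property of locally finitely presentable categories, cf. \cite{Adamek-Rosicky-1994-Locally-presentable-categories-MR1294136}): each $Hom_{\mathbf{K}}\left( G,-\right) $ preserves all limits, commutes with the plus functor $(-)^{+}$ by Proposition \ref{Prop-Hom(G,_)}(1), and the whole family is jointly faithful and jointly conservative. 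Faithfulness lets one reflect monomorphisms and conservativity lets one reflect isomorphisms, both computed pointwise in $\mathbf{K}$; thus a morphism of $\mathbf{K}$-valued presheaves is pointwise mono (resp. iso) if and only if its image under every $Hom_{\mathbf{K}}\left( G,-\right) $ is. I will freely use the classical $\mathbf{Set}$-coefficient statements: the plus functor is left exact, $\mathcal{B}^{+}$ is always separated, $\lambda \left( \mathcal{B}\right) $ is mono exactly when $\mathcal{B}$ is separated (and then $\mathcal{B}^{+}$ is a sheaf), $\lambda \left( \mathcal{B}\right) $ is iso exactly when $\mathcal{B}$ is a sheaf, and $(-)^{++}$ is the sheafification.

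For part (1), finite limits in $\mathbf{pS}\left( X,\mathbf{K}\right) $ are computed pointwise, hence in $\mathbf{K}$. Given a finite diagram $\mathcal{D}$, I would apply $Hom_{\mathbf{K}}\left( G,-\right) $ to the canonical comparison $\left( \underleftarrow{\lim }\mathcal{D}\right) ^{+}\rightarrow \underleftarrow{\lim }\left( \mathcal{D}^{+}\right) $. Commuting $Hom_{\mathbf{K}}\left( G,-\right) $ past the finite limit (it preserves limits) and past the plus functor (Proposition \ref{Prop-Hom(G,_)}(1)) converts it into the analogous comparison for the $\mathbf{Set}$-valued presheaf $Hom_{\mathbf{K}}\left( G,\mathcal{D}\right) $, which is an isomorphism because the $\mathbf{Set}$ plus functor is left exact (the filtered colimit over covering sieves commutes with finite limits, while limits commute with limits). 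Joint conservativity then forces the original comparison to be an isomorphism.

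Parts (2)--(4) are direct translations. For (2), $\mathcal{A}^{+}$ is separated if and only if each $Hom_{\mathbf{K}}\left( G,\mathcal{A}^{+}\right) \simeq Hom_{\mathbf{K}}\left( G,\mathcal{A}\right) ^{+}$ is separated (Proposition \ref{Prop-Hom(G,_)}(2a) combined with (1)), which always holds in $\mathbf{Set}$. For (3), chaining the same two inputs: $\mathcal{A}$ is separated if and only if every $Hom_{\mathbf{K}}\left( G,\mathcal{A}\right) $ is, if and only if every $\lambda \left( Hom_{\mathbf{K}}\left( G,\mathcal{A}\right) \right) \simeq Hom_{\mathbf{K}}\left( G,\lambda \left( \mathcal{A}\right) \right) $ is a monomorphism, if and only if $\lambda \left( \mathcal{A}\right) $ is a monomorphism (joint faithfulness); in that case each $Hom_{\mathbf{K}}\left( G,\mathcal{A}\right) ^{+}$ is a sheaf, so $\mathcal{A}^{+}$ is a sheaf by Proposition \ref{Prop-Hom(G,_)}(2b). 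Part (4) is the identical chain with ``monomorphism'' replaced by ``isomorphism'' and joint faithfulness replaced by joint conservativity, using $\lambda \left( \mathcal{B}\right) $ iso $\Leftrightarrow \mathcal{B}$ sheaf.

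For part (5), $\mathcal{A}^{\#}=\mathcal{A}^{++}$ is a sheaf by (2) and (3), so $(-)^{\#}$ lands in $\mathbf{S}\left( X,\mathbf{K}\right) $. Rather than constructing factorizations by hand, I would show that the pointed endofunctor $\left( (-)^{++},\lambda ^{\left( 2\right) }\right) $, with $\lambda ^{\left( 2\right) }\left( \mathcal{A}\right) =\lambda \left( \mathcal{A}^{+}\right) \circ \lambda \left( \mathcal{A}\right) $, is idempotent, which identifies $\mathbf{S}\left( X,\mathbf{K}\right) $ (the $\lambda ^{\left( 2\right) }$-local objects, by (4)) as a reflective subcategory with reflector $(-)^{\#}$. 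This needs two morphisms to be isomorphisms: $\lambda ^{\left( 2\right) }\left( \mathcal{A}^{++}\right) $, which is invertible because $\mathcal{A}^{++}$ is a sheaf (the extension of (4) to $\lambda ^{\left( 2\right) }$), and $\left( \lambda ^{\left( 2\right) }\left( \mathcal{A}\right) \right) ^{++}$, which I would prove invertible by reducing through $Hom_{\mathbf{K}}\left( G,-\right) $ (applying Proposition \ref{Prop-Hom(G,_)}(1) twice so that it commutes with $(-)^{++}$) to the classical fact that the unit of $\mathbf{Set}$-sheafification becomes invertible after sheafifying, then invoking joint conservativity. The main obstacle lies exactly here: the universal property of sheafification is not a hom-set statement that $Hom_{\mathbf{K}}\left( G,-\right) $ can detect, so the whole force of the argument must be rephrased as the invertibility of specific morphisms, which is the only shape in which the reduction to $\mathbf{Set}$ is legitimate.
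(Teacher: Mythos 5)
Your proposal is correct, and for parts (2)--(4) it is essentially the paper's proof: both reduce to \cite[Proposition II.3.2]{SGA4-1-MR0354652} for $\mathbf{Set}$-valued presheaves via Proposition \ref{Prop-Hom(G,_)}, using that $Pres_{\aleph _{0}}\mathbf{K}$ is a strong generator whose associated family $Hom_{\mathbf{K}}\left( G,\_\right) $ jointly reflects monomorphisms and isomorphisms. You diverge in two places. For part (1) the paper argues directly, with no probing: $\left( {}\right) ^{+}$ is the composite of the limit over $\mathbf{C}_{R}$ (which preserves all limits) with a directed colimit over covering sieves (which commutes with finite limits), so left exactness is immediate; your reduction through $Hom_{\mathbf{K}}\left( G,\_\right) $ and joint conservativity also works but is longer. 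For part (5) the paper verifies the adjunction by hand: existence of the factorization $\mathcal{A}\rightarrow \mathcal{A}^{++}\rightarrow \mathcal{B}$ is immediate from functoriality of $\left( {}\right) ^{++}$ together with $\mathcal{B}\simeq \mathcal{B}^{++}$ from part (4), and uniqueness is an equality of two morphisms out of $\mathcal{A}^{++}$, hence detectable by joint faithfulness of the $Hom_{\mathbf{K}}\left( G,\_\right) $. So, contrary to your closing remark, the universal property does decompose into probe-able pieces once the existence half is constructed (which is the easy half). Your alternative route through the idempotent pointed endofunctor $\left( \left( {}\right) ^{++},\lambda ^{\left( 2\right) }\right) $ is valid and buys a cleaner conceptual picture (sheaves are exactly the $\lambda ^{\left( 2\right) }$-local objects), at the cost of the extra verification that $\left( \lambda ^{\left( 2\right) }\left( \mathcal{A}\right) \right) ^{++}$ is invertible, a morphism the paper's argument never needs to consider.
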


\begin{proof}
Let $Pres_{\aleph _{0}}\mathbf{K}$ be a set of representatives for the
isomorphism classes of finitely presentable objects of $\mathbf{K}$ (see
Remark \ref{Rem-Locally-presentable}). This set forms a \textbf{strong
generator} \cite[Theorem 1.20]%
{Adamek-Rosicky-1994-Locally-presentable-categories-MR1294136}, for $\mathbf{%
K}$.

\begin{enumerate}
\item The functor $\mathcal{A}\mapsto \mathcal{A}^{+}$ is the composition of
a limit $\underset{\left( V\rightarrow U\right) \in \mathbf{C}_{R}}{%
\underleftarrow{\lim }}\mathcal{A}\left( V\right) $ which commutes with
arbitrary limits, and a directed colimit $\underset{R\subseteq h_{U}}{%
\underrightarrow{\lim }}$ which commutes with \textbf{finite} limits \cite[%
Proposition 1.59]%
{Adamek-Rosicky-1994-Locally-presentable-categories-MR1294136}. Therefore, $%
\left( {}\right) ^{+}$ is left exact (commutes with finite limits).

\item Due to \cite[Proposition II.3.2]{SGA4-1-MR0354652}, $Hom_{\mathbf{K}%
}\left( G,\mathcal{A}^{+}\right) $ is separated for any $G\in Pres_{\aleph
_{0}}\mathbf{K}$. Apply Proposition \ref{Prop-Hom(G,_)}.

\item Due to \cite[Proposition II.3.2]{SGA4-1-MR0354652}, 
\begin{equation*}
Hom_{\mathbf{K}}\left( G,\mathcal{A}\right) \longrightarrow Hom_{\mathbf{K}%
}\left( G,\mathcal{A}^{+}\right)
\end{equation*}%
is a monomorphism iff $Hom_{\mathbf{K}}\left( G,\mathcal{A}\right) $ is
separated for any $G\in Pres_{\aleph _{0}}\mathbf{K}$. In that case $Hom_{%
\mathbf{K}}\left( G,\mathcal{A}^{+}\right) $ is a sheaf. Apply Proposition %
\ref{Prop-Hom(G,_)}.

\item Due to \cite[Proposition II.3.2]{SGA4-1-MR0354652}, 
\begin{equation*}
Hom_{\mathbf{K}}\left( G,\mathcal{A}\right) \longrightarrow Hom_{\mathbf{K}%
}\left( G,\mathcal{A}^{+}\right)
\end{equation*}%
is an isomorphism iff $Hom_{\mathbf{K}}\left( G,\mathcal{A}\right) $ is a
sheaf for any $G\in Pres_{\aleph _{0}}\mathbf{K}$. Apply Proposition \ref%
{Prop-Hom(G,_)}.

\item We need to prove that for any sheaf $\mathcal{B}$, any morphism $%
\mathcal{A\rightarrow B}$ has a unique decomposition%
\begin{equation*}
\mathcal{A\longrightarrow A}^{++}\longrightarrow \mathcal{B}.
\end{equation*}%
The existence is easy: since $\mathcal{B\longrightarrow B}^{++}$ is an
isomorphism, take the decomposition%
\begin{equation*}
\mathcal{A\longrightarrow A}^{++}\longrightarrow \mathcal{B}^{++}%
\simeq%
\mathcal{B}.
\end{equation*}%
To prove uniqueness, consider two decompositions%
\begin{equation*}
\begin{diagram} \mathcal{A} & \rTo & \mathcal{A}^{++} & \pile{\rTo^{\alpha}
\\ \rTo_{\beta}} & \mathcal{B} \\ \end{diagram}
\end{equation*}%
and apply $Hom_{\mathbf{K}}\left( G,\_\right) $:%
\begin{equation*}
\begin{diagram} Hom_{\QTR{bf}{K}}\left( G,\mathcal{A}\right) & \rTo &
Hom_{\QTR{bf}{K}}\left( G,\mathcal{A}\right) ^{++} &
\pile{\rTo^{Hom_{\QTR{bf}{K}}\left( G,\alpha \right) } \\
\rTo_{Hom_{\QTR{bf}{K}}\left( G,\beta \right)}} & Hom_{\QTR{bf}{K}}\left(
G,\mathcal{B}\right). \\ \end{diagram}
\end{equation*}%
It follows that $Hom_{\mathbf{K}}\left( G,\alpha \right) =Hom_{\mathbf{K}%
}\left( G,\beta \right) $ for any $G\in Pres_{\aleph _{0}}\mathbf{K}$,
therefore $\alpha =\beta $.
\end{enumerate}
\end{proof}

\begin{theorem}
\label{Th-Sheaves-K-locally-presentable}Let $X$ be a small site, and $%
\mathbf{K}$ be a locally $\lambda $-presentable category. Then%
\begin{equation*}
\mathbf{S}\left( X,\mathbf{K}\right) \subseteq \mathbf{pS}\left( X,\mathbf{K}%
\right)
\end{equation*}%
is a reflective subcategory.
\end{theorem}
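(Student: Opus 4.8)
The plan is to exhibit $\mathbf{S}\left( X,\mathbf{K}\right) $ as an \emph{orthogonality class} cut out by a \emph{set} of morphisms inside the presheaf category $\mathbf{pS}\left( X,\mathbf{K}\right) =\mathbf{K}^{\left( \mathbf{C}_{X}\right) ^{op}}$, and then to invoke the reflection theorem for such classes in locally presentable categories \cite[Theorem 1.39]{Adamek-Rosicky-1994-Locally-presentable-categories-MR1294136}. This route avoids iterating the $\left( {}\right) ^{+}$ construction, which was shown to behave well only in the finitely presentable case (Theorem \ref{Th-Properties-of-Plus}): for a general regular cardinal $\lambda $, the colimit over covering sieves need not be $\lambda $-filtered, so $Hom_{\mathbf{K}}\left( G,\_\right) $ with $G$ merely $\lambda $-presentable need not commute with it, and a two-step plus construction is no longer guaranteed to yield a sheaf. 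That failure is the chief reason a softer, more structural argument is preferable here.

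First I would record that $\mathbf{pS}\left( X,\mathbf{K}\right) $ is itself locally presentable. Since the site is small, $\left( \mathbf{C}_{X}\right) ^{op}$ is a small category, and $\mathbf{K}$ is locally $\lambda $-presentable by hypothesis; hence the functor category $\mathbf{K}^{\left( \mathbf{C}_{X}\right) ^{op}}$ is again locally presentable \cite[Corollary 1.54]{Adamek-Rosicky-1994-Locally-presentable-categories-MR1294136}. In particular it is complete and cocomplete, and all limits — so in particular the limits $\underleftarrow{\lim }_{\left( V\rightarrow U\right) \in \mathbf{C}_{R}}\mathcal{A}\left( V\right) $ occurring in the sheaf condition — are computed objectwise.

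Next I would translate the sheaf condition into orthogonality. Fix the strong generator $\mathfrak{G}=Pres_{\lambda }\mathbf{K}$ (Remark \ref{Rem-Locally-presentable}, \cite[Theorem 1.20]{Adamek-Rosicky-1994-Locally-presentable-categories-MR1294136}) and set
\begin{equation*}
\Sigma =\left\{ \,G\otimes _{\mathbf{Set}}R\longrightarrow G\otimes _{\mathbf{Set}}h_{U}\ :\ U\in \mathbf{C}_{X},\ R\in Cov\left( U\right) ,\ G\in \mathfrak{G}\,\right\} ,
\end{equation*}
the maps induced by the sieve inclusions $R\hookrightarrow h_{U}$. By Proposition \ref{Prop-Variant-of-Yoneda} together with the computation $Hom_{\mathbf{K}^{\mathbf{C}^{op}}}\left( G\otimes _{\mathbf{Set}}R,\mathcal{A}\right) \simeq Hom_{\mathbf{K}}\left( G,\underleftarrow{\lim }_{\left( V\rightarrow U\right) \in \mathbf{C}_{R}}\mathcal{A}\left( V\right) \right) $ established earlier, a presheaf $\mathcal{A}$ is orthogonal to the member of $\Sigma $ indexed by $\left( U,R,G\right) $ iff
\begin{equation*}
Hom_{\mathbf{K}}\left( G,\mathcal{A}\left( U\right) \right) \longrightarrow Hom_{\mathbf{K}}\!\left( G,\underleftarrow{\lim }_{\left( V\rightarrow U\right) \in \mathbf{C}_{R}}\mathcal{A}\left( V\right) \right)
\end{equation*}
is a bijection. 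Letting $G$ range over $\mathfrak{G}$, which jointly reflects isomorphisms, this says exactly that $\mathcal{A}\left( U\right) \rightarrow \underleftarrow{\lim }_{\left( V\rightarrow U\right) }\mathcal{A}\left( V\right) $ is an isomorphism for every covering sieve $R$ over every $U$, i.e. that $\mathcal{A}$ is a sheaf (Definition \ref{Def-(pre)sheaves}, via Proposition \ref{Prop-Hom(G,_)}). Hence $\mathbf{S}\left( X,\mathbf{K}\right) =\Sigma ^{\perp }$.

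Finally I would verify that $\Sigma $ is genuinely a set and conclude. Because $X$ is small there is only a set of objects $U$ and, for each, a set $Cov\left( U\right) $ of covering sieves, while $\mathfrak{G}=Pres_{\lambda }\mathbf{K}$ is a set by local $\lambda $-presentability (Remark \ref{Rem-Locally-presentable}); thus $\Sigma $ is a set of morphisms in the locally presentable category $\mathbf{pS}\left( X,\mathbf{K}\right) $, and its orthogonality class $\Sigma ^{\perp }$ is reflective \cite[Theorem 1.39]{Adamek-Rosicky-1994-Locally-presentable-categories-MR1294136}. Since $\Sigma ^{\perp }=\mathbf{S}\left( X,\mathbf{K}\right) $, the inclusion $\mathbf{S}\left( X,\mathbf{K}\right) \hookrightarrow \mathbf{pS}\left( X,\mathbf{K}\right) $ is a right adjoint, as required. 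The one step deserving genuine care is the equivalence between orthogonality and the sheaf condition, which hinges on the strong-generator detection of isomorphisms furnished by Proposition \ref{Prop-Hom(G,_)}; the rest is a direct appeal to the structural theory of locally presentable categories.
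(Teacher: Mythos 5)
Your proof is correct and is essentially the paper's own argument: both realize $\mathbf{S}\left( X,\mathbf{K}\right) $ as the orthogonality class of the set of morphisms $G\otimes _{\mathbf{Set}}R\rightarrow G\otimes _{\mathbf{Set}}h_{U}$ (with $G$ ranging over $Pres_{\lambda }\mathbf{K}$, $R$ over covering sieves) inside the locally presentable category $\mathbf{pS}\left( X,\mathbf{K}\right) $, identify orthogonality with the sheaf condition via the strong generator, and invoke \cite[Theorem 1.39]{Adamek-Rosicky-1994-Locally-presentable-categories-MR1294136}. The only cosmetic difference is that the paper additionally chooses a regular cardinal $\mu \geq \lambda $ making the domains and codomains of these morphisms $\mu $-presentable, so as to match the literal hypothesis of the cited theorem on $\mu $-orthogonality classes; your direct appeal to the reflectivity of small-orthogonality classes covers the same ground.
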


\begin{proof}
Due to \cite[Corollary 1.54]%
{Adamek-Rosicky-1994-Locally-presentable-categories-MR1294136}, $\mathbf{pS}%
\left( X,\mathbf{K}\right) =\mathbf{K}^{\mathbf{C}_{X}}$ is a locally $%
\lambda $-presentable category. For each covering sieve $R\subseteq h_{U}$
and each $G\in \mathbf{K}$, let%
\begin{equation*}
g_{R,G}:G\otimes _{\mathbf{Set}}R\longrightarrow G\otimes _{\mathbf{Set}%
}h_{U}
\end{equation*}%
be the corresponding morphism in $\mathbf{pS}\left( X,\mathbf{K}\right) $.
For a presheaf $\mathcal{A}$, apply $Hom_{\mathbf{pS}\left( X,\mathbf{K}%
\right) }\left( \_,\mathcal{A}\right) $:%
\begin{eqnarray*}
&&Hom_{\mathbf{pS}\left( X,\mathbf{K}\right) }\left( G\otimes _{\mathbf{Set}%
}R,\mathcal{A}\right) 
\simeq%
Hom_{\mathbf{pS}\left( X,\mathbf{Set}\right) }\left( R,Hom_{\mathbf{Set}%
}\left( G,\mathcal{A}\right) \right) , \\
&&Hom_{\mathbf{pS}\left( X,\mathbf{K}\right) }\left( G\otimes _{\mathbf{Set}%
}h_{U},\mathcal{A}\right) 
\simeq%
Hom_{\mathbf{K}}\left( G,\mathcal{A}\left( U\right) \right) =Hom_{\mathbf{K}%
}\left( G,\mathcal{A}\right) \left( U\right) , \\
&&Hom_{\mathbf{pS}\left( X,\mathbf{K}\right) }\left( g_{R,G},\mathcal{A}%
\right) 
\simeq%
\left( Hom_{\mathbf{K}}\left( G,\mathcal{A}\right) \left( U\right)
\longrightarrow Hom_{\mathbf{pS}\left( X,\mathbf{Set}\right) }\left( R,Hom_{%
\mathbf{K}}\left( G,\mathcal{A}\right) \right) \right) .
\end{eqnarray*}%
Assume that $G$ runs over $Pres_{\lambda }\mathbf{K}$. Then the following
conditions are equivalent:

\begin{enumerate}
\item $Hom_{\mathbf{pS}\left( X,\mathbf{K}\right) }\left( g_{R,G},\mathcal{A}%
\right) $ is a bijection for all $g_{R,G}$.

\item $Hom_{\mathbf{K}}\left( G,\mathcal{A}\right) $ is a sheaf of sets for
all $G\in Pres_{\lambda }\left( \mathbf{K}\right) $.

\item $\mathcal{A}$ is a sheaf.
\end{enumerate}

Choose a regular cardinal $\mu \geq \lambda $ such that for all $G$ both $%
G\otimes _{\mathbf{Set}}R$ and $G\otimes _{\mathbf{Set}}h_{U}$ are $\mu $%
-presentable. It follows that $\mathbf{S}\left( X,\mathbf{K}\right)
\subseteq \mathbf{pS}\left( X,\mathbf{K}\right) $ is the $\mu $%
-orthogonality class $\left\{ g_{R,G}\right\} ^{\perp }$ \cite[Definition
1.35]{Adamek-Rosicky-1994-Locally-presentable-categories-MR1294136}, in $%
\mathbf{pS}\left( X,\mathbf{K}\right) $, and therefore \cite[Theorem 1.39]%
{Adamek-Rosicky-1994-Locally-presentable-categories-MR1294136}, $\mathbf{S}%
\left( X,\mathbf{K}\right) $ is a reflective subcategory of $\mathbf{pS}%
\left( X,\mathbf{K}\right) $.
\end{proof}

\subsection{(Pre)sheaves on topological spaces}

Throughout this Subsection, $X$ is a topological space considered as the
site $OPEN\left( X\right) $ (see Example \ref{Site-TOP} and Remark \ref%
{Denote-standard-site-simply}).

\begin{definition}
\label{Def-Stalk}Assume that a category $\mathbf{K}$ admits filtered
colimits. Let $\mathcal{A}$ be a presheaf with values in $\mathbf{K}$, and
let $x\in X$. The \textbf{stalk} of $\mathcal{A}$ at $x$ is 
\begin{equation*}
\mathcal{A}_{x}%
{:=}%
\underrightarrow{\lim }_{U\in J\left( x\right) }\mathcal{A}\left( U\right)
\end{equation*}%
where $J\left( x\right) $ is the family of open neighborhoods of $x$.
\end{definition}

\begin{remark}
\label{Rem-Stalk-different-categories}In a situation when $K\subseteq L$ is
a subcategory, and $\mathcal{A}\in \mathbf{pS}\left( X,\mathbf{K}\right) $,
we will use notations $\left( \mathcal{A}\right) _{x}^{\mathbf{K}}$ and $%
\left( \mathcal{A}\right) _{x}^{\mathbf{L}}$ depending on whether the
colimit is taken in the category $\mathbf{K}$ or in the category $\mathbf{L}$%
.
\end{remark}

\begin{definition}
\label{Def-Local-isomorphism-presheaves}Let $\mathbf{K}$ admit filtered
colimits, and let $f:\mathcal{A\rightarrow B}$ be a morphism in the category
of presheaves $\mathbf{pS}\left( X,\mathbf{K}\right) $. We say that $f$ is a 
\textbf{local isomorphism} iff $f_{x}:\mathcal{A}_{x}\rightarrow \mathcal{B}%
_{x}$ is an isomorphism for any $x\in X$. In a situation when $\mathbf{K}%
\subseteq \mathbf{L}$, and%
\begin{equation*}
\mathcal{A},\mathcal{B}\in \mathbf{pS}\left( X,\mathbf{K}\right) \subseteq 
\mathbf{pS}\left( X,\mathbf{L}\right) ,
\end{equation*}%
we will say that $f$ is $\mathbf{K}$-local (respectively $\mathbf{L}$-local)
isomorphism iff $\left( f\right) _{x}^{\mathbf{K}}:\left( \mathcal{A}\right)
_{x}^{\mathbf{K}}\longrightarrow \left( \mathcal{B}\right) _{x}^{\mathbf{K}}$
(respectively $\left( f\right) _{x}^{\mathbf{L}}:\left( \mathcal{A}\right)
_{x}^{\mathbf{L}}\longrightarrow \left( \mathcal{B}\right) _{x}^{\mathbf{L}}$%
) is an isomorphism for any $x\in X$.
\end{definition}

\begin{proposition}
\label{Prop-Sheafification-is-local-iso}Let $\mathbf{K}$ be a complete
category admitting filtered colimits. Assume that $\mathbf{S}\left( X,%
\mathbf{K}\right) \subseteq \mathbf{pS}\left( X,\mathbf{K}\right) $ is
reflective, and the reflection is given by the functor%
\begin{equation*}
\left( {}\right) ^{\#}:\mathbf{pS}\left( X,\mathbf{K}\right) \longrightarrow 
\mathbf{S}\left( X,\mathbf{K}\right) .
\end{equation*}%
Then for any presheaf $\mathcal{A}$, the natural morphism $\mathcal{A}%
\rightarrow \mathcal{A}^{\#}$ is a local isomorphism.
\end{proposition}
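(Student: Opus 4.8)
The plan is to characterise the local isomorphism condition representably and then invoke Yoneda, using \emph{skyscraper} sheaves as the test objects. Recall from Definition \ref{Def-Local-isomorphism-presheaves} that $\mathcal{A}\rightarrow \mathcal{A}^{\#}$ is a local isomorphism precisely when the induced map on stalks $\mathcal{A}_{x}\rightarrow (\mathcal{A}^{\#})_{x}$ is an isomorphism in $\mathbf{K}$ for every $x\in X$, where the stalk is the filtered colimit of Definition \ref{Def-Stalk}. Since $\mathbf{K}$ is complete it has a terminal object $T$, so for each $x\in X$ and each $V\in \mathbf{K}$ I can form the skyscraper presheaf $x_{\ast }V$ given by $(x_{\ast }V)(U)=V$ when $x\in U$ and $(x_{\ast }V)(U)=T$ otherwise, with restrictions equal to the identity of $V$ (respectively the unique map into $T$).

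First I would check that $x_{\ast }V$ lies in $\mathbf{S}(X,\mathbf{K})$, using the limit description of the sheaf condition from Definition \ref{Def-(pre)sheaves}. For a covering sieve $R$ over an open $U$ with $x\notin U$, every $W$ in the sieve also omits $x$, so both $(x_{\ast }V)(U)$ and $\underleftarrow{\lim }_{(W\rightarrow U)\in \mathbf{C}_{R}}(x_{\ast }V)(W)$ equal $T$. When $x\in U$, the members $W$ of the sieve that contain $x$ are cofinal and the transition maps among them are identities, so the limit is again $V$ and the canonical comparison map is the identity. Hence $x_{\ast }V$ is a sheaf. This verification, though routine, is the one genuinely topology-dependent step.

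Next I would establish that the stalk functor is left adjoint to the skyscraper functor, i.e.\ that there is an isomorphism $Hom_{\mathbf{pS}(X,\mathbf{K})}(\mathcal{A},x_{\ast }V)\simeq Hom_{\mathbf{K}}(\mathcal{A}_{x},V)$ natural in $\mathcal{A}$ and $V$. Indeed, a morphism $\mathcal{A}\rightarrow x_{\ast }V$ is exactly a family of maps $\mathcal{A}(U)\rightarrow V$ indexed by the open neighbourhoods $U\in J(x)$ (the components over open sets omitting $x$ being forced to land in $T$), compatible with restriction; this is precisely a cocone on the diagram defining $\mathcal{A}_{x}=\underrightarrow{\lim }_{U\in J(x)}\mathcal{A}(U)$, hence a single morphism out of the stalk.

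Finally I would combine the two adjunctions. Because $x_{\ast }V$ is a sheaf, the reflection $(\,)^{\#}$ supplies a bijection $Hom_{\mathbf{pS}(X,\mathbf{K})}(\mathcal{A},x_{\ast }V)\simeq Hom_{\mathbf{pS}(X,\mathbf{K})}(\mathcal{A}^{\#},x_{\ast }V)$ given by precomposition with the unit $\mathcal{A}\rightarrow \mathcal{A}^{\#}$; applying the stalk adjunction on each side yields a bijection $Hom_{\mathbf{K}}((\mathcal{A}^{\#})_{x},V)\simeq Hom_{\mathbf{K}}(\mathcal{A}_{x},V)$ natural in $V$. The naturality of all these identifications shows that this composite bijection is precisely precomposition with the stalk map $\mathcal{A}_{x}\rightarrow (\mathcal{A}^{\#})_{x}$, so Yoneda's lemma in $\mathbf{K}$ forces that stalk map to be an isomorphism for every $x$, which is the claim. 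The main obstacle is the bookkeeping in this last step: one must confirm that the composite of the three canonical bijections is genuinely induced by the stalk of the unit, so that Yoneda is legitimately applicable; once that is pinned down the argument is entirely formal.
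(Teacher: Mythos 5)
Your proof is correct and follows essentially the same route as the paper: the skyscraper presheaf $x_{\ast}V$ is exactly the paper's pointed sheaf $\mathcal{P}_{x,G}$, and the argument via the stalk--skyscraper adjunction composed with the reflection adjunction, followed by Yoneda, is the paper's argument verbatim. Your extra care in checking that the composite bijection is induced by the stalk of the unit map is a worthwhile refinement of the final step, which the paper leaves implicit.
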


\begin{proof}
Let $x\in X$, and $G\in \mathbf{K}$. Denote by $\mathcal{P}_{x,G}$ the
following \textbf{pointed} presheaf: $\mathcal{P}_{x,G}\left( U\right) $ is
a terminal object $T$ when $x\not\in U$, and $\mathcal{P}_{x,G}\left(
U\right) =G$ when $x\in U$. It is easy to check that $\mathcal{P}_{x,G}$ is
in fact a \textbf{sheaf}, and that for any presheaf $\mathcal{C}$,%
\begin{equation*}
Hom_{\mathbf{pS}\left( X,\mathbf{K}\right) }\left( \mathcal{C},\mathcal{P}%
_{x,G}\right) 
\simeq%
\underleftarrow{\lim }_{U\in J\left( x\right) }Hom_{\mathbf{K}}\left( 
\mathcal{C}\left( U\right) ,G\right) 
\simeq%
Hom_{\mathbf{K}}\left( \mathcal{C}_{x},G\right) ,
\end{equation*}%
naturally on $G$ and $\mathcal{C}$. Using the adjointness isomorphism, one
gets%
\begin{equation*}
Hom_{\mathbf{K}}\left( \mathcal{A}_{x},G\right) 
\simeq%
Hom_{\mathbf{pS}\left( X,\mathbf{K}\right) }\left( \mathcal{A},\mathcal{P}%
_{x,G}\right) 
\simeq%
Hom_{\mathbf{S}\left( X,\mathbf{K}\right) }\left( \mathcal{A}^{\#},\mathcal{P%
}_{x,G}\right) 
\simeq%
Hom_{\mathbf{K}}\left( \left( \mathcal{A}^{\#}\right) _{x},G\right) ,
\end{equation*}%
for any $G\in \mathbf{K}$. Therefore, $\mathcal{A}_{x}%
\simeq%
\left( \mathcal{A}^{\#}\right) _{x}$, as desired.
\end{proof}

\section{(Pre)cosheaves}

\subsection{General sites}

We fix a small site $X=\left( \mathbf{C}_{X},Cov\left( X\right) \right) $,
and a category $\mathbf{K}$.

\begin{definition}
\label{Def-(pre)cosheaves}Assume that $\mathbf{K}$ is cocomplete.

\begin{enumerate}
\item A \textbf{precosheaf} $\mathcal{A}$ on $X$ with values in $\mathbf{K}$
is a functor $\mathcal{A}:\mathbf{C}_{X}\rightarrow \mathbf{K}$.

\item A precosheaf $\mathcal{A}$ is \textbf{coseparated} provided%
\begin{equation*}
\mathcal{A}\otimes _{\mathbf{Set}^{\mathbf{C}_{X}}}R\longrightarrow \mathcal{%
A}\otimes _{\mathbf{Set}^{\mathbf{C}_{X}}}h_{U}%
\simeq%
\mathcal{A}\left( U\right)
\end{equation*}%
is an epimorphism for any $U\in \mathbf{C}_{X}$ and for any covering sieve $%
R $ over $U$.

\item A precosheaf $\mathcal{A}$ is a \textbf{cosheaf} provided%
\begin{equation*}
\mathcal{A}\otimes _{\mathbf{Set}^{\mathbf{C}_{X}}}R\longrightarrow \mathcal{%
A}\otimes _{\mathbf{Set}^{\mathbf{C}_{X}}}h_{U}%
\simeq%
\mathcal{A}\left( U\right)
\end{equation*}%
is an isomorphism for any $U\in \mathbf{C}_{X}$ and for any covering sieve $%
R $ over $U$.
\end{enumerate}
\end{definition}

\begin{notation}
\label{Not-(Pre)cosheaves}Let $\mathbf{pCS}\left( X,\mathbf{K}\right) =%
\mathbf{K}^{\mathbf{C}_{X}}$ be the category of precosheaves on $X$ with
values in $\mathbf{K}$, and let $\mathbf{CS}\left( X,\mathbf{K}\right) $ be
the full subcategory of cosheaves.
\end{notation}

\begin{proposition}
Let $G\in \mathbf{K}$, let $\mathcal{A}\in \mathbf{pCS}\left( X,\mathbf{K}%
\right) $, and let $R\subseteq h_{U}$ be a sieve. Then:

\begin{enumerate}
\item \label{Prop-Hom-K-A-ten-Set-CX-G}%
\begin{eqnarray*}
&&Hom_{\mathbf{K}}\left( \mathcal{A}\otimes _{\mathbf{Set}^{\mathbf{C}%
_{X}}}R,G\right) 
\simeq%
Hom_{\mathbf{Set}^{\left( \mathbf{C}_{X}\right) ^{op}}}\left( R,Hom_{\mathbf{%
K}}\left( \mathcal{A},G\right) \right) 
\simeq
\\
&&%
\simeq%
\underset{\left( V\rightarrow U\right) \in \mathbf{C}_{R}}{\underleftarrow{%
\lim }}Hom_{\mathbf{K}}\left( \mathcal{A}\left( V\right) ,G\right) 
\simeq%
Hom_{\mathbf{K}}\left( \underset{\left( V\rightarrow U\right) \in \mathbf{C}%
_{R}}{\underrightarrow{\lim }}\mathcal{A}\left( V\right) ,G\right)
\end{eqnarray*}%
naturally on $G$, $\mathcal{A}$ and $R$.

\item \label{Prop-A-ten-Set-CX-R}%
\begin{equation*}
\mathcal{A}\otimes _{\mathbf{Set}^{\mathbf{C}_{X}}}R%
\simeq%
\underset{\left( V\rightarrow U\right) \in \mathbf{C}_{R}}{\underrightarrow{%
\lim }}\mathcal{A}\left( V\right) .
\end{equation*}
\end{enumerate}
\end{proposition}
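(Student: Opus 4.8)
The plan is to establish Part (1) as a chain of four natural isomorphisms, each a formal consequence of the universal properties already recorded, and then to deduce Part (2) from Part (1) by the Yoneda lemma, exactly as in the proof of the analogous statement for presheaves and sieves above. In fact the whole proposition is the formal dual (pass to $\mathbf{K}^{op}$) of that earlier result, but I would argue directly so as to keep the variances transparent.

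For the first isomorphism I would apply the contravariant functor $Hom_{\mathbf{K}}\left( \_,G\right) $ to the coequalizer
\[
\dcoprod_{U'\rightarrow V'}\mathcal{A}\left( U'\right) \otimes _{\mathbf{Set}}R\left( V'\right) \rightrightarrows \dcoprod_{U'}\mathcal{A}\left( U'\right) \otimes _{\mathbf{Set}}R\left( U'\right) \longrightarrow \mathcal{A}\otimes _{\mathbf{Set}^{\mathbf{C}_{X}}}R
\]
defining the coend. Since $Hom_{\mathbf{K}}\left( \_,G\right) $ carries colimits to limits, it turns this coequalizer into an equalizer (kernel) of products. Using the defining adjunction of the copower $\mathcal{A}\left( U'\right) \otimes _{\mathbf{Set}}Z=\dcoprod_{Z}\mathcal{A}\left( U'\right) $, namely $Hom_{\mathbf{K}}\left( \dcoprod_{Z}\mathcal{A}\left( U'\right) ,G\right) \simeq Hom_{\mathbf{Set}}\left( Z,Hom_{\mathbf{K}}\left( \mathcal{A}\left( U'\right) ,G\right) \right) $ applied pointwise with $Z=R\left( U'\right) $ and $Z=R\left( V'\right) $, this equalizer is precisely the end presentation of $Hom_{\mathbf{Set}^{\left( \mathbf{C}_{X}\right) ^{op}}}\left( R,Hom_{\mathbf{K}}\left( \mathcal{A},G\right) \right) $, where $Hom_{\mathbf{K}}\left( \mathcal{A},G\right) :\left( \mathbf{C}_{X}\right) ^{op}\rightarrow \mathbf{Set}$ is the presheaf $V\mapsto Hom_{\mathbf{K}}\left( \mathcal{A}\left( V\right) ,G\right) $.

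For the second isomorphism I would invoke \cite[Corollary I.3.5]{SGA4-1-MR0354652} (the same input used for the sieve computation above), applied to the presheaf of sets $\mathcal{F}=Hom_{\mathbf{K}}\left( \mathcal{A},G\right) $: for any sieve $R\subseteq h_{U}$ one has $Hom_{\mathbf{Set}^{\left( \mathbf{C}_{X}\right) ^{op}}}\left( R,\mathcal{F}\right) \simeq \underleftarrow{\lim }_{\left( V\rightarrow U\right) \in \mathbf{C}_{R}}\mathcal{F}\left( V\right) $, and here $\mathcal{F}\left( V\right) =Hom_{\mathbf{K}}\left( \mathcal{A}\left( V\right) ,G\right) $. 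The third isomorphism is then the universal property of the colimit: since $\mathbf{K}$ is cocomplete and $\mathbf{C}_{R}$ is small, $\underrightarrow{\lim }_{\mathbf{C}_{R}}\mathcal{A}\left( V\right) $ exists and $Hom_{\mathbf{K}}\left( \_,G\right) $ converts it into $\underleftarrow{\lim }_{\mathbf{C}_{R}}Hom_{\mathbf{K}}\left( \mathcal{A}\left( V\right) ,G\right) $. Naturality in $G$, $\mathcal{A}$ and $R$ is immediate at each step, since every isomorphism used is natural in its arguments.

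Finally, for Part (2), set $K=\mathcal{A}\otimes _{\mathbf{Set}^{\mathbf{C}_{X}}}R$ and $L=\underrightarrow{\lim }_{\left( V\rightarrow U\right) \in \mathbf{C}_{R}}\mathcal{A}\left( V\right) $. Composing the isomorphisms of Part (1) gives $Hom_{\mathbf{K}}\left( K,G\right) \simeq Hom_{\mathbf{K}}\left( L,G\right) $ naturally in $G$, that is $h^{K}\simeq h^{L}$ in $\mathbf{Set}^{\mathbf{K}}$. Since the second Yoneda embedding $h^{?}:\mathbf{K}^{op}\rightarrow \mathbf{Set}^{\mathbf{K}}$ is a full embedding, this forces $K\simeq L$ in $\mathbf{K}$, which is the claim. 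I expect the only delicate point to be bookkeeping: tracking the variances so that the equalizer produced by dualizing the coend matches exactly the end defining $Hom_{\mathbf{Set}^{\left( \mathbf{C}_{X}\right) ^{op}}}\left( R,\_\right) $, and remembering to use the \emph{covariant} Yoneda lemma (mapping out of $K$ and $L$) rather than the contravariant one in the final step.
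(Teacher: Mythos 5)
Your proof is correct and follows essentially the same route as the paper's: the paper derives the first two isomorphisms of part (1) by citing Proposition \ref{Prop-Hom-KC-(B-Tensor-Set-G,A)} together with \cite[Corollary I.3.5]{SGA4-1-MR0354652}, which is exactly the coend-to-end dualization plus copower adjunction that you carry out explicitly, and part (2) is obtained in both cases by letting $G$ range over $\mathbf{K}$ and applying the covariant Yoneda lemma. Your version merely unfolds the first citation into a direct verification, which is a legitimate (and arguably clearer) presentation of the same argument.
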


\begin{proof}
~

\begin{enumerate}
\item Follows from Proposition \ref{Prop-Hom-KC-(B-Tensor-Set-G,A)} and \cite%
[Corollary I.3.5]{SGA4-1-MR0354652}.

\item Let $G$ in (\ref{Prop-Hom-K-A-ten-Set-CX-G}) runs over all objects of $%
\mathbf{K}$. It follows from (\ref{Prop-Hom-K-A-ten-Set-CX-G}) that $h^{K}%
\simeq%
h^{L}$ where 
\begin{equation*}
K=\mathcal{A}\otimes _{\mathbf{Set}^{\mathbf{C}_{X}}}R,~L=\underset{\left(
V\rightarrow U\right) \in \mathbf{C}_{R}}{\underrightarrow{\lim }}\mathcal{A}%
\left( V\right) .
\end{equation*}%
Due to Yoneda's lemma, $K%
\simeq%
L$.
\end{enumerate}
\end{proof}

\begin{proposition}
\label{Prop-Pretopology-Cosheaves}Let $X$ be a small site with a pretopology
(Definition \ref{Def-Pretopology}). Then for any sieve $R$ generated by a
cover $\left\{ U_{i}\rightarrow U\right\} _{i\in I}$,%
\begin{equation*}
\mathcal{A}\otimes _{\mathbf{Set}^{\mathbf{C}_{X}}}R%
\simeq%
\underset{\left( V\rightarrow U\right) \in \mathbf{C}_{R}}{\underrightarrow{%
\lim }}\mathcal{A}\left( V\right) 
\simeq%
coker\left( \dcoprod\limits_{i,j\in I}\mathcal{A}\left( U_{i}\times
_{U}U_{j}\right) \rightrightarrows \dcoprod\limits_{i\in I}\mathcal{A}\left(
U_{i}\right) \right) .
\end{equation*}
\end{proposition}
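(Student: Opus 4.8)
The plan is to reduce the claim to its already-proved sheaf-theoretic counterpart by dualizing, rather than recomputing coends from scratch. The first isomorphism,
\[
\mathcal{A}\otimes _{\mathbf{Set}^{\mathbf{C}_{X}}}R


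\simeq

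\underset{\left( V\rightarrow U\right) \in \mathbf{C}_{R}}{\underrightarrow{\lim }}\mathcal{A}\left( V\right),
\]
is exactly part (\ref{Prop-A-ten-Set-CX-R}) of the preceding Proposition, so there is nothing new to do there; it holds for an arbitrary sieve $R$ and needs no pretopology hypothesis. The real content is therefore the second isomorphism, identifying this colimit with the cokernel of the pair of maps indexed by the cover $\{U_i\to U\}_{i\in I}$.

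First I would test the identity against an arbitrary object $G\in\mathbf{K}$ by applying the contravariant functor $Hom_{\mathbf{K}}\left(\_,G\right)$, which turns the cocompleteness side into a completeness side. Concretely, using part (\ref{Prop-Hom-K-A-ten-Set-CX-G}) of the previous Proposition one has
\[
Hom_{\mathbf{K}}\left( \mathcal{A}\otimes _{\mathbf{Set}^{\mathbf{C}_{X}}}R,G\right)


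\simeq

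Hom_{\mathbf{Set}^{\left( \mathbf{C}_{X}\right) ^{op}}}\left( R,Hom_{\mathbf{K}}\left( \mathcal{A},G\right) \right).
\]
Setting $\mathcal{B}=Hom_{\mathbf{K}}\left(\mathcal{A},G\right)$, which is a presheaf of sets on $X$, I can now invoke Proposition~\ref{Prop-Pretopology} directly: it gives
\[
Hom_{\mathbf{Set}^{\mathbf{C}^{op}}}\left( R,\mathcal{B}\right)


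\simeq

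\ker \left( \dprod\limits_{i\in I}\mathcal{B}\left( U_{i}\right)\rightrightarrows \dprod\limits_{i,j\in I}\mathcal{B}\left( U_{i}\times_{U}U_{j}\right) \right).
\]
Unwinding $\mathcal{B}=Hom_{\mathbf{K}}\left(\mathcal{A},G\right)$ and using that $Hom_{\mathbf{K}}\left(\_,G\right)$ carries coproducts to products and cokernels to kernels, this kernel is precisely $Hom_{\mathbf{K}}\left( \operatorname{coker}\left(\dcoprod_{i,j}\mathcal{A}(U_i\times_U U_j)\rightrightarrows\dcoprod_i\mathcal{A}(U_i)\right),G\right)$.

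Assembling these natural isomorphisms shows that for $K=\mathcal{A}\otimes _{\mathbf{Set}^{\mathbf{C}_{X}}}R$ and $L=\operatorname{coker}\left(\dcoprod_{i,j\in I}\mathcal{A}(U_i\times_U U_j)\rightrightarrows\dcoprod_{i\in I}\mathcal{A}(U_i)\right)$, the copresheaves $h^{K}$ and $h^{L}$ agree naturally in $G$, whence $K\simeq L$ by Yoneda's lemma — the same endgame used in part (\ref{Prop-A-ten-Set-CX-R}). The main obstacle I anticipate is purely bookkeeping: verifying that the two parallel arrows produced after applying $Hom_{\mathbf{K}}\left(\_,G\right)$ are genuinely the transposes of the maps appearing in the cokernel diagram (that is, that the restriction/gluing maps $\mathcal{B}(U_i)\to\mathcal{B}(U_i\times_U U_j)$ dualize to the inclusion-induced maps $\mathcal{A}(U_i\times_U U_j)\to\mathcal{A}(U_i)$ in the correct indexing), so that the kernel on the presheaf side matches the cokernel on the precosheaf side. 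Once that matching of the two parallel morphisms is checked, the result is immediate; no pretopology-specific estimates beyond Proposition~\ref{Prop-Pretopology} are required.
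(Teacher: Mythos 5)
Your proposal is correct and takes essentially the same route as the paper: the paper's proof also applies $Hom_{\mathbf{K}}\left( \_,G\right) $ for $G$ running over all objects of $\mathbf{K}$, reduces to the set-valued presheaf $Hom_{\mathbf{K}}\left( \mathcal{A},G\right) $ via \cite[Proposition I.2.12]{SGA4-1-MR0354652} (which is exactly the result underlying Proposition \ref{Prop-Pretopology}), and concludes by Yoneda. The only cosmetic difference is that you cite Proposition \ref{Prop-Pretopology} directly instead of the SGA4 statement it rests on.
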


\begin{proof}
Apply $Hom_{\mathbf{K}}\left( \_,G\right) $ when $G$ runs over all objects
of $\mathbf{K}$. Apply then \cite[Proposition I.2.12]{SGA4-1-MR0354652}, to
the presheaf of \textbf{sets} $Hom_{\mathbf{K}}\left( \mathcal{A},G\right) $.
\end{proof}

\begin{definition}
\label{Def-Plus-cosheaf}Let $X$ be a small site, and $\mathbf{K}$ be a
category (cocomplete and closed under cofiltered limits). For a precosheaf $%
\mathcal{A}$, define a precosheaf $\left( \mathcal{A}\right) _{+}^{\mathbf{K}%
}$ (or simply $\mathcal{A}_{+})$ by the following:%
\begin{equation*}
\mathcal{A}_{+}\left( U\right) =\underset{R}{\underleftarrow{\lim }}~%
\underset{\left( V\rightarrow U\right) \in \mathbf{C}_{R}}{\underrightarrow{%
\lim }}\mathcal{A}\left( V\right) =\underset{R}{\underleftarrow{\lim }}%
~\left( \mathcal{A}\otimes _{\mathbf{Set}^{\mathbf{C}_{X}}}R\right) .
\end{equation*}%
where $R\subseteq h^{U}$ runs over all covering sieves over $U$.
\end{definition}

\begin{proposition}
If the topology is induced by a pretopology (Definition \ref{Def-Pretopology}%
), then%
\begin{equation*}
\mathcal{A}_{+}\left( U\right) =\underset{\left\{ U_{i}\rightarrow U\right\} 
}{\underleftarrow{\lim }}~coker\left( \dcoprod\limits_{i,j\in I}\mathcal{A}%
\left( U_{i}\times _{U}U_{j}\right) \rightrightarrows \dcoprod\limits_{i\in
I}\mathcal{A}\left( U_{i}\right) \right) .
\end{equation*}
\end{proposition}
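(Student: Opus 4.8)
The plan is to obtain the formula by substituting the cover-wise description of each inner coend, supplied by Proposition \ref{Prop-Pretopology-Cosheaves}, into the definition of $\mathcal{A}_{+}$. By Definition \ref{Def-Plus-cosheaf},
\begin{equation*}
\mathcal{A}_{+}\left( U\right) =\underset{R}{\underleftarrow{\lim }}\left( \mathcal{A}\otimes _{\mathbf{Set}^{\mathbf{C}_{X}}}R\right),
\end{equation*}
where $R$ ranges over all covering sieves over $U$. Since the topology is induced by a pretopology, Definition \ref{Def-Pretopology} tells us that every covering sieve over $U$ has the form $R=R_{\left\{ U_{i}\rightarrow U\right\} }$ for some cover $\left\{ U_{i}\rightarrow U\right\} _{i\in I}$.

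Next I would apply Proposition \ref{Prop-Pretopology-Cosheaves} to each such sieve, rewriting the general term as
\begin{equation*}
\mathcal{A}\otimes _{\mathbf{Set}^{\mathbf{C}_{X}}}R_{\left\{ U_{i}\rightarrow U\right\} }\simeq coker\left( \dcoprod\limits_{i,j\in I}\mathcal{A}\left( U_{i}\times _{U}U_{j}\right) \rightrightarrows \dcoprod\limits_{i\in I}\mathcal{A}\left( U_{i}\right) \right).
\end{equation*}
Feeding this into the displayed limit produces precisely the claimed expression, the index set of covering sieves being replaced by the index set of covers $\left\{ U_{i}\rightarrow U\right\} $. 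This is essentially the whole argument, and it runs parallel to the one-line deduction of the analogous formula for $\mathcal{A}^{+}$ (the statement following Definition \ref{Def-Plus-sheaf}) from Proposition \ref{Prop-Pretopology}.

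The only step that I expect to require genuine care is the reindexing of the outer limit from covering sieves to covers. The assignment $\left\{ U_{i}\rightarrow U\right\} \mapsto R_{\left\{ U_{i}\rightarrow U\right\} }$ is a monotone surjection from the covers of $U$, ordered by refinement, onto the covering sieves over $U$, ordered by inclusion, and the functor $R\mapsto \mathcal{A}\otimes _{\mathbf{Set}^{\mathbf{C}_{X}}}R$ depends only on the sieve $R$ and not on a chosen generating cover. To justify that the limit computed over covers coincides with the limit over covering sieves, I would verify that this assignment is initial in the sense appropriate to inverse limits: given two covers $\left\{ U_{i}\right\} _{i\in I}$ and $\left\{ W_{k}\right\} _{k\in L}$ whose generated sieves are contained in a fixed covering sieve $R$, the pullback cover $\left\{ U_{i}\times _{U}W_{k}\right\} _{\left( i,k\right) \in I\times L}$ refines both and still generates a subsieve of $R$, so the comma categories controlling initiality are nonempty and connected. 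Stability of covers under these common refinements is guaranteed by the pretopology axioms invoked in Definition \ref{Def-Pretopology}, so the reindexing is legitimate and the two limits agree, completing the proof.
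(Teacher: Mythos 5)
Your proof is correct and follows essentially the same route as the paper, whose entire argument is the single line ``Follows from Proposition \ref{Prop-Pretopology-Cosheaves}'': you substitute that proposition's cover-wise formula for $\mathcal{A}\otimes _{\mathbf{Set}^{\mathbf{C}_{X}}}R$ into Definition \ref{Def-Plus-cosheaf}. Your additional verification that the assignment $\left\{ U_{i}\rightarrow U\right\} \mapsto R_{\left\{ U_{i}\rightarrow U\right\} }$ is initial for the outer limit (nonempty, connected comma categories via pullback refinements) is exactly the step the paper leaves implicit, and it is handled correctly.
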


\begin{proof}
Follows from Proposition \ref{Prop-Pretopology-Cosheaves}.
\end{proof}

\begin{lemma}
\label{Lemma-Epi-in-Pro(K)}Let%
\begin{equation*}
f:\mathcal{X}=\left( X_{i}\right) _{i\in \mathbf{I}}\longrightarrow \mathcal{%
Y}=\left( Y_{j}\right) _{j\in \mathbf{J}}
\end{equation*}%
be a morphism in $\mathbf{Pro}\left( \mathbf{K}\right) $. Then $f$ is an
epimorphism iff 
\begin{equation*}
Hom_{\mathbf{Pro}\left( \mathbf{K}\right) }\left( \mathcal{Y},G\right)
\longrightarrow Hom_{\mathbf{Pro}\left( \mathbf{K}\right) }\left( \mathcal{X}%
,G\right)
\end{equation*}%
is injective for any \textbf{rudimentary} (Remark \ref{Rem-Rudimentary})
object%
\begin{equation*}
G\in \mathbf{K\subseteq Pro}\left( \mathbf{K}\right) .
\end{equation*}
\end{lemma}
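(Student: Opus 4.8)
The plan is to reduce the epimorphism condition to the explicit description of morphisms in $\mathbf{Pro}\left( \mathbf{K}\right) $ supplied by Definition \ref{Def-Pro-C}. Recall that $f$ is an epimorphism exactly when, for every test object $Z\in \mathbf{Pro}\left( \mathbf{K}\right) $, the precomposition map
\[
f^{\ast }:Hom_{\mathbf{Pro}\left( \mathbf{K}\right) }\left( \mathcal{Y},Z\right) \longrightarrow Hom_{\mathbf{Pro}\left( \mathbf{K}\right) }\left( \mathcal{X},Z\right) ,\quad g\longmapsto g\circ f,
\]
is injective. The forward direction is then immediate: if $f$ is an epimorphism, this map is injective for \emph{every} $Z$, in particular for every rudimentary object $G\in \mathbf{K\subseteq Pro}\left( \mathbf{K}\right) $.

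For the converse I would probe a general test object $Z=\left( Z_{k}\right) _{k\in \mathbf{L}}$ one rudimentary component at a time. Writing out the Hom-formula of Definition \ref{Def-Pro-C} and recalling that $Hom_{\mathbf{Pro}\left( \mathbf{K}\right) }\left( \mathcal{X},Z_{k}\right) =\underrightarrow{\lim }_{i\in \mathbf{I}}Hom_{\mathbf{K}}\left( X_{i},Z_{k}\right) $ for each rudimentary $Z_{k}$, the first step is to establish the identification
\[
Hom_{\mathbf{Pro}\left( \mathbf{K}\right) }\left( \mathcal{X},Z\right) =\underleftarrow{\lim }_{k\in \mathbf{L}}\underrightarrow{\lim }_{i\in \mathbf{I}}Hom_{\mathbf{K}}\left( X_{i},Z_{k}\right) \simeq \underleftarrow{\lim }_{k\in \mathbf{L}}Hom_{\mathbf{Pro}\left( \mathbf{K}\right) }\left( \mathcal{X},Z_{k}\right) ,
\]
and likewise for $\mathcal{Y}$, both natural in the first variable. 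Under these identifications, $f^{\ast }$ on $Z$ becomes the cofiltered limit $\underleftarrow{\lim }_{k}$ of the component maps $f^{\ast }:Hom_{\mathbf{Pro}\left( \mathbf{K}\right) }\left( \mathcal{Y},Z_{k}\right) \rightarrow Hom_{\mathbf{Pro}\left( \mathbf{K}\right) }\left( \mathcal{X},Z_{k}\right) $.

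The argument then closes by left exactness. By hypothesis each of these component maps is injective, since each $Z_{k}$ is rudimentary. Because the limit functor $\underleftarrow{\lim }_{\mathbf{L}}:\mathbf{Set}^{\mathbf{L}}\rightarrow \mathbf{Set}$ is a right adjoint it preserves monomorphisms, and monomorphisms in $\mathbf{Set}^{\mathbf{L}}$ are detected componentwise; hence the limit of a levelwise injection is again injective. Thus $f^{\ast }$ is injective for every $Z$, and $f$ is an epimorphism.

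I expect the main obstacle to be the careful justification that the isomorphism $Hom_{\mathbf{Pro}\left( \mathbf{K}\right) }\left( \mathcal{X},Z\right) \simeq \underleftarrow{\lim }_{k}Hom_{\mathbf{Pro}\left( \mathbf{K}\right) }\left( \mathcal{X},Z_{k}\right) $ is genuinely natural in $\mathcal{X}$, so that the comparison square relating $f^{\ast }$ on $Z$ to the family of $f^{\ast }$ on the $Z_{k}$ actually commutes. This is exactly the standard fact that every pro-object $Z$ is the cofiltered limit in $\mathbf{Pro}\left( \mathbf{K}\right) $ of its rudimentary components $Z_{k}$, so that $Hom\left( \mathcal{X},-\right) $ carries it to the corresponding limit; it reads off directly from the double-limit Hom-formula of Definition \ref{Def-Pro-C}, since that formula is manifestly functorial in $\mathcal{X}$ through the transition maps of the indexing category $\mathbf{I}$.
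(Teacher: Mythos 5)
Your proposal is correct and follows essentially the same route as the paper's own proof: the forward direction is immediate, and for the converse one writes $Hom_{\mathbf{Pro}\left( \mathbf{K}\right) }\left( \mathcal{X},\mathcal{Z}\right)$ as the cofiltered limit over the rudimentary components $Z_{k}$ via the double-limit Hom-formula of Definition \ref{Def-Pro-C}, and observes that a limit of injections of sets is injective. Your added remark on the naturality of this identification in $\mathcal{X}$ is a sound (if implicit in the paper) point and does not change the argument.
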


\begin{proof}
Let 
\begin{equation*}
\mathcal{Z}=\left( Z_{s}\right) _{s\in \mathbf{S}}\in \mathbf{Pro}\left( 
\mathbf{K}\right) .
\end{equation*}%
For any $s\in \mathbf{S}$, the mapping%
\begin{equation*}
Hom_{\mathbf{Pro}\left( \mathbf{K}\right) }\left( \mathcal{Y},Z_{s}\right)
\longrightarrow Hom_{\mathbf{Pro}\left( \mathbf{K}\right) }\left( \mathcal{X}%
,Z_{s}\right)
\end{equation*}%
is injective. It follows that%
\begin{equation*}
Hom_{\mathbf{Pro}\left( \mathbf{K}\right) }\left( \mathcal{Y},\mathcal{Z}%
\right) =\underleftarrow{\lim }_{s\in \mathbf{S}}Hom_{\mathbf{Pro}\left( 
\mathbf{K}\right) }\left( \mathcal{Y},Z_{s}\right) \longrightarrow 
\underleftarrow{\lim }_{s\in \mathbf{S}}Hom_{\mathbf{Pro}\left( \mathbf{K}%
\right) }\left( \mathcal{X},Z_{s}\right) =Hom_{\mathbf{Pro}\left( \mathbf{K}%
\right) }\left( \mathcal{X},\mathcal{Z}\right)
\end{equation*}%
is injective as well, therefore $f$ is an epimorphism.
\end{proof}

\begin{corollary}
\label{Cor-Pro(K)-K-(co)complete-cosheaf}Assume $\mathbf{K}$ is cocomplete.

\begin{enumerate}
\item A morphism $f:G\rightarrow H$ in $\mathbf{K}$ is an epimorphism iff it
is an epimorphism in $\mathbf{Pro}\left( \mathbf{K}\right) $.

\item Let $\mathcal{A}\in \mathbf{pCS}\left( X,\mathbf{K}\right) $. Then $%
\mathcal{A}$ is coseparated (a cosheaf) iff it is coseparated (a cosheaf)
when considered as a precosheaf with values in $\mathbf{Pro}\left( \mathbf{K}%
\right) $.
\end{enumerate}
\end{corollary}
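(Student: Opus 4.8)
The plan is to deduce both equivalences from three facts: the embedding $\mathbf{K}\hookrightarrow \mathbf{Pro}\left( \mathbf{K}\right) $ (Remark \ref{Rem-Rudimentary}) is fully faithful, epimorphisms in $\mathbf{Pro}\left( \mathbf{K}\right) $ are detected on rudimentary objects (Lemma \ref{Lemma-Epi-in-Pro(K)}), and this embedding preserves all colimits existing in $\mathbf{K}$. For part (1) I would simply compare the two definitions of epimorphism. A map $f:G\rightarrow H$ is an epimorphism in $\mathbf{K}$ iff $Hom_{\mathbf{K}}\left( H,W\right) \rightarrow Hom_{\mathbf{K}}\left( G,W\right) $ is injective for every $W\in \mathbf{K}$, whereas by Lemma \ref{Lemma-Epi-in-Pro(K)} it is an epimorphism in $\mathbf{Pro}\left( \mathbf{K}\right) $ iff $Hom_{\mathbf{Pro}\left( \mathbf{K}\right) }\left( H,W\right) \rightarrow Hom_{\mathbf{Pro}\left( \mathbf{K}\right) }\left( G,W\right) $ is injective for every \emph{rudimentary} $W\in \mathbf{K}$. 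Since $G$, $H$ and $W$ all lie in $\mathbf{K}$, full faithfulness gives $Hom_{\mathbf{Pro}\left( \mathbf{K}\right) }\left( -,W\right) =Hom_{\mathbf{K}}\left( -,W\right) $ on them, so the two conditions are word for word the same.

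For part (2), recall (Definition \ref{Def-(pre)cosheaves}) that $\mathcal{A}$ is coseparated (respectively a cosheaf) precisely when, for every $U$ and every covering sieve $R$ over $U$, the canonical map $\phi _{R}:\mathcal{A}\otimes _{\mathbf{Set}^{\mathbf{C}_{X}}}R\rightarrow \mathcal{A}\left( U\right) $ is an epimorphism (respectively an isomorphism); by Proposition \ref{Prop-A-ten-Set-CX-R} its source is the colimit $\underset{\left( V\rightarrow U\right) \in \mathbf{C}_{R}}{\underrightarrow{\lim }}\mathcal{A}\left( V\right) $, taken in $\mathbf{K}$ under one reading and in $\mathbf{Pro}\left( \mathbf{K}\right) $ under the other. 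The heart of the matter is to check that these two colimits agree, i.e. that the $\mathbf{K}$-colimit, viewed as a rudimentary pro-object, is also the colimit in $\mathbf{Pro}\left( \mathbf{K}\right) $; then $\phi _{R}$ is literally one and the same morphism between rudimentary objects in both settings.

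To prove this colimit-preservation, which is the key step, I would use the $Hom$-formula of Definition \ref{Def-Pro-C}. For any $\mathcal{Z}=\left( Z_{s}\right) _{s\in \mathbf{S}}\in \mathbf{Pro}\left( \mathbf{K}\right) $, since the source is rudimentary,
\begin{eqnarray*}
Hom_{\mathbf{Pro}\left( \mathbf{K}\right) }\left( \underset{\left( V\rightarrow U\right) \in \mathbf{C}_{R}}{\underrightarrow{\lim }}\mathcal{A}\left( V\right) ,\mathcal{Z}\right) &\simeq& \underset{s\in \mathbf{S}}{\underleftarrow{\lim }}\,Hom_{\mathbf{K}}\left( \underset{\left( V\rightarrow U\right) \in \mathbf{C}_{R}}{\underrightarrow{\lim }}\mathcal{A}\left( V\right) ,Z_{s}\right) \\
&\simeq& \underset{s\in \mathbf{S}}{\underleftarrow{\lim }}\,\underset{\left( V\rightarrow U\right) \in \mathbf{C}_{R}}{\underleftarrow{\lim }}\,Hom_{\mathbf{K}}\left( \mathcal{A}\left( V\right) ,Z_{s}\right) \\
&\simeq& \underset{\left( V\rightarrow U\right) \in \mathbf{C}_{R}}{\underleftarrow{\lim }}\,Hom_{\mathbf{Pro}\left( \mathbf{K}\right) }\left( \mathcal{A}\left( V\right) ,\mathcal{Z}\right) ,
\end{eqnarray*}
where the middle isomorphism is the universal property of the colimit in $\mathbf{K}$ and the last one swaps the two inverse limits and reapplies the $Hom$-formula. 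This is exactly the universal property identifying the $\mathbf{K}$-colimit with the colimit in $\mathbf{Pro}\left( \mathbf{K}\right) $.

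With $\phi _{R}$ recognized as a single arrow between rudimentary objects, the conclusion is immediate: it is an epimorphism in $\mathbf{K}$ iff in $\mathbf{Pro}\left( \mathbf{K}\right) $ by part (1), and an isomorphism in one iff in the other because a fully faithful functor both preserves and reflects isomorphisms. Ranging over all $U$ and all covering sieves $R$ then yields the coseparated and the cosheaf equivalences at once. The only point needing care is the interchange of the two inverse limits in the display, which is harmless since both are ordinary limits in $\mathbf{Set}$ and limits commute with limits; everything else is formal once colimit-preservation is in place.
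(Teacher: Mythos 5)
Your argument is correct and takes essentially the same route as the paper: the paper's entire proof is the citation that the full inclusion $\mathbf{K}\subseteq \mathbf{Pro}\left( \mathbf{K}\right) $ commutes with colimits (dual to Corollary 6.1.17 of Kashiwara--Schapira), with the combination of that fact with Lemma \ref{Lemma-Epi-in-Pro(K)} and full faithfulness left implicit. The only difference is that you derive the colimit-preservation step by hand from the $Hom$-formula of Definition \ref{Def-Pro-C} rather than citing it, which is a correct and standard proof of the same fact.
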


\begin{proof}
The full inclusion $\mathbf{K}\subseteq \mathbf{Pro}\left( \mathbf{K}\right) 
$ commutes with colimits \cite[dual to Corollary 6.1.17]%
{Kashiwara-Categories-MR2182076}.
\end{proof}

\begin{proposition}
\label{Prop-Hom(K,G)}Let $\mathbf{K}$ be a cocomplete category, let $\mathbf{%
L}$\ be either $\mathbf{K}$ or $\mathbf{Pro}\left( \mathbf{K}\right) $, and
let $\mathcal{A}\in \mathbf{pCS}\left( X,\mathbf{L}\right) $ be a precosheaf.

\begin{enumerate}
\item $\mathcal{A}$ is coseparated iff $Hom_{\mathbf{L}}\left( \mathcal{A}%
,G\right) \in \mathbf{pS}\left( X,\mathbf{Set}\right) $ is separated for any 
$G\in \mathbf{K}$.

\item $\mathcal{A}$ is a cosheaf iff $Hom_{\mathbf{L}}\left( \mathcal{A}%
,G\right) \in \mathbf{pS}\left( X,\mathbf{Set}\right) $ is a sheaf for any $%
G\in \mathbf{K}$.
\end{enumerate}
\end{proposition}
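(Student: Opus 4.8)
The plan is to test everything against rudimentary objects $G\in\mathbf{K}$ and thereby convert the (co)sheaf condition on $\mathcal{A}$ into an ordinary sheaf condition on the presheaves of sets $\mathcal{B}_{G}:=Hom_{\mathbf{L}}\left(\mathcal{A},G\right)$. Fix $U\in\mathbf{C}_{X}$ and a covering sieve $R\subseteq h_{U}$, and write $\phi_{R,U}$ for the canonical morphism $\mathcal{A}\otimes_{\mathbf{Set}^{\mathbf{C}_{X}}}R\rightarrow\mathcal{A}\otimes_{\mathbf{Set}^{\mathbf{C}_{X}}}h_{U}\simeq\mathcal{A}\left(U\right)$ of Definition \ref{Def-(pre)cosheaves}. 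By definition $\mathcal{A}$ is coseparated (resp. a cosheaf) exactly when every $\phi_{R,U}$ is an epimorphism (resp. an isomorphism) in $\mathbf{L}$, so the whole statement reduces to two detection principles: that epimorphisms and isomorphisms in $\mathbf{L}$ can be tested by homming into rudimentary objects.

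The first step is to identify $Hom_{\mathbf{L}}\left(\phi_{R,U},G\right)$ with the comparison map appearing in the definition of a separated presheaf and of a sheaf. Since $\mathbf{L}$ is cocomplete, I would apply part (\ref{Prop-Hom-K-A-ten-Set-CX-G}) of the earlier Proposition with $\mathbf{L}$ in the role of $\mathbf{K}$ and any $G\in\mathbf{K}\subseteq\mathbf{L}$, obtaining a natural isomorphism $Hom_{\mathbf{L}}\left(\mathcal{A}\otimes_{\mathbf{Set}^{\mathbf{C}_{X}}}R,G\right)\simeq Hom_{\mathbf{Set}^{\left(\mathbf{C}_{X}\right)^{op}}}\left(R,\mathcal{B}_{G}\right)$, together with $Hom_{\mathbf{L}}\left(\mathcal{A}\left(U\right),G\right)=\mathcal{B}_{G}\left(U\right)\simeq Hom_{\mathbf{Set}^{\left(\mathbf{C}_{X}\right)^{op}}}\left(h_{U},\mathcal{B}_{G}\right)$ by Yoneda. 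Under these identifications $Hom_{\mathbf{L}}\left(\phi_{R,U},G\right)$ becomes precisely the map $\mathcal{B}_{G}\left(U\right)\rightarrow Hom_{\mathbf{Set}^{\left(\mathbf{C}_{X}\right)^{op}}}\left(R,\mathcal{B}_{G}\right)$ induced by $R\hookrightarrow h_{U}$ in Definition \ref{Def-(pre)sheaves}. Hence $\mathcal{B}_{G}$ is separated (resp. a sheaf) at $\left(U,R\right)$ iff $Hom_{\mathbf{L}}\left(\phi_{R,U},G\right)$ is injective (resp. bijective).

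It then remains to match ``$\phi_{R,U}$ epic'' with ``$Hom_{\mathbf{L}}\left(-,G\right)$ injective for all $G\in\mathbf{K}$'' and ``$\phi_{R,U}$ iso'' with ``$Hom_{\mathbf{L}}\left(-,G\right)$ bijective for all $G\in\mathbf{K}$''. When $\mathbf{L}=\mathbf{K}$ the first is the very definition of an epimorphism and the second is Yoneda's lemma, so both parts are immediate. When $\mathbf{L}=\mathbf{Pro}\left(\mathbf{K}\right)$ the epimorphism equivalence is exactly Lemma \ref{Lemma-Epi-in-Pro(K)}, which settles part (1). For the isomorphism equivalence in part (2) I would invoke Definition \ref{Def-Pro-C}: for $\mathcal{X}=\left(X_{i}\right)$ the assignment $G\mapsto Hom_{\mathbf{Pro}\left(\mathbf{K}\right)}\left(\mathcal{X},G\right)$ on rudimentary $G\in\mathbf{K}$ is the pro-representable functor $\underrightarrow{\lim}_{i}h^{X_{i}}$, and $\mathcal{X}\mapsto\left(\underrightarrow{\lim}_{i}h^{X_{i}}\right)$ is the fully faithful defining embedding $\mathbf{Pro}\left(\mathbf{K}\right)^{op}\hookrightarrow\mathbf{Set}^{\mathbf{K}}$; being fully faithful it is \emph{conservative}, so $\phi_{R,U}$ is an isomorphism iff $Hom_{\mathbf{Pro}\left(\mathbf{K}\right)}\left(\phi_{R,U},G\right)$ is bijective for every $G\in\mathbf{K}$.

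The one delicate point is this last conservativity: one must know that restricting the representable functor to rudimentary test objects detects \emph{isomorphisms}, not merely monomorphisms together with epimorphisms (which need not combine to an isomorphism in a general category). This is precisely what the realization of $\mathbf{Pro}\left(\mathbf{K}\right)$ as a full subcategory of $\mathbf{Set}^{\mathbf{K}}$ in Definition \ref{Def-Pro-C} guarantees, so no work beyond unwinding that definition is required; everything else is the formal bookkeeping of the hom-tensor isomorphism and the two detection principles.
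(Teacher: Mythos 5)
Your proposal is correct and follows essentially the same route as the paper: reduce to the comparison morphism $\mathcal{A}\otimes_{\mathbf{Set}^{\mathbf{C}_{X}}}R\rightarrow\mathcal{A}\left(U\right)$, transport it via Proposition \ref{Prop-Hom-KC-(B-Tensor-Set-G,A)} to the presheaf comparison map for $Hom_{\mathbf{L}}\left(\mathcal{A},G\right)$, and then detect epimorphisms by Lemma \ref{Lemma-Epi-in-Pro(K)} and isomorphisms by the full (hence conservative) embedding $\mathbf{Pro}\left(\mathbf{K}\right)^{op}\hookrightarrow\mathbf{Set}^{\mathbf{K}}$. The ``delicate point'' you isolate is exactly the one the paper's proof disposes of by the same appeal to fullness of that embedding.
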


\begin{proof}
It follows from Proposition \ref{Prop-Hom-KC-(B-Tensor-Set-G,A)}, that for
any sieve $R\subseteq h_{U}$, 
\begin{equation*}
Hom_{\mathbf{L}}\left( \mathcal{A}\otimes _{\mathbf{Set}^{\mathbf{C}%
_{X}}}R,G\right) 
\simeq%
Hom_{\mathbf{Set}^{\left( \mathbf{C}_{X}\right) ^{op}}}\left( R,Hom_{\mathbf{%
L}}\left( \mathcal{A},G\right) \right) .
\end{equation*}

Consider the diagrams%
\begin{equation*}
\mathcal{A}\otimes _{\mathbf{Set}^{\mathbf{C}_{X}}}R\overset{\varphi }{%
\longrightarrow }\mathcal{A}\otimes _{\mathbf{Set}^{\mathbf{C}_{X}}}h_{U}%
\simeq%
\mathcal{A}\left( U\right)
\end{equation*}%
and%
\begin{equation*}
\begin{diagram} Hom_{\QTR{bf}{L}}\left(\mathcal{A}\left( U\right) ,G\right)
& \rTo_{\varphi_{G}} & Hom_{\QTR{bf}{L}}\left( \mathcal{A}\otimes
_{\QTR{bf}{Set}^{\QTR{bf}{C}_{X}}}R,G\right) \\ \dTo^{\simeq} & &
\dTo_{\simeq} \\ Hom_{\QTR{bf}{L}}\left( \mathcal{A},G\right) \left(
U\right) & \rTo^{\psi_{G}} & Hom_{\QTR{bf}{Set}^{\left(
\QTR{bf}{C}_{X}\right) ^{op}}}\left( R,Hom_{\QTR{bf}{L}}\left(
\mathcal{A},G\right) \right) \\ \end{diagram}
\end{equation*}%
where $U$ runs over objects of $\mathbf{C}_{X}$, and $R$ runs over covering
sieves.

\begin{enumerate}
\item If $\mathbf{L}=\mathbf{K}$, then $\varphi $ is an epimorphism $\iff $ $%
\varphi _{G}$ is a monomorphism for any $G\in \mathbf{K}$ $\iff $ $\psi _{G}$
is a monomorphism for any $G\in \mathbf{K}$ $\iff $ $Hom_{\mathbf{L}}\left( 
\mathcal{A},G\right) $ is a separated presheaf of sets for any $G\in \mathbf{%
K}$.

If $\mathbf{L}=\mathbf{Pro}\left( \mathbf{K}\right) $, then, due to Lemma %
\ref{Lemma-Epi-in-Pro(K)}, $\varphi $ is an epimorphism $\iff $ $\varphi
_{G} $ is a monomorphism for any $G\in \mathbf{K}$ $\iff $ $Hom_{\mathbf{L}%
}\left( \mathcal{A},G\right) $ is a separated presheaf for any $G\in \mathbf{%
K}$.

\item If $\mathbf{L}=\mathbf{K}$, then $\varphi $ is an isomorphism $\iff $ $%
\varphi _{G}$ is an isomorphism for any $G\in \mathbf{K}$ $\iff $ $\psi _{G}$
is an isomorphism for any $G\in \mathbf{K}$ $\iff $ $Hom_{\mathbf{L}}\left( 
\mathcal{A},G\right) $ is a sheaf of sets for any $G\in \mathbf{K}$.

If $\mathbf{L}=\mathbf{Pro}\left( \mathbf{K}\right) $, then, since $\mathbf{%
Pro}\left( \mathbf{K}\right) ^{op}$ is a full subcategory of $\mathbf{Set}^{%
\mathbf{K}}$, $\varphi $ is an isomorphism $\iff $ $\varphi _{G}$ is an
isomorphism for any $G\in \mathbf{K}$ $\iff $ $Hom_{\mathbf{L}}\left( 
\mathcal{A},G\right) $ is a sheaf for any $G\in \mathbf{K}$.
\end{enumerate}
\end{proof}

\begin{proposition}
\label{Prop-Hom(Pro(K),G)}Assume that $\mathbf{K}$ is cocomplete. Let $%
\mathcal{A}\in \mathbf{pCS}\left( X,\mathbf{Pro}\left( \mathbf{K}\right)
\right) $, and $G\in \mathbf{K}$. Then there is a natural (on $\mathcal{A}$
and $G$) isomorphism%
\begin{equation*}
Hom_{\mathbf{Pro}\left( \mathbf{K}\right) }\left( \left( \mathcal{A}\right)
_{+}^{\mathbf{Pro}\left( \mathbf{K}\right) },G\right) 
\simeq%
\left( Hom_{\mathbf{Pro}\left( \mathbf{K}\right) }\left( \mathcal{A}%
,G\right) \right) _{\mathbf{Set}}^{+}.
\end{equation*}
\end{proposition}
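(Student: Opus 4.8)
The plan is to unwind both sides, match the shapes of the (co)limits that occur, and reduce everything to a single interchange of a cofiltered limit with a corepresentable functor. For a covering sieve $R$ over $U$ write $\mathcal{X}_{R}:=\mathcal{A}\otimes _{\mathbf{Set}^{\mathbf{C}_{X}}}R$. Definition \ref{Def-Plus-cosheaf}, applied with value category $\mathbf{Pro}\left( \mathbf{K}\right) $ (legitimate, since $\mathbf{K}$ cocomplete forces $\mathbf{Pro}\left( \mathbf{K}\right) $ to be cocomplete and every pro-category admits cofiltered limits), gives $\left( \mathcal{A}\right) _{+}^{\mathbf{Pro}\left( \mathbf{K}\right) }\left( U\right) =\underleftarrow{\lim }_{R}\mathcal{X}_{R}$, a cofiltered limit indexed by the covering sieves over $U$ ordered by inclusion. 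That index poset is cofiltered because the intersection of two covering sieves is again covering; hence its opposite, over which the colimit of Definition \ref{Def-Plus-sheaf} is formed, is filtered. So the two $+$-constructions run over the very same directed family of sieves, one as a cofiltered limit in $\mathbf{Pro}\left( \mathbf{K}\right) $ and the other as a filtered colimit in $\mathbf{Set}$.

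The heart of the matter is an interchange lemma: for a \emph{rudimentary} $G\in \mathbf{K}\subseteq \mathbf{Pro}\left( \mathbf{K}\right) $ (Remark \ref{Rem-Rudimentary}), the contravariant functor $Hom_{\mathbf{Pro}\left( \mathbf{K}\right) }\left( \_,G\right) $ sends cofiltered limits to filtered colimits:
\[
Hom_{\mathbf{Pro}\left( \mathbf{K}\right) }\left( \underleftarrow{\lim }_{R}\mathcal{X}_{R},G\right) \simeq \underrightarrow{\lim }_{R}Hom_{\mathbf{Pro}\left( \mathbf{K}\right) }\left( \mathcal{X}_{R},G\right) .
\]
To prove this I would pass to the description $\mathbf{Pro}\left( \mathbf{K}\right) =\mathbf{L}^{op}$ of Definition \ref{Def-Pro-C}, where $\mathbf{L}\subseteq \mathbf{Set}^{\mathbf{K}}$ is the closure of the representables $h^{?}$ under filtered colimits. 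A cofiltered limit in $\mathbf{Pro}\left( \mathbf{K}\right) $ is a filtered colimit in $\mathbf{L}$, and since $\mathbf{L}$ is closed under such colimits they are computed pointwise in $\mathbf{Set}^{\mathbf{K}}$. The rudimentary $G$ corresponds to the representable $h^{G}\in \mathbf{L}$, so Yoneda identifies $Hom_{\mathbf{Pro}\left( \mathbf{K}\right) }\left( \mathcal{X},G\right) \simeq F_{\mathcal{X}}\left( G\right) $, where $F_{\mathcal{X}}\in \mathbf{L}$ is the functor attached to $\mathcal{X}$; evaluation at $G$ commutes with the pointwise filtered colimit, which is exactly the displayed interchange. (Equivalently one may cite \cite{Kashiwara-Categories-MR2182076}: objects of $\mathbf{K}^{op}$ are finitely presentable in $\mathbf{Ind}\left( \mathbf{K}^{op}\right) $, whose dual is precisely this claim.)

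With the interchange in hand the rest is formal assembly. Applying it to $\mathcal{X}_{R}$ and then invoking part (\ref{Prop-Hom-K-A-ten-Set-CX-G}) of the Hom/tensor proposition — which holds verbatim with the cocomplete category $\mathbf{Pro}\left( \mathbf{K}\right) $ in place of $\mathbf{K}$ and with $G$ rudimentary — yields
\[
Hom_{\mathbf{Pro}\left( \mathbf{K}\right) }\left( \left( \mathcal{A}\right) _{+}^{\mathbf{Pro}\left( \mathbf{K}\right) }\left( U\right) ,G\right) \simeq \underrightarrow{\lim }_{R}Hom_{\mathbf{Set}^{\left( \mathbf{C}_{X}\right) ^{op}}}\left( R,Hom_{\mathbf{Pro}\left( \mathbf{K}\right) }\left( \mathcal{A},G\right) \right) ,
\]
and by Definition \ref{Def-Plus-sheaf} the right-hand side is exactly $\left( Hom_{\mathbf{Pro}\left( \mathbf{K}\right) }\left( \mathcal{A},G\right) \right) _{\mathbf{Set}}^{+}\left( U\right) $. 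Naturality in $G$ and $\mathcal{A}$ is inherited because every isomorphism used (the Hom/tensor adjunction, Yoneda, and the pointwise description of filtered colimits) is natural in both arguments.

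I expect the only genuine obstacle to be the interchange lemma, and within it the care needed to verify that the cofiltered limit defining $\left( \mathcal{A}\right) _{+}^{\mathbf{Pro}\left( \mathbf{K}\right) }$ is really computed in $\mathbf{Pro}\left( \mathbf{K}\right) $ as a filtered colimit in $\mathbf{L}$ that stays inside $\mathbf{L}$. Rudimentariness of $G$ is indispensable here: for a general pro-object the inner $\underleftarrow{\lim }_{j}$ in the Hom-formula of Definition \ref{Def-Pro-C} would block the commutation. Everything else reduces to bookkeeping of (co)limit directions and to results already established in the excerpt.
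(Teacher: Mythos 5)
Your proof is correct and follows essentially the same route as the paper's: the paper's entire proof is a one-sentence citation that $Hom_{\mathbf{Pro}\left( \mathbf{K}\right) }\left( \_,G\right) $ converts small colimits into limits and cofiltered limits into filtered colimits (dual to Corollary 6.1.17 and Theorem 6.1.8 of Kashiwara--Schapira), which are precisely the two facts you establish (the interchange lemma by hand, the colimit step via the already-proved Hom/tensor proposition) before assembling them. The extra care you take in proving the interchange lemma for rudimentary $G$ is exactly the content of the cited dual theorem, so nothing is gained or lost beyond explicitness.
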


\begin{proof}
The functor 
\begin{equation*}
Hom_{\mathbf{Pro}\left( \mathbf{K}\right) }\left( \_,G\right) :\mathbf{Pro}%
\left( \mathbf{K}\right) \longrightarrow \mathbf{Set}^{op}
\end{equation*}%
commutes with small colimits \cite[dual to Corollary 6.1.17]%
{Kashiwara-Categories-MR2182076}, and cofiltered limits \cite[dual to
Theorem 6.1.8]{Kashiwara-Categories-MR2182076}.
\end{proof}

\begin{theorem}
\label{Th-Properties-of-Plus-Cosheaves}Assume $\mathbf{K}$ is cocomplete.
Let 
\begin{equation*}
\lambda \left( \mathcal{A}\right) :\mathcal{A}_{+}=\left( \mathcal{A}\right)
_{+}^{\mathbf{Pro}\left( \mathbf{K}\right) }\longrightarrow \mathcal{A}
\end{equation*}
be the canonical morphism of functors%
\begin{equation*}
\left( {}\right) _{+}\longrightarrow \mathbf{1}_{\mathbf{pCS}\left( X,%
\mathbf{Pro}\left( \mathbf{K}\right) \right) }:\mathbf{pCS}\left( X,\mathbf{%
Pro}\left( \mathbf{K}\right) \right) \longrightarrow \mathbf{pCS}\left( X,%
\mathbf{Pro}\left( \mathbf{K}\right) \right) .
\end{equation*}%
Then:

\begin{enumerate}
\item The functor $\left( {}\right) _{+}$ is right exact.

\item For any $\mathcal{A}$, $\mathcal{A}_{+}$ is a coseparated precosheaf.

\item A presheaf $\mathcal{A}$ is coseparated iff $\lambda \left( \mathcal{A}%
\right) $ is an epimorphism. In that case $\mathcal{A}_{+}$ is a cosheaf.

\item The following conditions are equivalent:

\begin{enumerate}
\item $\lambda \left( \mathcal{A}\right) $ is an isomorphism.

\item $\mathcal{A}$ is a cosheaf.
\end{enumerate}

\item The functor $\left( {}\right) _{\#}^{\mathbf{Pro}\left( \mathbf{K}%
\right) }=\left( {}\right) _{++}^{\mathbf{Pro}\left( \mathbf{K}\right) }$ is
right adjoint to the inclusion%
\begin{equation*}
i_{X,\mathbf{Pro}\left( \mathbf{K}\right) }:\mathbf{pCS}\left( X,\mathbf{Pro}%
\left( \mathbf{K}\right) \right) \hookrightarrow \mathbf{pCS}\left( X,%
\mathbf{Pro}\left( \mathbf{K}\right) \right) .
\end{equation*}
\end{enumerate}
\end{theorem}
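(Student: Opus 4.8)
The plan is to prove all five parts by a uniform dualization of Theorem \ref{Th-Properties-of-Plus} (applied with $\mathbf{K}=\mathbf{Set}$, which is locally finitely presentable), transporting every assertion about precosheaves valued in $\mathbf{Pro}\left( \mathbf{K}\right)$ into one about presheaves of sets via the functors $Hom_{\mathbf{Pro}\left( \mathbf{K}\right) }\left( \_,G\right) $ with $G$ ranging over the rudimentary objects $\mathbf{K}\subseteq \mathbf{Pro}\left( \mathbf{K}\right) $. The one computational input is Proposition \ref{Prop-Hom(Pro(K),G)}, giving a natural isomorphism $Hom_{\mathbf{Pro}\left( \mathbf{K}\right) }\left( \mathcal{A}_{+},G\right) \simeq Hom_{\mathbf{Pro}\left( \mathbf{K}\right) }\left( \mathcal{A},G\right) ^{+}$ of presheaves of sets, which identifies the effect of $Hom\left( \_,G\right) $ on $\lambda \left( \mathcal{A}\right) :\mathcal{A}_{+}\rightarrow \mathcal{A}$ with the Set-level morphism $Hom\left( \mathcal{A},G\right) \rightarrow Hom\left( \mathcal{A},G\right) ^{+}$. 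Two detection principles drive the reductions: objects and isomorphisms in $\mathbf{Pro}\left( \mathbf{K}\right) $ are detected by the collection $Hom\left( \_,G\right) $, $G\in \mathbf{K}$, since $\mathbf{Pro}\left( \mathbf{K}\right) ^{op}$ is a full subcategory of $\mathbf{Set}^{\mathbf{K}}$ (Definition \ref{Def-Pro-C}); and epimorphisms are detected by injectivity of $Hom\left( \_,G\right) $ on rudimentary $G$ (Lemma \ref{Lemma-Epi-in-Pro(K)}).

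For part (1), colimits of precosheaves are formed objectwise and $Hom_{\mathbf{Pro}\left( \mathbf{K}\right) }\left( \_,G\right) $ turns them into limits; hence for a finite colimit $\mathcal{A}=\mathrm{colim}_{k}\mathcal{A}_{k}$ one computes, using Proposition \ref{Prop-Hom(Pro(K),G)} and the left exactness of $\left( \_\right) ^{+}$ from Theorem \ref{Th-Properties-of-Plus}(1),
\begin{equation*}
Hom\left( \mathcal{A}_{+},G\right) \simeq Hom\left( \mathcal{A},G\right) ^{+}\simeq \bigl( \lim_{k}Hom\left( \mathcal{A}_{k},G\right) \bigr) ^{+}\simeq \lim_{k}Hom\left( \mathcal{A}_{k},G\right) ^{+}\simeq \lim_{k}Hom\left( \left( \mathcal{A}_{k}\right) _{+},G\right) ,
\end{equation*}
naturally in $G\in \mathbf{K}$, whence $\mathcal{A}_{+}\simeq \mathrm{colim}_{k}\left( \mathcal{A}_{k}\right) _{+}$ by the detection principle. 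Part (2) is then immediate: by Proposition \ref{Prop-Hom(K,G)}, $\mathcal{A}_{+}$ is coseparated iff $Hom\left( \mathcal{A}_{+},G\right) \simeq Hom\left( \mathcal{A},G\right) ^{+}$ is separated for all $G\in \mathbf{K}$, which holds by Theorem \ref{Th-Properties-of-Plus}(2).

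For parts (3) and (4) I would apply $Hom\left( \_,G\right) $ to $\lambda \left( \mathcal{A}\right) $ and invoke Lemma \ref{Lemma-Epi-in-Pro(K)}. Thus $\lambda \left( \mathcal{A}\right) $ is an epimorphism iff $Hom\left( \mathcal{A},G\right) \rightarrow Hom\left( \mathcal{A},G\right) ^{+}$ is a monomorphism for every $G\in \mathbf{K}$, which by Theorem \ref{Th-Properties-of-Plus}(3) happens iff each $Hom\left( \mathcal{A},G\right) $ is separated, i.e.\ (Proposition \ref{Prop-Hom(K,G)}) iff $\mathcal{A}$ is coseparated; and in that case each $Hom\left( \mathcal{A},G\right) ^{+}$ is a sheaf, so $\mathcal{A}_{+}$ is a cosheaf. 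Similarly $\lambda \left( \mathcal{A}\right) $ is an isomorphism iff every $Hom\left( \mathcal{A},G\right) \rightarrow Hom\left( \mathcal{A},G\right) ^{+}$ is one, iff (Theorem \ref{Th-Properties-of-Plus}(4)) every $Hom\left( \mathcal{A},G\right) $ is a sheaf, iff $\mathcal{A}$ is a cosheaf.

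For part (5), first $\mathcal{A}_{++}$ is a cosheaf: $\mathcal{A}_{+}$ is coseparated by (2), so $\left( \mathcal{A}_{+}\right) _{+}$ is a cosheaf by (3). Given a cosheaf $\mathcal{C}$ and a morphism $\mathcal{C}\rightarrow \mathcal{A}$, (4) gives $\mathcal{C}\simeq \mathcal{C}_{++}$, and functoriality of $\left( \_\right) _{++}$ together with the counit $\mathcal{A}_{++}\rightarrow \mathcal{A}$ produces a factorization $\mathcal{C}\rightarrow \mathcal{A}_{++}\rightarrow \mathcal{A}$ through the full subcategory of cosheaves. For uniqueness, two lifts $\alpha ,\beta :\mathcal{C}\rightarrow \mathcal{A}_{++}$ become, under $Hom\left( \_,G\right) $, two maps $Hom\left( \mathcal{A},G\right) ^{++}\rightarrow Hom\left( \mathcal{C},G\right) $ into the sheaf $Hom\left( \mathcal{C},G\right) $ that agree after precomposition with $Hom\left( \mathcal{A},G\right) \rightarrow Hom\left( \mathcal{A},G\right) ^{++}$; the Set-level uniqueness of Theorem \ref{Th-Properties-of-Plus}(5) forces $Hom\left( \alpha ,G\right) =Hom\left( \beta ,G\right) $ for all $G\in \mathbf{K}$, hence $\alpha =\beta $ by detection. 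The main obstacle is bookkeeping the variance: because coseparatedness is an \emph{epimorphism} condition, the crucial detection step in (3) must pass through Lemma \ref{Lemma-Epi-in-Pro(K)} rather than a naive Yoneda argument, and one must check that the isomorphism of Proposition \ref{Prop-Hom(Pro(K),G)} is compatible with $\lambda $ so that the Set-level $\lambda $ is genuinely recovered.
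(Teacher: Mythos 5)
Your proposal is correct and follows essentially the same route as the paper: parts (2)--(5) are proved exactly as in the text, by transporting everything through $Hom_{\mathbf{Pro}\left( \mathbf{K}\right) }\left( \_,G\right) $ for rudimentary $G$, invoking Proposition \ref{Prop-Hom(Pro(K),G)}, Proposition \ref{Prop-Hom(K,G)}, Lemma \ref{Lemma-Epi-in-Pro(K)} and the properties of the $\mathbf{Set}$-valued plus construction, and detecting isomorphisms via the full embedding of $\mathbf{Pro}\left( \mathbf{K}\right) ^{op}$ into $\mathbf{Set}^{\mathbf{K}}$. The only divergence is part (1), where the paper argues directly that $\left( {}\right) _{+}$ is the composite of a colimit with a cofiltered limit, the latter commuting with finite colimits in $\mathbf{Pro}\left( \mathbf{K}\right) $, whereas you deduce right exactness from the left exactness of the $\mathbf{Set}$-level $\left( {}\right) ^{+}$ by the same detection principle; both arguments are valid.
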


\begin{proof}
Let $G$ run over objects of $\mathbf{K}$ (not of $\mathbf{Pro}\left( \mathbf{%
K}\right) $).

\begin{enumerate}
\item The functor $\mathcal{A}\mapsto \mathcal{A}_{+}$ is the composition of
a colimit $\underset{\left( V\rightarrow U\right) \in \mathbf{C}_{R}}{%
\underrightarrow{\lim }}\mathcal{A}\left( V\right) $ which commutes with
arbitrary colimits, and a codirected limit $\underset{R\subseteq h_{U}}{%
\underleftarrow{\lim }}$ which commutes with \textbf{finite} colimits \cite[%
dual to Proposition 6.1.19]{Kashiwara-Categories-MR2182076}. Therefore, $%
\left( {}\right) _{+}$ is right exact (commutes with finite colimits).

\item Due to Proposition \ref{Prop-Hom(Pro(K),G)},%
\begin{equation*}
Hom_{\mathbf{Pro}\left( \mathbf{K}\right) }\left( \mathcal{A}_{+},G\right) 
\simeq%
\left( Hom_{\mathbf{Pro}\left( \mathbf{K}\right) }\left( \mathcal{A}%
,G\right) \right) ^{+}.
\end{equation*}%
Due to \cite[Proposition II.3.2]{SGA4-1-MR0354652}, $Hom_{\mathbf{K}}\left( 
\mathcal{A}_{+},G\right) $ is separated for any $G\in \mathbf{K}$. Apply
Proposition \ref{Prop-Hom(K,G)}.

\item Due to \cite[Proposition II.3.2]{SGA4-1-MR0354652},%
\begin{equation*}
Hom_{\mathbf{Pro}\left( \mathbf{K}\right) }\left( \mathcal{A},G\right)
\longrightarrow Hom_{\mathbf{Pro}\left( \mathbf{K}\right) }\left( \mathcal{A}%
_{+},G\right)
\end{equation*}%
is a monomorphism iff $Hom_{\mathbf{Pro}\left( \mathbf{K}\right) }\left( 
\mathcal{A},G\right) $ is separated. In that case $Hom_{\mathbf{Pro}\left( 
\mathbf{K}\right) }\left( \mathcal{A}_{+},G\right) $ is a sheaf. Apply
Proposition \ref{Prop-Hom(K,G)}.

\item Due to \cite[Proposition II.3.2]{SGA4-1-MR0354652}, 
\begin{equation*}
Hom_{\mathbf{Pro}\left( \mathbf{K}\right) }\left( \mathcal{A},G\right)
\longrightarrow Hom_{\mathbf{Pro}\left( \mathbf{K}\right) }\left( \mathcal{A}%
_{+},G\right)
\end{equation*}%
is an isomorphism iff $Hom_{\mathbf{Pro}\left( \mathbf{K}\right) }\left( 
\mathcal{A},G\right) $ is a sheaf for any $G\in \mathbf{K}$. Apply
Proposition \ref{Prop-Hom(K,G)}.

\item We need to prove that for any cosheaf $\mathcal{B}$, any morphism $%
\mathcal{B\rightarrow A}$ has a unique decomposition%
\begin{equation*}
\mathcal{B\longrightarrow A}_{++}\longrightarrow \mathcal{A}.
\end{equation*}%
The existence is easy: since $\mathcal{B}_{++}\rightarrow \mathcal{B}$ is an
isomorphism, take the decomposition%
\begin{equation*}
\mathcal{B}%
\simeq%
\mathcal{B}_{++}\longrightarrow \mathcal{A}_{++}\longrightarrow \mathcal{A}.
\end{equation*}%
To prove uniqueness, consider two decompositions%
\begin{equation*}
\begin{diagram} \mathcal{B} & \pile{\rTo^{\alpha} \\ \rTo_{\beta}} &
\mathcal{A}_{++} & \rTo & \mathcal{A} \\ \end{diagram}
\end{equation*}%
and apply $Hom_{\mathbf{Pro}\left( \mathbf{K}\right) }\left( \_,G\right) $:%
\begin{equation*}
\begin{diagram}
Hom_{\QTR{bf}{Pro\left({K}\right)}}\left(\mathcal{A},G\right) & \rTo &
Hom_{\QTR{bf}{Pro\left({K}\right)}}\left(\mathcal{A},G\right) ^{++} &
\pile{\rTo^{Hom_{\QTR{bf}{Pro\left({K}\right)}}\left(\alpha,G \right) } \\
\rTo_{Hom_{\QTR{bf}{Pro\left({K}\right)}}\left(\beta,G \right)}} &
Hom_{\QTR{bf}{Pro\left({K}\right)}}\left(\mathcal{B},G \right). \\
\end{diagram}
\end{equation*}%
It follows that $Hom_{\mathbf{Pro}\left( \mathbf{K}\right) }\left( \alpha
,G\right) =Hom_{\mathbf{Pro}\left( \mathbf{K}\right) }\left( \beta ,G\right) 
$ for any $G\in \mathbf{K}$, therefore $\alpha =\beta $, because $\mathbf{Pro%
}\left( \mathbf{K}\right) ^{op}$ is a full subcategory of $\mathbf{Set}^{%
\mathbf{K}}$.
\end{enumerate}
\end{proof}

\subsection{Topological spaces}

Throughout this subsection, $X$ is a topological space considered as the
site $OPEN\left( X\right) $ (see Example \ref{Site-TOP} and Remark \ref%
{Denote-standard-site-simply}), and $\mathbf{K}$ is a cocomplete category.

\begin{proposition}
\label{Prop-Empty-value}Let $\mathcal{A}$ be a cosheaf with values in $%
\mathbf{K}$. Then $\mathcal{A}\left( \varnothing \right) $ is an initial
object in $\mathbf{K}$.
\end{proposition}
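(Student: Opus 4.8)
The plan is to exploit the fact that in the standard site $OPEN(X)$ the empty set $\varnothing$ admits the \emph{empty} family as a cover, and to feed this cover into the cosheaf condition. Recall from Example \ref{Site-TOP} that a family $\{U_i\subseteq U\}_{i\in I}$ is a cover of $U$ precisely when $\bigcup_{i\in I}U_i=U$. Taking $U=\varnothing$ and $I=\varnothing$, the empty union equals $\varnothing$, so the empty family is indeed a cover of $\varnothing$. First I would identify the covering sieve $R$ it generates: by Definition \ref{Def-Pretopology}, an inclusion $(V\subseteq\varnothing)$ lies in $R(V)$ iff it factors through some $U_i$, and since there are no indices $i$, we get $R(V)=\varnothing$ for every $V$. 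Thus $R$ is the empty sieve over $\varnothing$, and the comma category $\mathbf{C}_R$ has no objects.

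Next I would compute $\mathcal{A}\otimes_{\mathbf{Set}^{\mathbf{C}_X}}R$ using Proposition \ref{Prop-A-ten-Set-CX-R}, which gives
\begin{equation*}
\mathcal{A}\otimes_{\mathbf{Set}^{\mathbf{C}_X}}R\simeq\underset{(V\rightarrow\varnothing)\in\mathbf{C}_R}{\underrightarrow{\lim}}\mathcal{A}(V).
\end{equation*}
Since $\mathbf{C}_R$ is the empty category, this is a colimit over the empty diagram, which is by definition an initial object of $\mathbf{K}$; it exists because $\mathbf{K}$ is cocomplete. Equivalently, one may use Proposition \ref{Prop-Pretopology-Cosheaves}: for the empty cover both coproducts $\coprod_{i,j}\mathcal{A}(U_i\times_U U_j)$ and $\coprod_i\mathcal{A}(U_i)$ are empty coproducts, hence initial, and the coequalizer of the resulting pair of parallel arrows between initial objects is again initial.

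Finally, because $\mathcal{A}$ is a cosheaf, the canonical morphism
\begin{equation*}
\mathcal{A}\otimes_{\mathbf{Set}^{\mathbf{C}_X}}R\longrightarrow\mathcal{A}(\varnothing)
\end{equation*}
is an isomorphism for this covering sieve $R$. Combining with the previous step, $\mathcal{A}(\varnothing)$ is isomorphic to an initial object of $\mathbf{K}$, and any object isomorphic to an initial object is itself initial. The only genuinely delicate point is the very first step --- checking that the empty family legitimately counts as a cover of $\varnothing$, so that the empty sieve is a \emph{covering} sieve to which the cosheaf axiom applies; once this is granted, the remainder is a formal manipulation of empty (co)limits, and I expect no further obstacles.
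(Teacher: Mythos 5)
Your proof is correct and follows essentially the same route as the paper: the paper's own argument likewise feeds the empty covering of $\varnothing$ into the cosheaf condition, observes that the relevant (co)products over the empty index set are initial objects, and concludes that $\mathcal{A}\left( \varnothing \right) $ is initial. Your additional care in identifying the empty sieve and the empty comma category $\mathbf{C}_{R}$ is a harmless elaboration of the same idea.
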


\begin{proof}
Let $\left\{ U_{i}\rightarrow \varnothing \right\} _{i\in I}$ be the empty
covering, i.e. the set of indices $I$ is empty. It is clear that%
\begin{equation*}
Y=\dcoprod\limits_{i\in I}\mathcal{A}\left( U_{i}\right)
\end{equation*}%
is an initial object in $\mathbf{C}$.

If $\mathcal{A}$ is a cosheaf, then%
\begin{equation*}
\mathcal{A}\left( \varnothing \right) =coker\left( \dcoprod\limits_{i\in
\varnothing }\mathcal{A}\left( U_{i}\right) \rightrightarrows
\dcoprod\limits_{\left( i,j\right) \in \varnothing }\mathcal{A}\left(
U_{i}\cap U_{j}\right) \right) =coker\left( Y\rightrightarrows Y\right) 
\simeq%
Y.
\end{equation*}
\end{proof}

\begin{corollary}
\label{Corr-Empty-value}If, in the conditions of Proposition \ref%
{Prop-Empty-value}, $\mathbf{K}$ is $\mathbf{Set}$ or $\mathbf{Pro}\left( 
\mathbf{Set}\right) $, then $\mathcal{A}\left( \varnothing \right)
=\varnothing $. If $\mathbf{K}$ is $\mathbf{Ab}$ or $\mathbf{Pro}\left( 
\mathbf{Ab}\right) $, then $\mathcal{A}\left( \varnothing \right) =0$.
\end{corollary}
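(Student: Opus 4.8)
The plan is to invoke Proposition \ref{Prop-Empty-value} to reduce the statement to the computation of the initial object in each of the four categories. By that proposition, $\mathcal{A}\left( \varnothing \right) $ is an initial object, and since initial objects are unique up to canonical isomorphism, it suffices to exhibit one explicit initial object in $\mathbf{Set}$, $\mathbf{Ab}$, $\mathbf{Pro}\left( \mathbf{Set}\right) $, and $\mathbf{Pro}\left( \mathbf{Ab}\right) $ and to match it with the claimed value.

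First I would dispatch the two elementary cases. In $\mathbf{Set}$ the empty set $\varnothing $ is initial, since for every set $Y$ there is exactly one map $\varnothing \rightarrow Y$ (the empty function); in $\mathbf{Ab}$ the trivial group $0$ is initial, since the only homomorphism $0\rightarrow A$ is the zero map. This settles the two assertions for $\mathbf{K}=\mathbf{Set}$ and $\mathbf{K}=\mathbf{Ab}$.

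For the pro-categories I would use that the initial object is precisely the colimit of the empty diagram, together with the fact (dual to \cite[Corollary 6.1.17]{Kashiwara-Categories-MR2182076}, as already invoked in Corollary \ref{Cor-Pro(K)-K-(co)complete-cosheaf}) that the full embedding $\mathbf{K}\hookrightarrow \mathbf{Pro}\left( \mathbf{K}\right) $ commutes with colimits. Hence the rudimentary pro-object (Remark \ref{Rem-Rudimentary}) associated to the initial object of $\mathbf{K}$ is initial in $\mathbf{Pro}\left( \mathbf{K}\right) $; taking $\mathbf{K}=\mathbf{Set}$ yields the rudimentary $\varnothing $ and $\mathbf{K}=\mathbf{Ab}$ the rudimentary $0$. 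Alternatively, one may verify this directly from the Hom-formula of Definition \ref{Def-Pro-C}: for any $\mathcal{Y}=\left( Y_{j}\right) _{j\in \mathbf{J}}$ one has
\[
Hom_{\mathbf{Pro}\left( \mathbf{K}\right) }\left( G,\mathcal{Y}\right) =\underleftarrow{\lim }_{j}\underrightarrow{\lim }_{i}Hom_{\mathbf{K}}\left( G,Y_{j}\right) ,
\]
and choosing $G$ to be the (rudimentary) initial object of $\mathbf{K}$ makes every inner Hom-set a singleton, so the double limit is again a singleton. This exhibits a unique morphism from the rudimentary initial object to every pro-object, as required.

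I do not anticipate any serious obstacle: the only point that needs care is confirming that the initial object of $\mathbf{K}$, viewed as a rudimentary pro-object, coincides with the initial object of $\mathbf{Pro}\left( \mathbf{K}\right) $, and this is immediate either from colimit-preservation of the embedding or from the direct Hom-computation above. Everything else is the standard identification of initial objects in $\mathbf{Set}$ and $\mathbf{Ab}$.
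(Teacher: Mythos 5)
Your proposal is correct and is precisely the argument the paper intends: the paper states this corollary with no written proof, treating it as the immediate identification of the initial objects ($\varnothing$ in $\mathbf{Set}$, $0$ in $\mathbf{Ab}$, and their rudimentary images in the pro-categories) once Proposition \ref{Prop-Empty-value} is in hand. Your verification that the rudimentary initial object is initial in $\mathbf{Pro}\left( \mathbf{K}\right) $ --- via colimit-preservation of the embedding or directly from the $Hom$-formula of Definition \ref{Def-Pro-C} --- correctly supplies the one detail the paper leaves implicit.
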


\begin{corollary}
\label{AB-cosheaf-is-not-SET-cosheaf}A cosheaf with values in $\mathbf{Ab}$
or $\mathbf{Pro}\left( \mathbf{Ab}\right) $ \textbf{is never a cosheaf} when
considered as a precosheaf with values in $\mathbf{Set}$ or $\mathbf{Pro}%
\left( \mathbf{Set}\right) $.
\end{corollary}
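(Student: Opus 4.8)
The plan is to obtain a contradiction purely from the value of the (pre)cosheaf on the empty open set $\varnothing$, exploiting that the initial objects of $\mathbf{Ab}$ and of $\mathbf{Set}$ have \emph{different} underlying sets. The only inputs I need are Proposition \ref{Prop-Empty-value} (a cosheaf sends $\varnothing$ to an initial object) and its Corollary \ref{Corr-Empty-value}, which pins down that initial object in each of the four categories of interest.

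First I would treat the case of a cosheaf $\mathcal{A}$ with values in $\mathbf{Ab}$. By Corollary \ref{Corr-Empty-value} we have $\mathcal{A}\left( \varnothing \right) =0$, the trivial group, whose underlying set is a \emph{singleton}. Now ``considering $\mathcal{A}$ as a precosheaf with values in $\mathbf{Set}$'' means post-composing with the forgetful functor $\mathbf{Ab}\rightarrow \mathbf{Set}$, so the value at $\varnothing$ of the resulting precosheaf is that one-point set. If this precosheaf \emph{were} a cosheaf of sets, Corollary \ref{Corr-Empty-value} (applied now in $\mathbf{Set}$) would force its value at $\varnothing$ to be the empty set $\varnothing$. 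Since a singleton is not empty, this is a contradiction, and $\mathcal{A}$ is not a cosheaf of sets.

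The $\mathbf{Pro}$-variant runs in parallel. For a cosheaf $\mathcal{A}$ with values in $\mathbf{Pro}\left( \mathbf{Ab}\right) $, Corollary \ref{Corr-Empty-value} gives $\mathcal{A}\left( \varnothing \right) =0$, the zero object of $\mathbf{Pro}\left( \mathbf{Ab}\right) $. Under the induced forgetful functor $\mathbf{Pro}\left( \mathbf{Ab}\right) \rightarrow \mathbf{Pro}\left( \mathbf{Set}\right) $ this zero object is sent to the \emph{rudimentary} (Remark \ref{Rem-Rudimentary}) pro-object on a one-point set, whereas the initial object of $\mathbf{Pro}\left( \mathbf{Set}\right) $ is the rudimentary pro-object on $\varnothing $. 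Were $\mathcal{A}$ a cosheaf of pro-sets, Corollary \ref{Corr-Empty-value} would identify these two rudimentary objects; but the embedding $\mathbf{Set}\hookrightarrow \mathbf{Pro}\left( \mathbf{Set}\right) $ is full and faithful (Remark \ref{Rem-Rudimentary}), so such an isomorphism would descend to an isomorphism $\left\{ \ast \right\} \cong \varnothing $ in $\mathbf{Set}$, which is absurd.

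The argument is essentially immediate once the empty-set values are identified; the only point demanding care --- and the step I would flag as the genuine (if minor) obstacle --- is the bookkeeping of what ``the same precosheaf with values in the larger category'' means, namely that one applies the relevant forgetful functor and then re-reads the initial object in the \emph{target} category via Corollary \ref{Corr-Empty-value}. In the pro-case this is precisely what makes full faithfulness of the rudimentary embedding the crucial (and only nontrivial) ingredient.
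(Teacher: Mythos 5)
Your proof is correct and follows exactly the route the paper intends: the corollary is stated without proof as an immediate consequence of Corollary \ref{Corr-Empty-value}, and your argument simply spells out that the initial object $0$ of $\mathbf{Ab}$ (respectively of $\mathbf{Pro}\left(\mathbf{Ab}\right)$) has a one-point underlying set, whereas a cosheaf of sets (or pro-sets) must send $\varnothing$ to the empty set (or the rudimentary empty pro-set). Your care with the forgetful functors and the full faithfulness of the rudimentary embedding is sound but not essential; the comparison of the two initial objects is the whole content.
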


\begin{definition}
\label{constant}\label{constant-AB}Let $G\in \mathbf{K}$. We denote by the
same letter $G$ the following \textbf{constant} precosheaf on $X$ with
values in $\mathbf{K}$ or $\mathbf{Pro}\left( \mathbf{K}\right) $: $G\left(
U\right) 
{:=}%
G$ for all open subsets $U$.
\end{definition}

To introduce local isomorphisms, one needs the notion of a \emph{costalk},
which is dual to the notion of a stalk (Definition \ref{Def-Stalk}) in sheaf
theory.

\begin{definition}
\label{Costalk}\label{Def-Costalk}Assume that a category $\mathbf{K}$ admits
cofiltered limits. Let $\mathcal{A}$ be a precosheaf with values in $\mathbf{%
K}$, and let $x\in X$. The \textbf{costalk} of $\mathcal{A}$ at $x$ is 
\begin{equation*}
\mathcal{A}^{x}%
{:=}%
\underleftarrow{\lim }_{U\in J\left( x\right) }\mathcal{A}\left( U\right)
\end{equation*}%
where $J\left( x\right) $ is the family of open neighborhoods of $x$.
\end{definition}

\begin{remark}
\label{Rem-Costalk-different-categories}In a situation when $K\subseteq L$
is a subcategory, and $\mathcal{A}\in \mathbf{pS}\left( X,\mathbf{K}\right) $
we will use notations $\left( \mathcal{A}\right) _{\mathbf{K}}^{x}$ and $%
\left( \mathcal{A}\right) _{\mathbf{L}}^{x}$ depending on whether the limit
is taken in the category $\mathbf{K}$ or in the category $\mathbf{L}$.
\end{remark}

\begin{example}
Let%
\begin{eqnarray*}
\mathbf{K} &\subseteq &\mathbf{Pro}\left( \mathbf{K}\right) \mathbf{,} \\
\mathcal{A} &\in &\mathbf{pCS}\left( X,\mathbf{K}\right) \subseteq \mathbf{%
pCS}\left( X,\mathbf{Pro}\left( \mathbf{K}\right) \right) .
\end{eqnarray*}%
Then $\left( \mathcal{A}\right) _{\mathbf{K}}^{x}$ is just the limit%
\begin{equation*}
\left( \mathcal{A}\right) _{\mathbf{K}}^{x}=\underleftarrow{\lim }_{U\in
J\left( x\right) }\mathcal{A}\left( U\right) ,
\end{equation*}%
while $\left( \mathcal{A}\right) _{\mathbf{Pro}\left( \mathbf{K}\right)
}^{x} $ is the pro-object represented by the cofiltered diagram%
\begin{equation*}
\left( \mathcal{A}\right) _{\mathbf{Pro}\left( \mathbf{K}\right)
}^{x}=\left( \mathcal{A}\left( U\right) \right) _{U\in J\left( x\right) }.
\end{equation*}
\end{example}

\begin{definition}
\label{Def-Local-isomorphism}Let $\mathbf{K}$ admit cofiltered limits, and
let $f:\mathcal{A\rightarrow B}$ be a morphism in the category of
precosheaves $\mathbf{pCS}\left( X,\mathbf{K}\right) $. We say that $f$ is a 
\textbf{local isomorphism} iff $f^{x}:\mathcal{A}^{x}\rightarrow \mathcal{B}%
^{x}$ is an isomorphism for any $x\in X$. In a situation when $\mathbf{K}%
\subseteq \mathbf{L}$, and%
\begin{equation*}
\mathcal{A},\mathcal{B}\in \mathbf{pCS}\left( X,\mathbf{K}\right) \subseteq 
\mathbf{pCS}\left( X,\mathbf{L}\right) ,
\end{equation*}%
we will say that $f$ is $\mathbf{K}$-local (respectively $\mathbf{L}$-local)
isomorphism iff $\left( f\right) _{\mathbf{K}}^{x}:\left( \mathcal{A}\right)
_{\mathbf{K}}^{x}\rightarrow \left( \mathcal{B}\right) _{\mathbf{K}}^{x}$
(respectively $\left( f\right) _{\mathbf{L}}^{x}:\left( \mathcal{A}\right) _{%
\mathbf{L}}^{x}\rightarrow \left( \mathcal{B}\right) _{\mathbf{L}}^{x}$) is
an isomorphism for any $x\in X$.
\end{definition}

\begin{proposition}
\label{Prop-Cosheafification-is-local-iso}Let $\mathbf{K}$ be a cocomplete
category admitting cofiltered limits. Assume that $\mathbf{CS}\left( X,%
\mathbf{K}\right) \subseteq \mathbf{pCS}\left( X,\mathbf{K}\right) $ is
coreflective, and the coreflection is given by the functor%
\begin{equation*}
\left( {}\right) _{\#}:\mathbf{pCS}\left( X,\mathbf{K}\right)
\longrightarrow \mathbf{CS}\left( X,\mathbf{K}\right) .
\end{equation*}%
Then for any precosheaf $\mathcal{A}$, the natural morphism $\mathcal{A}%
_{\#}\rightarrow \mathcal{A}$ is a local isomorphism.
\end{proposition}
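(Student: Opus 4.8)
The plan is to dualize the proof of Proposition~\ref{Prop-Sheafification-is-local-iso} essentially verbatim, using the duality between precosheaves with values in $\mathbf{K}$ and presheaves with values in $\mathbf{K}^{op}$ (under which cosheaves correspond to sheaves and the costalk of Definition~\ref{Def-Costalk} corresponds to the stalk of Definition~\ref{Def-Stalk}). Fix $x\in X$; the goal is to show that the counit $\mathcal{A}_{\#}\rightarrow \mathcal{A}$ induces an isomorphism on costalks, $\left( \mathcal{A}_{\#}\right) ^{x}\rightarrow \mathcal{A}^{x}$, for every $x$.

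First I would introduce the \emph{copointed} precosheaf $\mathcal{P}^{x,G}$ dual to the pointed sheaf $\mathcal{P}_{x,G}$ used before: for $G\in \mathbf{K}$, set $\mathcal{P}^{x,G}(U)=G$ when $x\in U$ and $\mathcal{P}^{x,G}(U)=$ an initial object of $\mathbf{K}$ when $x\notin U$, with all structure maps between neighborhoods of $x$ equal to $\mathbf{1}_{G}$ and all maps out of the initial object the unique ones. A morphism $\mathcal{P}^{x,G}\rightarrow \mathcal{C}$ of precosheaves is then precisely a cone from $G$ over the diagram $\left( \mathcal{C}\left( U\right) \right) _{U\in J\left( x\right) }$, so that, naturally in $G$ and $\mathcal{C}$,
\begin{equation*}
Hom_{\mathbf{pCS}\left( X,\mathbf{K}\right) }\left( \mathcal{P}^{x,G},\mathcal{C}\right) \simeq \underleftarrow{\lim }_{U\in J\left( x\right) }Hom_{\mathbf{K}}\left( G,\mathcal{C}\left( U\right) \right) \simeq Hom_{\mathbf{K}}\left( G,\mathcal{C}^{x}\right) .
\end{equation*}

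Next I would verify that $\mathcal{P}^{x,G}$ is a \textbf{cosheaf}. Using the cover description of the cosheaf condition (Proposition~\ref{Prop-Pretopology-Cosheaves}), for a cover $\left\{ U_{i}\subseteq U\right\} $ the relevant cokernel only sees those $U_{i}$ (and intersections $U_{i}\times _{U}U_{j}$) that contain $x$, on which $\mathcal{P}^{x,G}$ is constantly $G$ with identity transition maps; since any two such members meet in a set still containing $x$, all the copies of $G$ are identified and the cokernel is $G=\mathcal{P}^{x,G}\left( U\right) $ when $x\in U$, while everything is initial when $x\notin U$. Equivalently, $\mathcal{P}^{x,G}$ is the image of the sheaf $\mathcal{P}_{x,G}$ (with values in $\mathbf{K}^{op}$) under the $\mathbf{K}\leftrightarrow \mathbf{K}^{op}$ duality, so no new computation is really needed.

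With these two facts the conclusion is formal. Because $\mathcal{P}^{x,G}$ is a cosheaf and $\left( {}\right) _{\#}$ is right adjoint to the full inclusion $\mathbf{CS}\left( X,\mathbf{K}\right) \hookrightarrow \mathbf{pCS}\left( X,\mathbf{K}\right) $, the counit induces
\begin{equation*}
Hom_{\mathbf{K}}\left( G,\mathcal{A}^{x}\right) \simeq Hom_{\mathbf{pCS}}\left( \mathcal{P}^{x,G},\mathcal{A}\right) \simeq Hom_{\mathbf{CS}}\left( \mathcal{P}^{x,G},\mathcal{A}_{\#}\right) \simeq Hom_{\mathbf{K}}\left( G,\left( \mathcal{A}_{\#}\right) ^{x}\right)
\end{equation*}
naturally in $G\in \mathbf{K}$, whence $\mathcal{A}^{x}\simeq \left( \mathcal{A}_{\#}\right) ^{x}$ by Yoneda's lemma, the isomorphism being the one induced by $\mathcal{A}_{\#}\rightarrow \mathcal{A}$. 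I expect the only non-formal point to be the cosheaf verification for $\mathcal{P}^{x,G}$, namely that the cokernel over a cover collapses the copies of $G$ indexed by the members meeting $x$ down to a single $G$; the remainder is the dual bookkeeping of the sheaf case together with the adjunction and Yoneda.
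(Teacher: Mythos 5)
Your proposal is correct and follows essentially the same route as the paper's own proof: the paper also introduces the precosheaf $\mathcal{P}_{x,G}$ (equal to $G$ on neighborhoods of $x$ and initial elsewhere), notes it is a cosheaf with $Hom_{\mathbf{pCS}\left( X,\mathbf{K}\right) }\left( \mathcal{P}_{x,G},\mathcal{C}\right) \simeq Hom_{\mathbf{K}}\left( G,\mathcal{C}^{x}\right) $, and concludes via the coreflection adjunction and Yoneda. Your explicit cokernel verification of the cosheaf property fills in a step the paper dismisses as easy, but the argument is otherwise identical.
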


\begin{proof}
Let $x\in X$, and $G\in \mathbf{K}$. Denote by $\mathcal{P}_{x,G}$ the
following \textbf{pointed} precosheaf: $\mathcal{P}_{x,G}\left( U\right) $
is an initial object $J$ when $x\not\in U$, and $\mathcal{P}_{x,G}\left(
U\right) =G$ when $x\in U$. It is easy to check that $\mathcal{P}_{x,G}$ is
in fact a \textbf{cosheaf}, and that for any precosheaf $\mathcal{C}$,%
\begin{equation*}
Hom_{\mathbf{pCS}\left( X,\mathbf{K}\right) }\left( \mathcal{P}_{x,G},%
\mathcal{C}\right) 
\simeq%
\underleftarrow{\lim }_{U\in J\left( x\right) }Hom_{\mathbf{K}}\left( G,%
\mathcal{C}\left( U\right) \right) 
\simeq%
Hom_{\mathbf{K}}\left( G,\mathcal{C}^{x}\right) ,
\end{equation*}%
naturally on $G$ and $\mathcal{C}$. Using the adjointness isomorphism, one
gets%
\begin{equation*}
Hom_{\mathbf{K}}\left( G,\mathcal{A}^{x}\right) 
\simeq%
Hom_{\mathbf{pCS}\left( X,\mathbf{K}\right) }\left( \mathcal{P}_{x,G},%
\mathcal{A}\right) 
\simeq%
Hom_{\mathbf{CS}\left( X,\mathbf{K}\right) }\left( \mathcal{P}_{x,G},%
\mathcal{A}_{\#}\right) 
\simeq%
Hom_{\mathbf{K}}\left( G,\left( \mathcal{A}_{\#}\right) ^{x}\right) ,
\end{equation*}%
for any $G\in \mathbf{K}$. Therefore, $\mathcal{A}^{x}%
\simeq%
\left( \mathcal{A}_{\#}\right) ^{x}$, as desired.
\end{proof}

\begin{example}
Let $\mathcal{A}$ is a precosheaf of abelian groups on $X$. According to 
\cite{Bredon-MR0226631}, Section 2, or \cite{Bredon-Book-MR1481706},
Definition V.12.1, $\mathcal{A}$ is called \emph{locally zero} iff for any $%
x\in X$ and any open neighborhood $U$ of $x$ there exists another open
neighborhood $V$, $x\in V\subseteq U$, such that $\mathcal{A}\left( V\right)
\rightarrow \mathcal{A}\left( U\right) $ is zero. If we consider, however,
the precosheaf $\mathcal{A}$ as a precosheaf of abelian pro-groups, then $%
\mathcal{A}$ is locally zero iff for any $x\in X$, $\mathcal{A}^{x}$ is the
zero object in the category $\mathbf{Pro}\left( \mathbf{Ab}\right) $.
\end{example}

\begin{definition}
A precosheaf $\mathcal{A}$ of abelian (pro-)groups on $X$ is called \textbf{%
locally zero} if $\left( \mathcal{A}\right) _{\mathbf{Pro}\left( \mathbf{Ab}%
\right) }^{x}=0$ for any $x\in X$.
\end{definition}

\begin{definition}
\label{local-isomorphism}\label{Def-Local-isomorphism-Bredon}Let $\mathcal{%
A\rightarrow B}$ be a morphism of precosheaves (with values in $\mathbf{K}$
or $\mathbf{Pro}\left( \mathbf{K}\right) $) on $X$. It is called a \textbf{%
local isomorphism in the sense of Bredon} (shorty: \textbf{strong local
isomorphism}) iff $\left( \mathcal{A}\right) _{\mathbf{Pro}\left( \mathbf{K}%
\right) }^{x}\rightarrow \left( \mathcal{B}\right) _{\mathbf{Pro}\left( 
\mathbf{K}\right) }^{x}$ is an isomorphism for any $x\in X$.
\end{definition}

\begin{remark}
Strong local isomorphisms are local isomorphisms. Indeed, it follows from 
\cite[dual to Proposition 6.3.1]{Kashiwara-Categories-MR2182076}, that if $%
\left( A_{i}\right) _{i\in \mathbf{I}}\rightarrow \left( B_{j}\right) _{J\in 
\mathbf{J}}$ is an isomorphism in $\mathbf{Pro}\left( \mathbf{K}\right) $,
then 
\begin{equation*}
\underleftarrow{\lim }_{i\in \mathbf{I}}A_{i}\longrightarrow \underleftarrow{%
\lim }_{j\in \mathbf{J}}B_{j}
\end{equation*}%
is an isomorphism in $\mathbf{K}$.
\end{remark}

\begin{proposition}
\label{Local-isomorphism-Bredon}Let $f:\mathcal{A\rightarrow B}$ be a
morphism of precosheaves on $X$ with values in $\mathbf{Ab}$ or $\mathbf{Pro}%
\left( \mathbf{Ab}\right) $. Then $f$ is a strong local isomorphism iff both 
$\ker \left( f\right) $ and $coker\left( f\right) $ are locally zero.
\end{proposition}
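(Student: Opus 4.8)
The plan is to deduce everything from the single fact that the costalk functor $\left(\,\cdot\,\right)^{x}_{\mathbf{Pro}\left(\mathbf{Ab}\right)}:\mathbf{pCS}\left(X,\mathbf{Pro}\left(\mathbf{Ab}\right)\right)\longrightarrow\mathbf{Pro}\left(\mathbf{Ab}\right)$ is \emph{exact}. First recall that $\mathbf{Pro}\left(\mathbf{Ab}\right)$ is an abelian category, so a morphism in it is an isomorphism if and only if both its kernel and its cokernel vanish. Since both $\mathbf{Ab}$ and $\mathbf{Pro}\left(\mathbf{Ab}\right)$ are abelian, the precosheaf categories $\mathbf{pCS}\left(X,\mathbf{Ab}\right)$ and $\mathbf{pCS}\left(X,\mathbf{Pro}\left(\mathbf{Ab}\right)\right)$ are abelian as well, with kernels and cokernels computed objectwise (i.e. $\ker\left(f\right)\left(U\right)=\ker\left(f\left(U\right)\right)$, and similarly for $coker$). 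Using the full exact embedding $\mathbf{Ab}\subseteq\mathbf{Pro}\left(\mathbf{Ab}\right)$ (Remark \ref{Rem-Rudimentary}) I treat the $\mathbf{Ab}$-valued case as a special case of the $\mathbf{Pro}\left(\mathbf{Ab}\right)$-valued one, so that in either situation the costalk is taken in $\mathbf{Pro}\left(\mathbf{Ab}\right)$, as required by Definitions \ref{Def-Costalk} and \ref{Def-Local-isomorphism-Bredon}.

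To prove exactness of the costalk, I would factor $\left(\,\cdot\,\right)^{x}$ as the restriction of a precosheaf to the codirected family $J\left(x\right)$ of open neighborhoods of $x$, followed by the cofiltered limit $\underleftarrow{\lim}_{U\in J\left(x\right)}$ taken in $\mathbf{Pro}\left(\mathbf{Ab}\right)$. Restriction along $J\left(x\right)\hookrightarrow\mathbf{C}_{X}$ is exact because (co)kernels in both functor categories are objectwise. The second factor is exact because cofiltered limits are exact in $\mathbf{Pro}\left(\mathbf{Ab}\right)$ --- the key advantage of passing to the pro-category, dual to the exactness of filtered colimits in $\mathbf{Ind}\left(\mathbf{Ab}\right)$ \cite{Kashiwara-Categories-MR2182076}. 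Composing, $\left(\,\cdot\,\right)^{x}$ is exact, so it commutes with kernels and cokernels; in particular $\left(\ker f\right)^{x}=\ker\left(f^{x}\right)$ and $\left(coker\,f\right)^{x}=coker\left(f^{x}\right)$, where $f^{x}$ denotes the induced morphism $\left(\mathcal{A}\right)^{x}_{\mathbf{Pro}\left(\mathbf{Ab}\right)}\to\left(\mathcal{B}\right)^{x}_{\mathbf{Pro}\left(\mathbf{Ab}\right)}$.

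With this in hand the equivalence is immediate. By Definition \ref{Def-Local-isomorphism-Bredon}, $f$ is a strong local isomorphism iff $f^{x}$ is an isomorphism in $\mathbf{Pro}\left(\mathbf{Ab}\right)$ for every $x\in X$; since $\mathbf{Pro}\left(\mathbf{Ab}\right)$ is abelian this holds iff $\ker\left(f^{x}\right)=0$ and $coker\left(f^{x}\right)=0$ for every $x$, i.e. iff $\left(\ker f\right)^{x}=0$ and $\left(coker\,f\right)^{x}=0$ for every $x$, which is exactly the assertion that $\ker\left(f\right)$ and $coker\left(f\right)$ are locally zero. I expect the only nontrivial point to be the exactness of the cofiltered limit over $J\left(x\right)$ in $\mathbf{Pro}\left(\mathbf{Ab}\right)$: inverse limits are notoriously inexact in $\mathbf{Ab}$ itself (the source of $\lim^{1}$), and it is precisely the passage to $\mathbf{Pro}\left(\mathbf{Ab}\right)$ that repairs this. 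Everything else is a formal manipulation inside abelian categories.
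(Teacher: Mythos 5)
Your argument is correct and is essentially the paper's own proof: both rest on exactly the two facts that cofiltered limits are exact in $\mathbf{Pro}\left( \mathbf{Ab}\right) $ (so the costalk functor commutes with $\ker $ and $coker$) and that $\mathbf{Pro}\left( \mathbf{Ab}\right) $ is abelian (so $f^{x}$ is an isomorphism iff its kernel and cokernel vanish). Your extra remarks --- the objectwise computation of (co)kernels in the precosheaf category and the reduction of the $\mathbf{Ab}$-valued case to the $\mathbf{Pro}\left( \mathbf{Ab}\right) $-valued one via the exact full embedding --- merely spell out steps the paper leaves implicit.
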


\begin{proof}
Since cofiltered limits are exact in $\mathbf{Pro}\left( \mathbf{Ab}\right) $
\cite[dual to Proposition 6.1.19]{Kashiwara-Categories-MR2182076}, the
sequence%
\begin{equation*}
\left( \ker \left( f\right) \right) _{\mathbf{Pro}\left( \mathbf{Ab}\right)
}^{x}\longrightarrow \left( \mathcal{A}\right) _{\mathbf{Pro}\left( \mathbf{%
Ab}\right) }^{x}\overset{f^{x}}{\longrightarrow }\left( \mathcal{B}\right) _{%
\mathbf{Pro}\left( \mathbf{Ab}\right) }^{x}\longrightarrow \left(
coker\left( f\right) \right) _{\mathbf{Pro}\left( \mathbf{Ab}\right) }^{x}
\end{equation*}%
is exact. Since $\mathbf{Pro}\left( \mathbf{Ab}\right) $ is an abelian
category \cite[Chapter 8.6]{Kashiwara-Categories-MR2182076}, $f^{x}$ is an
isomorphism iff both $\left( \ker \left( f\right) \right) _{\mathbf{Pro}%
\left( \mathbf{Ab}\right) }^{x}$ and $\left( coker\left( f\right) \right) _{%
\mathbf{Pro}\left( \mathbf{Ab}\right) }^{x}$ are zero.
\end{proof}

\begin{remark}
It follows from Proposition \ref{Local-isomorphism-Bredon} that a morphism $%
f:\mathcal{A}\rightarrow \mathcal{B}$ of precosheaves of abelian groups is a
local isomorphism in the sense of \cite[Section 3]{Bredon-MR0226631}, or 
\cite[Definition V.12.2]{Bredon-Book-MR1481706}, iff it is a strong local
isomorphism in our sense.
\end{remark}

\begin{proposition}
\label{Prop-Strong-local-equivalence}Let $\mathbf{K}$ be cocomplete, and let%
\begin{equation*}
\left( f:\mathcal{A}\longrightarrow \mathcal{B}\right) \in Hom_{\mathbf{pCS}%
\left( X,\mathbf{Pro}\left( \mathbf{K}\right) \right) }\left( \mathcal{A},%
\mathcal{B}\right) .
\end{equation*}%
Then $f$ is a strong local equivalence iff 
\begin{equation*}
Hom_{\mathbf{Pro}\left( \mathbf{K}\right) }\left( \mathcal{B},G\right)
\longrightarrow Hom_{\mathbf{Pro}\left( \mathbf{K}\right) }\left( \mathcal{A}%
,G\right)
\end{equation*}%
is a local equivalence of $\mathbf{Set}$-valued presheaves for all $G\in 
\mathbf{K}$.
\end{proposition}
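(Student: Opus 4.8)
The plan is to reduce everything to a computation of stalks. For $G\in\mathbf{K}$ write $\mathcal{F}_{\mathcal{A},G}$ for the presheaf of sets $U\mapsto Hom_{\mathbf{Pro}(\mathbf{K})}(\mathcal{A}(U),G)$, and similarly $\mathcal{F}_{\mathcal{B},G}$; the morphism appearing in the statement is then $\mathcal{F}_{\mathcal{B},G}\to\mathcal{F}_{\mathcal{A},G}$. The crucial first step is to establish, for every $x\in X$ and every $G\in\mathbf{K}$, a natural identification
\begin{equation*}
Hom_{\mathbf{Pro}\left( \mathbf{K}\right) }\left( \left( \mathcal{A}\right) _{\mathbf{Pro}\left( \mathbf{K}\right) }^{x},G\right) \simeq \left( \mathcal{F}_{\mathcal{A},G}\right) _{x}.
\end{equation*}
Indeed, the family $J(x)$ of open neighbourhoods of $x$ is codirected (it is closed under finite intersections), so the costalk $\left( \mathcal{A}\right) _{\mathbf{Pro}\left( \mathbf{K}\right) }^{x}=\underleftarrow{\lim }_{U\in J(x)}\mathcal{A}(U)$ (Definition \ref{Def-Costalk}) is a genuine cofiltered limit in $\mathbf{Pro}(\mathbf{K})$. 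As recalled in the proof of Proposition \ref{Prop-Hom(Pro(K),G)}, the functor $Hom_{\mathbf{Pro}(\mathbf{K})}(\_,G):\mathbf{Pro}(\mathbf{K})\to\mathbf{Set}^{op}$ carries cofiltered limits to filtered colimits, so the left-hand side is $\underrightarrow{\lim }_{U\in J(x)}Hom_{\mathbf{Pro}(\mathbf{K})}(\mathcal{A}(U),G)$, which is precisely the stalk of $\mathcal{F}_{\mathcal{A},G}$ at $x$ (Definition \ref{Def-Stalk}).

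Granting this identification, the forward implication is immediate. If $f$ is a strong local isomorphism (Definition \ref{Def-Local-isomorphism-Bredon}), then $f^{x}:\left( \mathcal{A}\right) _{\mathbf{Pro}\left( \mathbf{K}\right) }^{x}\to\left( \mathcal{B}\right) _{\mathbf{Pro}\left( \mathbf{K}\right) }^{x}$ is an isomorphism in $\mathbf{Pro}(\mathbf{K})$ for each $x$; applying $Hom_{\mathbf{Pro}(\mathbf{K})}(\_,G)$ and the displayed identification shows that each stalk map $\left( \mathcal{F}_{\mathcal{B},G}\right) _{x}\to\left( \mathcal{F}_{\mathcal{A},G}\right) _{x}$ is a bijection, i.e. $\mathcal{F}_{\mathcal{B},G}\to\mathcal{F}_{\mathcal{A},G}$ is a local isomorphism (Definition \ref{Def-Local-isomorphism-presheaves}) for every $G\in\mathbf{K}$.

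The substance lies in the converse, which is where I expect the real work. Assume $\mathcal{F}_{\mathcal{B},G}\to\mathcal{F}_{\mathcal{A},G}$ is a local isomorphism for every $G\in\mathbf{K}$. Taking stalks at an arbitrary $x$ and invoking the identification once more, the induced map $Hom_{\mathbf{Pro}(\mathbf{K})}(\left( \mathcal{B}\right) _{\mathbf{Pro}\left( \mathbf{K}\right) }^{x},G)\to Hom_{\mathbf{Pro}(\mathbf{K})}(\left( \mathcal{A}\right) _{\mathbf{Pro}\left( \mathbf{K}\right) }^{x},G)$ is a bijection for every $G\in\mathbf{K}$. Now recall (Definition \ref{Def-Pro-C}) that $\mathbf{Pro}(\mathbf{K})^{op}$ is a \emph{full} subcategory of $\mathbf{Set}^{\mathbf{K}}$, via the assignment $\mathcal{X}\mapsto Hom_{\mathbf{Pro}(\mathbf{K})}(\mathcal{X},\_)$ restricted to $\mathbf{K}$; being fully faithful, this embedding reflects isomorphisms. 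Since the natural transformation it associates to $f^{x}$ is componentwise bijective, it is an isomorphism in $\mathbf{Set}^{\mathbf{K}}$ between objects of the full subcategory, whence $f^{x}:\left( \mathcal{A}\right) _{\mathbf{Pro}\left( \mathbf{K}\right) }^{x}\to\left( \mathcal{B}\right) _{\mathbf{Pro}\left( \mathbf{K}\right) }^{x}$ is itself an isomorphism in $\mathbf{Pro}(\mathbf{K})$ for every $x$. This is exactly the assertion that $f$ is a strong local isomorphism. This final reflection step is the same device used to close the proofs of Lemma \ref{Lemma-Epi-in-Pro(K)} and Theorem \ref{Th-Properties-of-Plus-Cosheaves}.

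The only delicate points are verifying that the costalk is genuinely a cofiltered limit, so that $Hom_{\mathbf{Pro}(\mathbf{K})}(\_,G)$ commutes with it, and that the passage from componentwise bijectivity to an honest isomorphism of pro-objects is legitimate. Both are handled entirely by the pro-category formalism cited above, so no new estimate is required; the main obstacle is conceptual bookkeeping of variances (the contravariance of $Hom(\_,G)$ turning the codirected neighbourhood system into the directed colimit defining a stalk) rather than any hard analysis.
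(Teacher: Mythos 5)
Your proof is correct and follows essentially the same route as the paper, whose entire argument is the one-line observation that $Hom_{\mathbf{Pro}\left( \mathbf{K}\right) }\left( \_,G\right) $ converts cofiltered limits into filtered colimits, so that costalks of $\mathcal{A}$ become stalks of $Hom_{\mathbf{Pro}\left( \mathbf{K}\right) }\left( \mathcal{A},G\right) $; your write-up simply makes explicit the identification of stalks and the final step of reflecting isomorphisms along the full embedding $\mathbf{Pro}\left( \mathbf{K}\right) ^{op}\subseteq \mathbf{Set}^{\mathbf{K}}$, a device the paper itself deploys in Lemma \ref{Lemma-Epi-in-Pro(K)} and Theorem \ref{Th-Properties-of-Plus-Cosheaves}.
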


\begin{proof}
\begin{equation*}
Hom_{\mathbf{Pro}\left( \mathbf{K}\right) }\left( \_,G\right) :\mathbf{Pro}%
\left( \mathbf{K}\right) \longrightarrow \mathbf{Set}
\end{equation*}%
converts cofiltered limits into filtered colimits \cite[dual to Corollary
6.1.17]{Kashiwara-Categories-MR2182076}.
\end{proof}

\section{Examples}

Below is a series of examples of various (pre)cosheaves with values anywhere.

\subsection{Cosheaves}

\begin{example}
\label{Singular-precosheaf}Let $A$ be an abelian group, and let $\Sigma
_{n}\left( \_,A\right) $ be a precosheaf that assigns to $U$ the colimit of
the following sequence:%
\begin{equation*}
S_{n}\left( U,A\right) \overset{\mathbf{ba}}{\longrightarrow }S_{n}\left(
U,A\right) \overset{\mathbf{ba}}{\longrightarrow }S_{n}\left( U,A\right) 
\overset{\mathbf{ba}}{\longrightarrow }...
\end{equation*}%
where $S_{n}\left( U,A\right) $ is the group of singular $A$-valued $n$%
-chains on $U$, and $\mathbf{ba}$ is the barycentric subdivision. It is
proved in \cite{Bredon-MR0226631}, Section 10, and \cite%
{Bredon-Book-MR1481706}, Proposition VI.12.1, that $\Sigma _{n}\left(
\_,A\right) $ is a cosheaf of abelian groups (and of abelian pro-groups, due
to Theorem \ref{Our-cosheaves-vs-Bredon}).
\end{example}

\begin{example}
\label{p0-is-cosheaf}Let $\pi _{0}$ be a precosheaf of sets that assigns to $%
U$ the set $\pi _{0}\left( U\right) $ of path-connected components of $U$.
Then $\pi _{0}$ is a cosheaf of sets (and of pro-sets, due to Theorem \ref%
{Our-cosheaves-vs-Bredon}). This cosheaf is constant if $X$ is locally
path-connected, and is not constant in general. Indeed, $\pi _{0}$ is
clearly coseparated. Let $\left\{ U_{i}\rightarrow U\right\} _{i\in I}$ be
an open covering, and let $P\in U_{s}$ and $Q\in U_{t}$ be two points lying
in the same path-connected component. Therefore, there exists a continuous
path $g:\left[ 0,1\right] \longrightarrow U$ with $g\left( 0\right) =P$ and $%
g\left( 1\right) =Q$. Using Lebesgue's Number Lemma, one proves that $P$ and 
$Q$ define equal elements of the cokernel below. Therefore, the mapping%
\begin{equation*}
coker\left( \dcoprod\limits_{i,j}\pi _{0}\left( U_{i}\cap U_{j}\right)
\rightrightarrows \dcoprod\limits_{i}\pi _{0}\left( U_{i}\right) \right)
\longrightarrow \pi _{0}\left( U\right)
\end{equation*}%
is injective, thus bijective, and $\pi _{0}$ is a cosheaf.
\end{example}

\begin{example}
\label{H0-is-cosheaf-in-AB}Let $A$ be an abelian group, and let $%
H_{0}^{S}\left( \_,A\right) $ be the precosheaf of abelian groups that
assigns to $U$ the zeroth singular homology group $H_{0}^{S}\left(
X,A\right) $. Then $H_{0}^{S}\left( \_,A\right) $ is a cosheaf. Indeed,%
\begin{equation*}
H_{0}^{S}=coker\left( \Sigma _{1}\left( \_,A\right) \longrightarrow \Sigma
_{0}\left( \_,A\right) \right)
\end{equation*}%
where $\Sigma _{n}\left( \_,A\right) $ is the cosheaf from Example \ref%
{Singular-precosheaf}. The embedding%
\begin{equation*}
\mathbf{CS}\left( X,\mathbf{Pro}\left( \mathbf{Ab}\right) \right)
\longrightarrow \mathbf{pCS}\left( X,\mathbf{Pro}\left( \mathbf{Ab}\right)
\right) ,
\end{equation*}%
being left adjoint to $\left( {}\right) _{\#}$, commutes with colimits.
Therefore, $H_{0}^{S}\left( \_,A\right) $ is a cosheaf because $\Sigma
_{1}\left( \_,A\right) $ and $\Sigma _{0}\left( \_,A\right) $ are cosheaves. 
$H_{0}^{S}\left( \_,A\right) $ is constant if $X$ is locally path-connected.
However, $H_{0}^{S}\left( \_,A\right) $ is not constant in general, see
Example \ref{Converging-sequence}.
\end{example}

\begin{example}
\label{Ex-Fundamental-groupoid}Let $\mathbf{Gpd}$ be the category of small
groupoids. Consider the following precosheaf $\Pi _{1}\in \mathbf{pCS}\left(
X,\mathbf{Gpd}\right) $: for an open subset $U\subseteq X$ let $\Pi
_{1}\left( U\right) $ be the fundamental groupoid of $U$. Then, due to the
main theorem in \cite%
{Brown-Salleh-1984-A-van-Kampen-theorem-for-unions-on-nonconnected-spaces-MR751476}%
, for any open covering $\left( U_{i}\right) _{i\in I}$ of $U$, the morphism%
\begin{equation*}
coker\left( \dcoprod\limits_{i,j\in I}\Pi _{1}\left( U_{i}\cap U_{j}\right)
\rightrightarrows \dcoprod\limits_{i\in I}\Pi _{1}\left( U_{i}\right)
\right) \longrightarrow \Pi _{1}\left( U\right)
\end{equation*}%
is an isomorphism of groupoids. Therefore, $\Pi _{1}$ is a cosheaf of
groupoids.
\end{example}

\subsection{Precosheaves}

\begin{example}
\label{Non-smooth-precosheaf}Let $X$ be the closed interval $\left[ 0,1%
\right] $, and let $\mathcal{A}$ assign to $U$ the group $S_{1}\left( U,%
\mathbb{Z}\right) $ of singular $1$-chains on $U$. It is proved in \cite[%
Remark 5.9]{Bredon-MR0226631}, and \cite[Example VI.5.9]%
{Bredon-Book-MR1481706}, that this precosheaf of abelian groups is not
smooth.
\end{example}

\begin{example}
\label{Ex-Non-smooth-precosheaf}Fix $n\geq 1$. Let again $X=I=\left[ 0,1%
\right] $, and let $\mathcal{A}$ assign to $U$ the \textbf{set} $%
Simp_{n}\left( U\right) $ of singular $n$-\textbf{simplices} on $U$, i.e.%
\begin{equation*}
\mathcal{A}\left( U\right) 
{:=}%
Simp_{n}\left( U\right) =U^{\Delta ^{n}}=Hom_{\mathbf{Top}}\left( \Delta
^{n},U\right) .
\end{equation*}%
Then $\mathcal{A}$ is not smooth as a precosheaf of sets. Indeed, let $%
\mathcal{B}=\left( \mathcal{A}\right) _{+}^{\mathbf{Pro}\left( \mathbf{Set}%
\right) }$. For an open $U\subseteq X$,%
\begin{equation*}
\mathcal{B}\left( U\right) =\left( B_{\left\{ U_{i}\right\} }\right)
_{\left\{ U_{i}\right\} }
\end{equation*}%
where $\left\{ U_{i}\right\} $ runs over open covers of $U$, and%
\begin{equation*}
B_{\left\{ U_{i}\right\} }=\left\{ \sigma :\Delta ^{n}\longrightarrow
U~|~\exists i\left( \sigma \left( \Delta ^{n}\right) \subseteq U_{i}\right)
\right\} .
\end{equation*}%
It can be checked that:

\begin{enumerate}
\item $\mathcal{B}$ is a cosheaf of pro-sets.

\item For any $U\neq \varnothing $, the pro-object $\mathcal{B}\left(
U\right) $ is \textbf{not} rudimentary (Remark \ref{Rem-Rudimentary}).
\end{enumerate}

It follows that%
\begin{equation*}
\left( \mathcal{A}\right) _{\#}^{\mathbf{Pro}\left( \mathbf{Set}\right) }%
\simeq%
\left( \mathcal{A}\right) _{++}^{\mathbf{Pro}\left( \mathbf{Set}\right) }%
\simeq%
\left( \mathcal{A}\right) _{+}^{\mathbf{Pro}\left( \mathbf{Set}\right) }%
\simeq%
\mathcal{B},
\end{equation*}%
and this cosheaf does not take values in $\mathbf{Set}$. Therefore, due to
Theorem \ref{Our-cosheaves-vs-Bredon}, $\mathcal{A}$ is \textbf{not smooth}.
However, since $\mathbf{Set}$ is locally presentable (even locally finitely
presentable), there exists, due to Theorem \ref{Th-Main}(\ref%
{Th-Main-K-locally-presentable}), a cosheafification%
\begin{equation*}
\left( {}\right) _{\#}^{\mathbf{Set}}:\mathbf{pCS}\left( X,\mathbf{Set}%
\right) \longrightarrow \mathbf{CS}\left( X,\mathbf{Set}\right) .
\end{equation*}%
It can be checked that $\left( \mathcal{A}\right) _{\#}^{\mathbf{Set}}$ is 
\textbf{rather trivial}: $\left( \mathcal{A}\right) _{\#}^{\mathbf{Set}%
}\left( U\right) =U$, i.e. the result is as if our space $X$ were a \textbf{%
discrete} space. The natural morphism $\left( \mathcal{A}\right) _{\#}^{%
\mathbf{Set}}\rightarrow \mathcal{A}$ sends any point%
\begin{equation*}
a\in \left( \mathcal{A}\right) _{\#}^{\mathbf{Set}}\left( U\right) =U
\end{equation*}%
to the \textbf{constant} ($\sigma \left( t\right) \equiv a$) singular
simplex 
\begin{equation*}
\sigma \in Simp_{n}\left( U\right) =\mathcal{A}\left( U\right) .
\end{equation*}%
Let us calculate the costalks:%
\begin{eqnarray*}
\left( \mathcal{A}\right) _{\mathbf{Set}}^{x} &=&\dbigcap\limits_{U\in
J\left( x\right) }Simp_{n}\left( U\right) =\mathbf{pt}, \\
\left( \left( \mathcal{A}\right) _{\#}^{\mathbf{Set}}\right) _{\mathbf{Set}%
}^{x} &=&\dbigcap\limits_{U\in J\left( x\right) }U=\mathbf{pt},
\end{eqnarray*}%
while%
\begin{equation*}
\left( \mathcal{A}\right) _{\mathbf{Pro}\left( \mathbf{Set}\right) }^{x}%
\simeq%
\left( \mathcal{B}\right) _{\mathbf{Pro}\left( \mathbf{Set}\right) }^{x}
\end{equation*}%
are non-rudimentary pro-sets. It is clear that $\left( \mathcal{A}\right)
_{\#}^{\mathbf{Set}}\rightarrow \mathcal{A}$ is a $\mathbf{Set}$-local
isomorphism, but \textbf{not} a strong local isomorphism (because $\mathcal{A%
}$ is not smooth).
\end{example}

\begin{example}
\label{p0-prime-not-cosheaf}Let $\pi $ be a precosheaf of sets that assigns
to $U$ the set $\pi \left( U\right) $ of connected components of $U$. This
precosheaf is coseparated. If $X$ is \textbf{locally connected}, then, for
any open subset $U\subseteq X$, the pro-homotopy set $pro$-$\pi _{0}\left(
U\right) $ is isomorphic to the \textbf{rudimentary} (Remark \ref%
{Rem-Rudimentary}) pro-set $\pi \left( U\right) $. It follows from Theorem %
\ref{Our-cosheaves-vs-Bredon}, that $\pi 
\simeq%
\left( \mathbf{pt}\right) _{\#}$ where $\mathbf{pt}$ is the one-point
constant precosheaf. Therefore, $\mathbf{pt}$ is smooth, and $\pi $ a
constant cosheaf (compare to \cite[Remark 5.11]{Bredon-MR0226631}).

In general, if $X$ is \textbf{not} locally connected, $\pi $ is \textbf{not
a cosheaf}. Indeed, let 
\begin{equation*}
X=Y\cup Z\subseteq \mathbb{R}^{2},
\end{equation*}%
where $Y$ is the line segment between the points $\left( 0,1\right) $ and $%
\left( 0,-1\right) $, and $Y$ is the graph of $y=\sin \left( \frac{1}{x}%
\right) $ for $0<x\leq 2\pi $. Let further%
\begin{eqnarray*}
X &=&U=U_{1}\cup U_{2}, \\
U_{1} &=&\left\{ \left( x,y\right) \in X~|~y>-\frac{1}{2}\right\} , \\
U_{2} &=&\left\{ \left( x,y\right) \in X~|~y<\frac{1}{2}\right\} .
\end{eqnarray*}%
$X$ is a connected (not locally connected!) compact metric space. Take $%
P=\left( 0,1\right) \in U_{1}$ and $Q=\left( \frac{3\pi }{2},-1\right) \in
U_{2}$. Since $U=X$ is connected, these two points are mapped to the same
point of $U$ under the canonical mapping $U_{1}\sqcup U_{2}\longrightarrow U$%
. However, these two points define \textbf{different} elements of the colimit%
\begin{equation*}
coker\left( \pi \left( U_{1}\cap U_{2}\right) \rightrightarrows \pi \left(
U_{1}\right) \sqcup \pi \left( U_{1}\right) \right) .
\end{equation*}%
Therefore,%
\begin{equation*}
coker\left( \pi \left( U_{1}\cap U_{2}\right) \rightrightarrows \pi \left(
U_{1}\right) \sqcup \pi \left( U_{1}\right) \right) \longrightarrow \pi
\left( U\right) =\pi \left( X\right)
\end{equation*}%
is not injective, and $\pi $ is not a cosheaf.

See also Example \ref{Converging-sequence}.
\end{example}

\begin{example}
\label{Converging-sequence}Let $X$ be the following sequence converging to
zero (together with the limit):%
\begin{equation*}
X=\left\{ 0\right\} \cup \left\{ 1,\frac{1}{2},\frac{1}{3},\frac{1}{4}%
,...\right\} \subseteq \mathbb{R}.
\end{equation*}%
The precosheaves $\pi $ and $\pi _{0}$ from Examples \ref%
{p0-prime-not-cosheaf} and \ref{p0-is-cosheaf} coincide on $X$. Therefore, $%
\pi =\pi _{0}$ is a cosheaf. However, it is \textbf{not} constant. To see
this, just compare the costalks at different points $x\in X$: $\left( \pi
\right) _{\mathbf{Pro}\left( \mathbf{Set}\right) }^{x}=\left\{ \mathbf{pt}%
\right\} $ if $x\neq 0$, while $\left( \pi \right) _{\mathbf{Pro}\left( 
\mathbf{Set}\right) }^{0}$ is a \textbf{non-rudimentary} (Remark \ref%
{Rem-Rudimentary}) pro-set. Consider the constant precosheaf $\mathbf{pt}$.
Due to Corollary \ref{Cor-One-point-constant}, $\left( \mathbf{pt}\right)
_{\#}^{\mathbf{Pro}\left( \mathbf{Set}\right) }%
\simeq%
pro$-$\pi _{0}$. The latter cosheaf does \textbf{not} take values in $%
\mathbf{Set}$,\ therefore, due to Theorem \ref{Our-cosheaves-vs-Bredon}, the
precosheaf $\mathbf{pt}$ is \textbf{not} smooth. Similarly, it can be
proved, that the cosheaf $H_{0}^{S}\left( \_,A\right) $ from Example \ref%
{H0-is-cosheaf-in-AB} is not constant on $X$, while the constant precosheaf $%
A$ is not smooth, because $\left( A\right) _{\#}%
\simeq%
pro$-$H_{0}\left( \_,A\right) $ does not take values in $\mathbf{Ab}$.

It appears that the cosheaf $\left( \mathbf{pt}\right) _{\#}^{\mathbf{Set}}$
is \textbf{rather trivial}. Similarly to Example \ref%
{Ex-Non-smooth-precosheaf}, it can be proved that $\left( \mathbf{pt}\right)
_{\#}^{\mathbf{Set}}\left( U\right) =U$, i.e. the result is as if our space $%
X$ were a \textbf{discrete} space.
\end{example}

\section{\label{Section-Proofs}Proofs of the main results}

\subsection{Proof of Theorem \protect\ref{Th-Main} (\protect\ref%
{Th-Main-K-locally-presentable})}

\begin{proof}
The proof goes through the following three steps:

\begin{enumerate}
\item $\mathbf{CS}\left( X,\mathbf{K}\right) $ is the $\mathbf{K}$-valued
model $\mathbf{Mod(}\mathfrak{S}\mathbf{,K)}$ \cite[Definition 2.55 and 2.60]%
{Adamek-Rosicky-1994-Locally-presentable-categories-MR1294136}, of the
following $\underrightarrow{\lim }$-sketch%
\begin{equation*}
\mathfrak{S}\mathbf{=}\left( \mathbf{C}_{X},\mathfrak{L}=\varnothing ,%
\mathfrak{C},\mathbf{K},\sigma \right) .
\end{equation*}%
$\mathfrak{C}$ is the family of diagrams%
\begin{equation*}
\mathbf{C}_{R}\subseteq \mathbf{C}_{U}\longrightarrow \mathbf{C}_{X}
\end{equation*}%
in $\mathbf{C}_{X}$ ($R$ runs over covering sieves over $U$), where $\sigma
\left( R\right) $ is the corresponding cocone%
\begin{equation*}
\sigma \left( R\right) =\left( \mathbf{C}_{R}\mathbf{\hookrightarrow C}%
_{U}\right) .
\end{equation*}%
A precosheaf%
\begin{equation*}
\mathcal{A}\in \mathbf{pCS}\left( X,\mathbf{K}\right) =\mathbf{K}^{\mathbf{C}%
_{X}}
\end{equation*}%
is a cosheaf iff 
\begin{equation*}
\left( \underrightarrow{\lim }_{\left( V\rightarrow U\right) \in \mathbf{C}%
_{R}}\mathcal{A}\left( V\right) \right) \longrightarrow \mathcal{A}\left(
U\right)
\end{equation*}%
is an isomorphism for all $U\in \mathbf{C}_{X}$ and for all sieves $R\in
Cov\left( U\right) $. Therefore, $\mathcal{A}$ is a cosheaf iff $\mathcal{A}$
maps any cocone $\sigma \left( R\right) $ into a $\underrightarrow{\lim }$%
-cocone in $\mathbf{K}$, i.e. $\mathbf{CS}\left( X,\mathbf{K}\right) $ is
indeed the model $\mathbf{Mod(}\mathfrak{S}\mathbf{,K)}$.

\item Due to \cite[Theorem 2.60]%
{Adamek-Rosicky-1994-Locally-presentable-categories-MR1294136}, the category 
$\mathbf{Mod(}\mathfrak{S}\mathbf{,K)}$ is accessible. Since $\mathfrak{S}$
is a $\underrightarrow{\lim }$-sketch ($\mathfrak{L}=\varnothing $), the
category is cocomplete, therefore locally presentable \cite[Corollary 2.47]%
{Adamek-Rosicky-1994-Locally-presentable-categories-MR1294136}. See also 
\cite[Remark 2.63]%
{Adamek-Rosicky-1994-Locally-presentable-categories-MR1294136}.

\item Due to \cite[Theorem 1.58 and Theorem 1.20]%
{Adamek-Rosicky-1994-Locally-presentable-categories-MR1294136}, the category 
$\mathbf{CS}\left( X,\mathbf{K}\right) $, being locally presentable, is
co-wellpowered, and has a generator. The inclusion%
\begin{equation*}
i_{X,\mathbf{K}}:\mathbf{CS}\left( X,\mathbf{K}\right) \hookrightarrow 
\mathbf{pCS}\left( X,\mathbf{K}\right)
\end{equation*}%
clearly preserves direct limits, therefore, due to the dual to \cite[Freyd's
special adjoint functor theorem, Ch. 0.7]%
{Adamek-Rosicky-1994-Locally-presentable-categories-MR1294136}, $i_{X,%
\mathbf{K}}$ is a left adjoint.
\end{enumerate}
\end{proof}

\subsection{Proof of Theorem \protect\ref{Th-Main} (\protect\ref%
{Th-Main-K-op-locally-presentable}-\protect\ref%
{Th-Main-Pro(K)-K-(co)complete})}

\subsubsection{Proof of Theorem \protect\ref{Th-Main} (\protect\ref%
{Th-Main-K-op-locally-presentable})}

\begin{proof}
Let $\mathbf{L}=\mathbf{K}^{op}$. Since%
\begin{equation*}
\mathbf{pCS}\left( X,\mathbf{K}\right) ^{op}%
\simeq%
\mathbf{pS}\left( X,\mathbf{K}^{op}\right) 
\simeq%
\mathbf{pS}\left( X,\mathbf{L}\right)
\end{equation*}%
and%
\begin{equation*}
\mathbf{CS}\left( X,\mathbf{K}\right) ^{op}%
\simeq%
\mathbf{S}\left( X,\mathbf{K}^{op}\right) 
\simeq%
\mathbf{S}\left( X,\mathbf{L}\right) ,
\end{equation*}%
it is enough to apply Theorem \ref{Th-Sheaves-K-locally-presentable}: $%
\mathbf{S}\left( X,\mathbf{L}\right) \subseteq \mathbf{pS}\left( X,\mathbf{L}%
\right) $ is a reflective subcategory.
\end{proof}

\subsubsection{Proof of Theorem \protect\ref{Th-Main} (\protect\ref%
{Th-Main-K-op-locally-finitely-presentable})}

\begin{proof}
Let again $\mathbf{L}=\mathbf{K}^{op}$. Since%
\begin{equation*}
\mathbf{pCS}\left( X,\mathbf{K}\right) ^{op}%
\simeq%
\mathbf{pS}\left( X,\mathbf{K}^{op}\right) 
\simeq%
\mathbf{pS}\left( X,\mathbf{L}\right)
\end{equation*}%
and%
\begin{equation*}
\mathbf{CS}\left( X,\mathbf{K}\right) ^{op}%
\simeq%
\mathbf{S}\left( X,\mathbf{K}^{op}\right) 
\simeq%
\mathbf{S}\left( X,\mathbf{L}\right) ,
\end{equation*}%
it is enough to apply Theorem \ref{Th-Properties-of-Plus}: $\mathbf{S}\left(
X,\mathbf{L}\right) \subseteq \mathbf{pS}\left( X,\mathbf{L}\right) $ is a
reflective subcategory, and a reflection is given by%
\begin{equation*}
\mathcal{A}\longmapsto \left( \mathcal{A}\right) _{\mathbf{K}^{op}}^{++}.
\end{equation*}%
Therefore, 
\begin{equation*}
\mathbf{CS}\left( X,\mathbf{L}\right) \subseteq \mathbf{pCS}\left( X,\mathbf{%
L}\right)
\end{equation*}%
is a coreflective subcategory, and a coreflection is given by%
\begin{equation*}
\mathcal{A}\longmapsto \left( \mathcal{A}\right) _{\mathbf{K}%
^{op}}^{++}=\left( \mathcal{A}\right) _{++}^{\mathbf{K}}.
\end{equation*}
\end{proof}

\subsubsection{Proof of Theorem \protect\ref{Th-Main} (\protect\ref%
{Th-Main-Pro(K)-K-(co)complete})}

\begin{proof}
\textbf{(a)} Follows from Corollary \ref{Cor-Pro(K)-K-(co)complete-cosheaf}.

\textbf{(b)} Follows from Theorem \ref{Th-Properties-of-Plus-Cosheaves}.
\end{proof}

\subsection{Proof of Theorem \protect\ref{Main-local-iso-pro-K}}

\begin{proof}
~

\begin{enumerate}
\item Apply Proposition \ref{Prop-Cosheafification-is-local-iso} to the
category $\mathbf{Pro}\left( \mathbf{K}\right) $.

\item Let 
\begin{equation*}
\left( f:\mathcal{A\rightarrow B}\right) \in Hom_{\mathbf{CS}\left( X,%
\mathbf{Pro}\left( \mathbf{K}\right) \right) }\left( \mathcal{A},\mathcal{B}%
\right)
\end{equation*}%
be a strong local equivalence between cosheaves, and $G$ run over objects of 
$\mathbf{K}$. Due to Proposition \ref{Prop-Strong-local-equivalence} and \ref%
{Prop-Hom(K,G)}, 
\begin{equation*}
Hom_{\mathbf{Pro}\left( \mathbf{K}\right) }\left( f,G\right) :Hom_{\mathbf{%
Pro}\left( \mathbf{K}\right) }\left( \mathcal{B},G\right) \longrightarrow
Hom_{\mathbf{Pro}\left( \mathbf{K}\right) }\left( \mathcal{A},G\right)
\end{equation*}%
is a local isomorphism between sheaves of sets. It is well-known (see, for
example, \cite[Ch. I.1]{Bredon-Book-MR1481706}) that a local isomorphism
between sheaves of sets is an isomorphism, therefore $Hom_{\mathbf{Pro}%
\left( \mathbf{K}\right) }\left( f,G\right) $ is an isomorphism for any $%
G\in \mathbf{K}$. $f$ is then an isomorphism because $\left( \mathbf{Pro}%
\left( \mathbf{K}\right) \right) ^{op}$ is a full subcategory of $\mathbf{Set%
}^{\mathbf{K}}$.

\item It is assumed that the composition%
\begin{equation*}
\mathcal{B}\longrightarrow \left( \mathcal{A}\right) _{\#}^{\mathbf{Pro}%
\left( \mathbf{K}\right) }\longrightarrow \mathcal{A}
\end{equation*}%
of two morphisms is a strong local isomorphism. The second morphism is a
strong local isomorphism, too. Therefore the first morphism is a strong
local isomorphism between cosheaves, thus an isomorphism.
\end{enumerate}
\end{proof}

\subsection{Proof of Theorem \protect\ref{Our-cosheaves-vs-Bredon}}

\begin{proof}
\textbf{(a)} Follows from Theorem \ref{Th-Main} (\ref%
{Th-Main-Pro(K)-K-(co)complete-cosheaf}).

\textbf{(b)} If $\left( \mathcal{A}\right) _{\#}^{\mathbf{Pro}\left( \mathbf{%
K}\right) }$ takes values in $\mathbf{K}$, consider the diagram%
\begin{equation*}
\mathcal{A}\overset{\mathbf{1}_{\mathcal{A}}}{\longrightarrow }\mathcal{A}%
\longleftarrow \left( \mathcal{A}\right) _{\#}^{\mathbf{Pro}\left( \mathbf{K}%
\right) }
\end{equation*}%
of strong local equivalences in $\mathbf{pCS}\left( X,\mathbf{K}\right) $.
The diagram guarantees that $\mathcal{A}$ is smooth.

Conversely, assume that $\mathcal{A}$ is smooth. There exists either a
diagram%
\begin{equation*}
\mathcal{A}\longrightarrow \mathcal{B}\longleftarrow \mathcal{C}
\end{equation*}%
or a diagram 
\begin{equation*}
\mathcal{A\longleftarrow B}^{\prime }\longrightarrow \mathcal{C}
\end{equation*}%
of strong local equivalences with a \textbf{cosheaf} $\mathcal{C}\in \mathbf{%
CS}\left( X,\mathbf{K}\right) $. In the first case, the diagram%
\begin{equation*}
\left( \mathcal{A}\right) _{\#}^{\mathbf{Pro}\left( \mathbf{K}\right)
}\longrightarrow \left( \mathcal{B}\right) _{\#}^{\mathbf{Pro}\left( \mathbf{%
K}\right) }\longleftarrow \mathcal{C}
\end{equation*}%
consists of strong local isomorphisms (therefore isomorphisms, due to
Theorem \ref{Main-local-iso-pro-K}) between cosheaves. It follows that $%
\left( \mathcal{A}\right) _{\#}^{\mathbf{Pro}\left( \mathbf{K}\right) }$
takes values in $\mathbf{K}$, since $\mathcal{C}$ does. In the second case,
the diagram%
\begin{equation*}
\left( \mathcal{A}\right) _{\#}^{\mathbf{Pro}\left( \mathbf{K}\right)
}\longleftarrow \left( \mathcal{B}\right) _{\#}^{\mathbf{Pro}\left( \mathbf{K%
}\right) }\longrightarrow \left( \mathcal{C}\right) _{\#}^{\mathbf{Pro}%
\left( \mathbf{K}\right) }\longrightarrow \mathcal{C}
\end{equation*}%
consists of strong local isomorphisms between cosheaves. It follows again
that $\left( \mathcal{A}\right) _{\#}^{\mathbf{Pro}\left( \mathbf{K}\right)
} $ takes values in $\mathbf{K}$, since $\mathcal{C}$ does.
\end{proof}

\subsection{Proof of Theorem \protect\ref{Main-constant}}

\begin{proposition}
\label{Dual-pro-p0}\label{Dual-pro-H0}Let $\mathbf{K}$ be a cocomplete
category. For any $G,H\in \mathbf{K}$ and any topological space $U$, the set%
\begin{equation*}
Hom_{\mathbf{Pro}\left( \mathbf{K}\right) }\left( G\otimes _{\mathbf{Set}}pro%
\text{-}\pi _{0}\left( U\right) ,H\right)
\end{equation*}%
is naturally (on $G$, $H$ and $U$) isomorphic to the set $Hom_{\mathbf{K}%
}\left( G,H\right) ^{U}$ of continuous functions $U\rightarrow Hom_{\mathbf{K%
}}\left( G,H\right) $ where $Hom_{\mathbf{K}}\left( G,H\right) $ is supplied
with the discrete topology.
\end{proposition}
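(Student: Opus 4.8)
The plan is to compute the left-hand set by unwinding all the definitions and reducing to the defining property of an $H\left( \mathbf{Pol}\right) $-expansion. First I would fix an $H\left( \mathbf{Pol}\right) $-expansion $U\rightarrow \left( Y_{j}\right) _{j\in \mathbf{J}}$, so that by Definition \ref{Pro-homotopy-groups} we have $pro\text{-}\pi _{0}\left( U\right) =\left( \pi _{0}\left( Y_{j}\right) \right) _{j\in \mathbf{J}}$, and hence by Definition \ref{Def-Pro(Set)-Ten-Set-K},
\begin{equation*}
G\otimes _{\mathbf{Set}}pro\text{-}\pi _{0}\left( U\right) =\left( G\otimes _{\mathbf{Set}}\pi _{0}\left( Y_{j}\right) \right) _{j\in \mathbf{J}}\in \mathbf{Pro}\left( \mathbf{K}\right) .
\end{equation*}
Since the target $H$ is a rudimentary pro-object (Remark \ref{Rem-Rudimentary}), the outer limit in the Hom-formula of Definition \ref{Def-Pro-C} is trivial, so
\begin{equation*}
Hom_{\mathbf{Pro}\left( \mathbf{K}\right) }\left( G\otimes _{\mathbf{Set}}pro\text{-}\pi _{0}\left( U\right) ,H\right) \simeq \underrightarrow{\lim }_{j\in \mathbf{J}}Hom_{\mathbf{K}}\left( G\otimes _{\mathbf{Set}}\pi _{0}\left( Y_{j}\right) ,H\right) .
\end{equation*}
Writing $S:=Hom_{\mathbf{K}}\left( G,H\right) $ and using $G\otimes _{\mathbf{Set}}Z=\dcoprod_{Z}G$ together with the universal property of the coproduct, each term rewrites as $Hom_{\mathbf{K}}\left( \dcoprod_{\pi _{0}\left( Y_{j}\right) }G,H\right) \simeq \dprod_{\pi _{0}\left( Y_{j}\right) }S=Hom_{\mathbf{Set}}\left( \pi _{0}\left( Y_{j}\right) ,S\right) $.

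Next I would identify these function-sets with homotopy classes of maps into the discrete space $S$. A polyhedron $Y_{j}$ is locally path-connected, so a continuous map $Y_{j}\rightarrow S$ into a discrete space is constant on each path-component; hence $Hom_{\mathbf{Set}}\left( \pi _{0}\left( Y_{j}\right) ,S\right) \simeq \left[ Y_{j},S\right] $, and likewise $Hom_{\mathbf{Top}}\left( U,S\right) \simeq \left[ U,S\right] $ since a homotopy into a discrete space is stationary. A discrete set, viewed as a $0$-dimensional simplicial complex, is a polyhedron, so I may take $P=S$ in Definition \ref{HTOP-extension}. This yields the crucial bijection
\begin{equation*}
\underrightarrow{\lim }_{j\in \mathbf{J}}\left[ Y_{j},S\right] \overset{\sim }{\longrightarrow }\left[ U,S\right] =Hom_{\mathbf{Top}}\left( U,S\right) =S^{U},
\end{equation*}
the last set being precisely the continuous functions $U\rightarrow Hom_{\mathbf{K}}\left( G,H\right) $ with $Hom_{\mathbf{K}}\left( G,H\right) $ discrete. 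Chaining the displayed isomorphisms produces the asserted bijection.

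Finally I would verify naturality in $G$, $H$, and $U$. Naturality in $G$ and $H$ is inherited from the naturality of the coproduct--product adjunction and of the Hom-formula in $\mathbf{Pro}\left( \mathbf{K}\right) $, while naturality in $U$ follows because $pro\text{-}\pi _{0}$ is a functor into $\mathbf{Pro}\left( \mathbf{Set}\right) $ (independent, up to canonical isomorphism, of the chosen expansion) and because the expansion bijection of Definition \ref{HTOP-extension} is natural. The main obstacle is the combined step $Hom_{\mathbf{Set}}\left( \pi _{0}\left( Y_{j}\right) ,S\right) \simeq \left[ Y_{j},S\right] $ followed by the application of the expansion property: one must be sure that an \emph{arbitrary} (possibly infinite) discrete set $S$ qualifies as a polyhedron $P$ in Definition \ref{HTOP-extension}, and that passing to the filtered colimit over $\mathbf{J}$ commutes with all the identifications. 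Both facts hold, but this is exactly where the local path-connectedness of polyhedra and the shape-theoretic expansion axiom do the real work.
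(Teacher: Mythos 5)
Your proposal is correct and follows essentially the same route as the paper's own proof: fix a polyhedral expansion, reduce the pro-Hom to a filtered colimit of $Hom_{\mathbf{K}}\left( G\otimes _{\mathbf{Set}}\pi _{0}\left( Y_{j}\right) ,H\right) \simeq Hom_{\mathbf{Set}}\left( \pi _{0}\left( Y_{j}\right) ,S\right) $, identify these with (homotopy classes of) continuous maps into the discrete space $S$ using local connectedness of polyhedra, and invoke the expansion property with $P=S$. Your explicit remark that a discrete set is a $0$-dimensional polyhedron makes precise a point the paper leaves implicit, but the argument is otherwise identical.
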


\begin{proof}
Let $U\rightarrow \left( Y_{j}\right) _{j\in \mathbf{J}}$ be a polyhedral
expansion. Then%
\begin{equation*}
G\otimes _{\mathbf{Set}}pro\text{-}\pi _{0}\left( U\right) =\left( G\otimes
_{\mathbf{Set}}\pi _{0}\left( Y_{j}\right) \right) _{j\in \mathbf{J}}.
\end{equation*}%
Therefore,%
\begin{eqnarray*}
&&Hom_{\mathbf{Pro}\left( \mathbf{K}\right) }\left( G\otimes _{\mathbf{Set}%
}pro\text{-}\pi _{0}\left( U\right) ,H\right) 
\simeq%
\underrightarrow{\lim }_{j\in \mathbf{J}}Hom_{\mathbf{K}}\left( G\otimes _{%
\mathbf{Set}}\pi _{0}\left( Y_{j}\right) ,H\right) 
\simeq
\\
&&%
\simeq%
\underrightarrow{\lim }_{j\in \mathbf{J}}Hom_{\mathbf{Set}}\left( \pi
_{0}\left( Y_{j}\right) ,Hom_{\mathbf{K}}\left( G,H\right) \right) 
\simeq%
\underrightarrow{\lim }_{j\in \mathbf{J}}Hom_{\mathbf{Top}}\left( Y_{j},Hom_{%
\mathbf{K}}\left( G,H\right) \right) 
\simeq
\\
&&%
\simeq%
\underrightarrow{\lim }_{j\in \mathbf{J}}Hom_{H\left( \mathbf{Top}\right)
}\left( Y_{j},Hom_{\mathbf{K}}\left( G,H\right) \right) 
\simeq%
Hom_{H\left( \mathbf{Top}\right) }\left( U,Hom_{\mathbf{K}}\left( G,H\right)
\right) 
\simeq
\\
&&%
\simeq%
Hom_{\mathbf{Top}}\left( U,Hom_{\mathbf{K}}\left( G,H\right) \right) 
\simeq%
Hom_{\mathbf{K}}\left( G,H\right) ^{U}.
\end{eqnarray*}

The bijections 
\begin{eqnarray*}
&&Hom_{\mathbf{Top}}\left( Y_{j},Hom_{\mathbf{K}}\left( G,H\right) \right) 
\simeq%
Hom_{H\left( \mathbf{Top}\right) }\left( Y_{j},Hom_{\mathbf{K}}\left(
G,H\right) \right) , \\
&&Hom_{H\left( \mathbf{Top}\right) }\left( U,Hom_{\mathbf{K}}\left(
G,H\right) \right) 
\simeq%
Hom_{\mathbf{Top}}\left( U,Hom_{\mathbf{K}}\left( G,H\right) \right) ,
\end{eqnarray*}%
above are due to the fact that $Hom_{\mathbf{K}}\left( G,H\right) $ is
discrete, therefore each homotopy class of mappings consists of a single
mapping. The bijection%
\begin{equation*}
\underrightarrow{\lim }_{j\in \mathbf{J}}Hom_{H\left( \mathbf{Top}\right)
}\left( Y_{j},Hom_{\mathbf{K}}\left( G,H\right) \right) 
\simeq%
Hom_{H\left( \mathbf{Top}\right) }\left( U,Hom_{\mathbf{K}}\left( G,H\right)
\right)
\end{equation*}%
follows from the definition of an expansion. Since the spaces $Y_{j}$, being
polyhedra, are locally connected, and $Hom_{\mathbf{K}}\left( G,H\right) $
is discrete, the bijections%
\begin{equation*}
Hom_{\mathbf{Set}}\left( \pi _{0}\left( Y_{j}\right) ,Hom_{\mathbf{K}}\left(
G,H\right) \right) 
\simeq%
Hom_{\mathbf{Top}}\left( Y_{j},Hom_{\mathbf{K}}\left( G,H\right) \right)
\end{equation*}%
follow easily.
\end{proof}

\subsubsection{Proof of the theorem}

\begin{proof}
~

\begin{enumerate}
\item Due to Proposition \ref{Prop-Hom(K,G)} and \ref{Prop-Hom(Pro(K),G)},
it is enough to prove that, for any $H\in \mathbf{K}$, the presheaf of sets%
\begin{equation*}
\mathcal{B}%
{:=}%
Hom_{\mathbf{Pro}\left( \mathbf{K}\right) }\left( G\otimes _{\mathbf{Set}}pro%
\text{-}\pi _{0},H\right)
\end{equation*}%
is a sheaf, and that $\mathcal{C}^{\#}%
\simeq%
\mathcal{B}$ for the constant presheaf of sets%
\begin{equation*}
\mathcal{C}%
{:=}%
Hom_{\mathbf{Pro}\left( \mathbf{K}\right) }\left( G,H\right) =Hom_{\mathbf{K}%
}\left( G,H\right) .
\end{equation*}%
Due to Proposition \ref{Dual-pro-p0}, for any open subset $U$ of $X$,%
\begin{equation*}
\mathcal{B}\left( U\right) =Hom_{\mathbf{Pro}\left( \mathbf{K}\right)
}\left( G\otimes _{\mathbf{Set}}pro\text{-}\pi _{0},H\right) 
\simeq%
Hom_{\mathbf{K}}\left( G,H\right) ^{U}.
\end{equation*}%
For any open covering $\left\{ U_{i}\rightarrow U\right\} _{i\in I}$ the
space $U$ is isomorphic in the category $\mathbf{Top}$ to the cokernel%
\begin{equation*}
coker\left( \dcoprod\limits_{i,j\in I}U_{i}\cap U_{j}\rightrightarrows
\dcoprod\limits_{i\in I}U_{i}\right) ,
\end{equation*}%
therefore%
\begin{eqnarray*}
\mathcal{B}\left( U\right) &=&Hom_{\mathbf{K}}\left( G,H\right) ^{U}%
\simeq%
\ker \left( \dprod\limits_{i\in I}Hom_{\mathbf{K}}\left( G,H\right)
^{U_{i}}\rightrightarrows \dprod\limits_{i,j\in I}Hom_{\mathbf{K}}\left(
G,H\right) ^{U_{i}\cap U_{j}}\right) = \\
&&%
\simeq%
\ker \left( \dprod\limits_{i\in I}\mathcal{B}\left( U_{i}\right)
\rightrightarrows \dprod\limits_{i,j\in I}\mathcal{B}\left( U_{i}\cap
U_{j}\right) \right) .
\end{eqnarray*}%
Therefore, $\mathcal{B}$ is a sheaf. To prove that $\mathcal{C}^{\#}%
\simeq%
\mathcal{B}$, it is enough, due to Theorem \ref{Main-local-iso-pro-K} and
Proposition \ref{Prop-Strong-local-equivalence}, to prove that $\mathcal{%
C\rightarrow B}$ is a $\mathbf{Set}$-local isomorphism of presheaves. The
stalks $\mathcal{C}_{x}=Hom_{\mathbf{K}}\left( G,H\right) $ are constant.
Let $x\in X$, and let $J\left( x\right) $ be the set of open neighborhoods
of $x$. Since%
\begin{equation*}
\mathcal{B}_{x}=\underrightarrow{\lim }_{U\in J\left( x\right) }\mathcal{B}%
\left( U\right) 
\simeq%
\underrightarrow{\lim }_{U\in J\left( x\right) }Hom_{\mathbf{K}}\left(
G,H\right) ^{U}%
\simeq%
Hom_{\mathbf{K}}\left( G,H\right) 
\simeq%
\mathcal{C}_{x},
\end{equation*}%
the morphism $\mathcal{C\rightarrow B}$ is indeed a local isomorphism.

\item If $\mathbf{K}=\mathbf{Set}$, and $G\in \mathbf{Set}$, then $G\otimes
_{\mathbf{Set}}pro$-$\pi _{0}%
\simeq%
G\times pro$-$\pi _{0}$.

\item If $\mathbf{K}=\mathbf{Ab}$, and $G\in \mathbf{Ab}$, then $G\otimes _{%
\mathbf{Set}}pro$-$\pi _{0}%
\simeq%
pro$-$H_{0}\left( \_,G\right) $. Indeed, let $U\rightarrow \left(
Y_{j}\right) _{j\in \mathbf{J}}$ be a polyhedral expansion. Since the
polyhedra $Y_{j}$ are locally connected,%
\begin{equation*}
H_{0}\left( Y_{j},G\right) 
\simeq%
Hom_{\mathbf{Set}}\left( \pi _{0}Y_{j},G\right) 
\simeq%
\dcoprod\limits_{\pi _{0}Y_{j}}G%
\simeq%
G\otimes _{\mathbf{Set}}\pi _{0}\left( Y_{j}\right) .
\end{equation*}
\end{enumerate}
\end{proof}

\bibliographystyle{alpha}
\bibliography{Cosheaves}

\end{document}